\DeclareMathAlphabet{\mathscr}{OT1}{pzc}{m}{it} 
\numberwithin{equation}{section}
\setlist{labelindent=\parindent,leftmargin=*}
\newtheorem{theorem}{Theorem}[section]
\newtheorem{notation}[theorem]{Notation}
\newtheorem{lemma}[theorem]{Lemma}
\newtheorem{proposition}[theorem]{Proposition}
\newtheorem{corollary}[theorem]{Corollary}
\newtheorem{definition}[theorem]{Definition}
\newtheorem{hypothesis}[theorem]{Hypothesis}
\newtheorem{remark}[theorem]{Remark}
\newsavebox\myboxA
\newsavebox\myboxB
\newlength\mylenA
\newcommand*\xoverline[2][0.75]{%
    \sbox{\myboxA}{$\m@th#2$}%
    \setbox\myboxB\null
    \ht\myboxB=\ht\myboxA%
    \dp\myboxB=\dp\myboxA%
    \wd\myboxB=#1\wd\myboxA
    \sbox\myboxB{$\m@th\overline{\copy\myboxB}$}
    \setlength\mylenA{\the\wd\myboxA}
    \addtolength\mylenA{-\the\wd\myboxB}%
    \ifdim\wd\myboxB<\wd\myboxA%
       \rlap{\hskip 0.5\mylenA\usebox\myboxB}{\usebox\myboxA}%
    \else
        \hskip -0.5\mylenA\rlap{\usebox\myboxA}{\hskip 0.5\mylenA\usebox\myboxB}%
    \fi}
\title{BSDEs with no driving martingale, Markov processes and 
associated Pseudo Partial Differential Equations. Part II: Decoupled mild solutions and Examples}
\author{
  Adrien BARRASSO \thanks{Université d'Évry Val d'Essonne, 
  Laboratoire de Mathématiques et Modélisation,
23 Bd. de France, 
91037 Évry Cedex,  
 E-mail: \sf adrien.barrasso@univ-evry.fr}  
	\qquad\quad
	Francesco RUSSO\thanks{ENSTA Paris, Institut Polytechnique
          de Paris, Unit\'e de Math\'ematiques appliqu\'ees, 828, boulevard des Mar\'echaux, F-91120 Palaiseau, France. E-mail:
          \sf russo@math.univ-paris13.fr}}
\date{March 2021}
\begin{document}
	\maketitle
	
{\bf Abstract.} Let 
$(\mathbbm{P}^{s,x})_{(s,x)\in[0,T]\times E}$ be a family of probability measures,
 where $E$ is a Polish space,
defined on the canonical probability space ${\mathbbm D}([0,T],E)$
of $E$-valued c\`adl\`ag functions. We suppose that a martingale problem with 
respect to a time-inhomogeneous generator $a$  is well-posed.
We  consider also an associated  semilinear {\it Pseudo-PDE}
for which we introduce a notion of so called {\it decoupled mild} solution
 and study the equivalence with the
notion of martingale solution introduced in a companion paper.
We also investigate well-posedness for decoupled mild solutions and their
relations with a special class of BSDEs without driving martingale.
The notion of decoupled mild solution is a good candidate to replace the
notion of viscosity solution which is not always suitable
when the map $a$ is not a PDE operator.

	\bigskip
	{\bf MSC 2020} Classification. 
	60H30; 60H10; 35S05; 60J35; 60J60; 60J99.
	
	\bigskip
	{\bf KEY WORDS AND PHRASES.} Martingale problem; pseudo-PDE; 
	Markov processes; backward stochastic differential equation; decoupled mild solutions.
	
	\section{Introduction}

	The framework of this paper is the canonical space 
 $\Omega = \mathbbm{D}([0,T],E)$ of c\`adl\`ag functions defined on 
the interval $[0,T]$
with values in a Polish space $E$. This space will be equipped 
with a family $(\mathbbm{P}^{s,x})_{(s,x)\in[0,T]\times E}$ of probability measures 
indexed by an initial time $s\in[0,T]$ and a starting point $x \in E$.
For each $(s,x)\in[0,T]\times E$, $\mathbbm{P}^{s,x}$ corresponds to
 the law of an underlying forward Markov process  with time index $[0,T]$, taking values in the  Polish state space $E$  which is
	characterized as the solution of a  well-posed martingale problem
	related to a certain operator $(\mathcal{D}(a),a)$.
In the companion paper 
\cite{paper1preprint}
we have introduced a semilinear equation generated by $(\mathcal{D}(a),a)$, called 
{\it Pseudo-PDE} of the type 
	\begin{equation}\label{PDEIntro}
	\left\{
	\begin{array}{rccc}
	a(u) + f\left(\cdot,\cdot,u,\Gamma(u)^{\frac{1}{2}}\right)&=&0& \text{ on } [0,T]\times E   \\
	u(T,\cdot)&=&g,& 
	\end{array}\right.
	\end{equation}
	where $\Gamma(u)=a(u^2)-2ua(u)$ is a potential theory operator called
	the {\it carr\'e du champs operator}. 
A classical solution of \eqref{PDEIntro} is defined as an element of
 $\mathcal{D}(a)$ verifying  \eqref{PDEIntro}. In 
\cite{paper1preprint}
we have also defined the notion of {\it martingale solution}
of \eqref{PDEIntro}, see  Definition \ref{D417}. 
A function $u$ is a martingale solution if \eqref{PDEIntro}
holds replacing the map $a$ (resp. $\Gamma$) with an extended
operator $\mathfrak{a}$ (resp. $\mathfrak{G}$) which is introduced 
in Definition \ref{extended} (resp. \ref{extendedgamma}). 
The martingale solution extends the (analytical) notion
of classical solution, however it is a probabilistic concept.
The objectives of the present paper are essentially three.
\begin{enumerate}
\item To introduce an alternative notion of (this time analytical) solution, 
that we call {\it decoupled mild}, 
since it makes use of the time-dependent transition kernel 
associated with $a$. 
This new type of solution will be shown to be  essentially equivalent to
the  martingale one. 
\item To show existence and uniqueness of decoupled mild solutions.
\item To emphasize the link with solutions of forward BSDEs (FBSDEs)
without driving martingale introduced in 
\cite{paper1preprint}.
\end{enumerate}
The aforementioned  FBSDEs are of the  form
\begin{equation}\label{BSDEIntro}
	Y^{s,x}_t = g(X_T) + \int_t^T f\left(r,X_r,Y^{s,x}_r,\sqrt{\frac{d\langle M^{s,x}\rangle_r}{dr}}\right)dr  -(M^{s,x}_T - M^{s,x}_t),
\end{equation}
in  a stochastic basis $\left(\Omega,\mathcal{F}^{s,x},(\mathcal{F}^{s,x}_t)_{t\in[0,T]},\mathbbm{P}^{s,x}\right)$ which depends on $(s,x)$.
Under suitable conditions, for fixed $(s,x)$, the solution of
this FBSDE is a couple $(Y^{s,x},M^{s,x})$
of c\`adl\`ag stochastic processes where $M^{s,x}$ is a martingale. 
This was introduced and studied in a more general setting in \cite{paper1preprint}, see \cite{qianBSDEs} for a similar formulation.

We refer to the introduction  and reference list of previous paper 
for an extensive description of contributions to non-Brownian type BSDEs.
The classical forward BSDE, which is driven by a Brownian motion
is of the form 
\begin{equation}\label{BrownianBSDE}
\left\{\begin{array}{rcl}
X^{s,x}_t &=& x+ \int_s^t \mu(r,X^{s,x}_r)dr +\int_s^t \sigma(r,X^{s,x}_r)dB_r\\
Y^{s,x}_t &=& g(X^{s,x}_T) + \int_t^T f\left(r,X^{s,x}_r,Y^{s,x}_r,Z^{s,x}_r\right)dr  -\int_t^T Z^{s,x}_rdB_r,
\end{array}\right.
\end{equation}
where $B$ is a Brownian motion. 
Existence and uniqueness for \eqref{BrownianBSDE}
 was established first
supposing mainly Lipschitz conditions on $f$ with respect to the third
and fourth variable. $\mu$ and $\sigma$ were also assumed to be 
Lipschitz (with respect to $x$) and to have linear growth.
In the sequel those conditions were
considerably relaxed, see \cite{PardouxRascanu}
 and references therein.
	This is a particular case of a more general (non-Markovian) Brownian BSDE 
introduced  in 1990 by E. Pardoux and
	S. Peng in \cite{parpen90}, after an early work
	of J.M. Bismut in 1973 in \cite{bismut}.

Equation \eqref{BrownianBSDE} is a probabilistic representation of 
a  semilinear partial differential
equation of parabolic type with terminal condition:
\begin{equation}\label{PDEparabolique}
\left\{
\begin{array}{l}
\partial_tu + \frac{1}{2}\underset{i,j\leq d}{\sum} (\sigma\sigma^\intercal)_{i,j}\partial^2_{x_ix_j}u + \underset{i\leq d}{\sum} \mu_i\partial_{x_i}u + f(\cdot,\cdot,u,\sigma\nabla u)=0\quad \text{ on } [0,T[\times\mathbbm{R}^d \\
u(T,\cdot) = g. 
\end{array}\right.
\end{equation}

Given, for every $(s,x)$, a solution $(Y^{s,x}, Z^{s,x})$ of the FBSDE \eqref{BrownianBSDE},
 under some continuity assumptions on the coefficients, see e.g. \cite{pardoux1992backward}, it was proved that
the function $u(s,x):= Y^{s,x}_s$ is 
a viscosity solution of  \eqref{PDEparabolique},
see also  \cite{peng1991probabilistic, 
pardoux1992backward, peng1991probabilistic, el1997backward}, for related work.

We prolong this idea in a general case where the FBSDE is
\eqref{BSDEIntro} with solution $(Y^{s,x}, M^{s,x})$.
In that case $u(s,x):= Y^{s,x}_s$ will be the decoupled mild solution of
\eqref{PDEIntro}, see Theorem \ref{Representation};
 in that general context the decoupled mild solution replaces
the one of viscosity, for reasons that we will explain below.
One celebrated problem in the case of Brownian FBSDEs is the characterization
of $Z^{s,x}$ through a deterministic function $v$.
This is what we will call the {\it identification  problem}.
In general the link between
$v$ and $u$ is not always analytically established, excepted when $u$
has some suitable differentiability property, see e.g. 
\cite{barles1997sde}: 
in that case  $v$ is closely related to the gradient of $u$.
In our case, the notion of decoupled mild solution allows to identify $(u,v)$
as the analytical solution of a deterministic problem.
  In the literature, the notion of mild 
	solution of  PDEs  was used  in finite dimension in 
\cite{BSDEmildPardouxBally},
	where the authors tackled diffusion operators generating symmetric Dirichlet forms and associated Markov processes thanks to the theory of Fukushima Dirichlet forms, see e.g. \cite{fuosta}. A partial extension to
the case of non-symmetric Dirichlet forms is performed in \cite{klimsiak}.
	Infinite dimensional setups were considered  for example  
	in  \cite{fuhrman_tessitore02} where an infinite
	dimensional BSDE  could produce the mild solution  of a PDE on a 
	Hilbert space. 

Let $B$ be a functional Banach space $(B,\|\cdot\|)$
of real Borel functions defined on $E$ 
 and 
 $A$ be an unbounded operator on  $(B,\|\cdot\|)$. 
In the theory of evolution equations one often considers systems of
the type  
\begin{equation}\label{linear}
\left\{\begin{array}{rcl}
\partial_tu+Au  &=& l \text{ on }[0,T]\times\mathbbm{R}^d\\
u(T,\cdot) &=& g,
\end{array}\right.
\end{equation}
where 
 $l:[0,T] \times \mathbbm{R}^d \rightarrow {\mathbbm R}$
and $g:\mathbbm{R}^d \rightarrow {\mathbbm R}$ are 
such that $l(t,\cdot)$ and $g$ belong to $B$ for every
$t \in [0,T]$. 
 The idea of mild solutions 
consists to consider $A$ (when possible) as the infinitesimal generator
of a semigroup of operators
$(P_t)_{t\geq 0}$ on $(B,\|\cdot\|)$,  
in the following sense. There is $\mathcal{D}(A) \subset B$,
 a dense subset on which
 $A f=\underset{t\rightarrow 0^+}{\text{lim }}\frac{1}{t}(P_tf-f)$.
In particular one may think of $(P_t)_{t\geq 0}$ as the heat kernel semi-group and
$A$ as $\frac{1}{2}\Delta $ which is the infinitesimal generator of the Brownian motion. 
The approach of mild solutions is also very popular  in the framework of 
stochastic PDEs see e. g. 
\cite{daprato_zabczyk14}.
When $A$ is a local operator, one solution (in the sense of distributions, or in the sense of 
evaluation against test functions)
to the linear evolution problem with terminal condition
\eqref{linear} is
the so called {\it mild solution} 
 \begin{equation}\label{mildlinear} 
 u(s,\cdot)= P_{T-s}[g] - \int_s^T P_{r-s}[l(r,\cdot)]dr.
 \end{equation}
If $ l$ is explicitly a function of $u$ then
\eqref{mildlinear} becomes itself an equation
and a mild solution would consist in finding a fixed
point of \eqref{mildlinear}.
Let us now suppose the existence of a  
 map
$S: \mathcal{D}(S) \subset B \rightarrow B$, typically  $S$ being the gradient,
when $(P_t)$ is the heat kernel semigroup.
The natural question is what would be a natural replacement for
a {\it mild solution}
for
 \begin{equation}\label{linearS}
 \left\{\begin{array}{rcl}
 \partial_tu+Au  &=& - f(s,\cdot,u,Su) \text{ on }[0,T]\times\mathbbm{R}^d\\
 u(T,\cdot) &=& g.
 \end{array}\right.
 \end{equation}
If the domain of $S$ is $B$, then it is not
difficult to extend the notion of mild solution to this case.
One novelty of our approach consists is considering
the case of solutions $u:[0,T] \times \mathbbm{R}^d \rightarrow \mathbbm{R}$
for which $Su(t,\cdot)$ is not defined.
\begin{enumerate} 
\item Suppose  one expects a solution not to be classical,
i.e. such that $u(r,\cdot)$ should 
not belong to the domain of $\mathcal{D}(A)$ but to be
in the domain of $S$.
In the case of usual PDEs,
 one thinks of  possible solutions which are not $C^{1,2}$
but admitting a gradient, typically viscosity solutions which are 
differentiable in $x$.
In that case the usual idea of
 mild solutions theory applies to equations of type
\eqref{linearS}.
In this setup, inspired by \eqref{mildlinear} a mild solution of the equation is naturally
defined as a solution of the integral equation 
\begin{equation}\label{E39}
u(s,\cdot)= P_{T-s}[g] + \int_s^T P_{r-s}[f(r,\cdot,u(r,\cdot),Su(r,\cdot))]dr.
\end{equation}
\item However, there may be reasons for which
the candidate solution $u$ is such that
$u(t,\cdot)$  does not even belong to $\mathcal{D}(S)$.
In the case of PDEs it is often the case for viscosity solutions of  PDEs which do not admit a gradient. 
In that case the idea is to replace \eqref{E39}
with 
\begin{equation}
u(s,\cdot)= P_{T-s}[g] + \int_s^T P_{r-s}[f(r,\cdot,u(r,\cdot),v(r,\cdot))]dr.
\end{equation}
and to add a second equality which expresses in a {\it mild} form
the equality $v(r,\cdot) = Su(r,\cdot)$.
\end{enumerate}
We will work out previous methodology for 
the $Pseudo-PDE(f,g)$. In that case
 $S$ will be given by the mapping 
$u\longmapsto \Gamma(u)^{\frac{1}{2}}$.
If  $A=\frac{1}{2}\Delta$ for instance one would have $\Gamma(u)^{\frac{1}{2}}=\|\nabla u\|$.
For pedagogical purposes, one can first consider an operator $a$ of
 type $\partial_t+A$ when $A$ is the generator
of a Markovian (time-homogeneous)   semigroup. 
 In this case, 
 \begin{eqnarray*}
 \Gamma(u) &=& \partial_t(u^2)+A(u^2)-2u\partial_tu-2uAu \\
 &=&  A(u^2) - 2uAu.
 \end{eqnarray*}
 Equation 
\begin{equation}\label{Emilddiffusion}
\partial_tu+Au + f\left(\cdot,\cdot,u,\Gamma(u)^{\frac{1}{2}}\right)=0,
\end{equation} 
could therefore be decoupled into the system
\begin{equation}
\left\{
\begin{array}{l}
\partial_tu+Au + f(\cdot,\cdot,u,v)= 0 \\
v^2=\partial_t(u^2)+A(u^2)-2u(\partial_tu+Au),
\end{array}\right.
\end{equation}
which furthermore can be expressed as
\begin{equation}
\left\{
\begin{array}{rcl}
\partial_tu+Au &=& - f(\cdot,\cdot,u,v)\\
\partial_t(u^2)+A(u^2)&=&v^2-2uf(\cdot,\cdot,u,v).
\end{array}\right.
\end{equation}
Taking into account the existing notions of mild solution \eqref{mildlinear} (resp. \eqref{E39}), for corresponding equations \eqref{linear} (resp. \eqref{linearS}), one is naturally tempted to define a decoupled mild solution of \eqref{PDEIntro} as a function $u$ for which there exist $v\geq 0$ such that 
\begin{equation}
\left\{
\begin{array}{rcl}
u(s,\cdot)&=& P_{T-s}[g] + \int_s^T P_{r-s}[f(r,\cdot,u(r,\cdot),v(r,\cdot))]dr\\
u^2(s,\cdot)&=& P_{T-s}[g^2] - \int_s^T P_{r-s}[v^2(r,\cdot)-2u(r,\cdot)f(r,\cdot,u(r,\cdot),v(r,\cdot))]dr.
\end{array}\right.
\end{equation}
As we mentioned before, our
 approach is alternative to a possible notion of viscosity
solution for the $Pseudo-PDE(f,g)$. That notion will be the object  of 
a subsequent paper, at least in the case when the driver do not depend
on the last variable. In the general case the notion of viscosity solution
does not fit well because of lack of suitable comparison theorems.
On the other hand, even in the recent literature (see \cite{barles1997backward}) in order to show existence of viscosity solutions
specific conditions exist on the driver.
In our opinion our approach of decoupled mild solutions for $Pseudo-PDE(f,g)$
constitutes an interesting novelty even in the case of 
semilinear parabolic PDEs.

The main contributions  of the paper are essentially the following.
In Section \ref{mild}, Definition \ref{mildsoluv} introduces our notion of decoupled mild solution of \eqref{PDEIntro} in the general setup.  In section Section \ref{mart-mild}, Proposition \ref{MartingaleImpliesMild} states that under a square integrability type condition, every martingale solution is a decoupled mild solution of \eqref{PDEIntro}. Conversely, Proposition \ref{MildImpliesMartingale} shows that every decoupled mild solution is a martingale solution. In Theorem \ref{MainTheorem} we prove existence and uniqueness of a decoupled mild solution for \eqref{PDEIntro}. In Section \ref{FBSDEs}, we show how the unique decoupled mild solution of  \eqref{PDEIntro} can be represented via the FBSDEs \eqref{BSDEIntro}.
In Section \ref{exemples} we develop examples of Markov processes and corresponding operators $a$ falling into our abstract setup. In Section \ref{S4a}, we work in the setup of \cite{stroock}, the Markov process is a diffusion with jumps and the corresponding operator is of diffusion type with an additional non-local operator. In Section \ref{S4b} we consider Markov processes associated to pseudo-differential operators (typically the fractional Laplacian) as in \cite{jacob2005pseudo}. In Section \ref{S4c} we study a semilinear parabolic PDE with distributional drift, and the corresponding process is the solution an SDE with distributional drift as defined in \cite{frw1}. Finally in Section \ref{S4d} are interested with diffusions on differential manifolds and associated diffusion operators, an example being the Brownian motion in a Riemannian manifold associated to the Laplace-Beltrami operator.

\section{Preliminaries}\label{S1}

In this section we will recall the notations, notions and results of the companion paper 
\cite{paper1preprint},
 which will be used here.
\begin{notation}
In the whole paper, concerning functional spaces we will use the following notations.

A topological space $E$ will always be considered as a measurable space with its Borel $\sigma$-field which shall be denoted $\mathcal{B}(E)$. Given two topological spaces, $E,F$,  then $\mathcal{C}(E,F)$ (respectively  $\mathcal{B}(E,F)$)  will denote the set of functions from $E$ to $F$ which are continuous (respectively Borel) and if $F$ is a metric space, $\mathcal{C}_b(E,F)$ (respectively $\mathcal{B}_b(E,F)$) will denote the set of functions from $E$ to $F$ which are bounded continuous (respectively bounded Borel). For any $p\in[1,\infty]$, $d\in\mathbbm{N}^*$, $(L^p(\mathbbm{R}^d),\|\cdot\|_p)$ will denote the usual Lebesgue space equipped with its usual norm.
On a fixed probability space $\left(\Omega,\mathcal{F},\mathbbm{P}\right)$, for any $p\in\mathbbm{N}^*$, $L^p(\mathbbm{P})$ will denote the set of random variables (defined up to a.s equality) with finite $p$-th moment.
 A probability space equipped with a right-continuous filtration $\left(\Omega,\mathcal{F},(\mathcal{F}_t)_{t\in\mathbbm{T}},\mathbbm{P}\right)$ (where $\mathbbm{T}$ is equal to $\mathbbm{R}_+$ or to $[0,T]$ for some $T\in\mathbbm{R}_+^*$) will be called a \textbf{stochastic basis} and will be said to \textbf{fulfill the usual conditions} if the probability space is complete and if $\mathcal{F}_0$ contains all the $\mathbbm{P}$-negligible sets. When a stochastic basis is fixed, $\mathcal{P}ro$ denotes the \textbf{progressive $\sigma$-field} on $\mathbbm{T}\times\Omega$.

On a fixed stochastic basis $\left(\Omega,\mathcal{F},(\mathcal{F}_t)_{t\in\mathbbm{T}},\mathbbm{P}\right)$, we will use the following notations and vocabulary,
concerning spaces of stochastic processes, 
 most of them being taken or adapted from \cite{jacod79} or \cite{jacod}.
$\mathcal{M}$ will be the space of c\`adl\`ag martingales.  
For any $p\in[1,\infty]$  $\mathcal{H}^p$ will denote
the subset of $\mathcal{M}$ of elements $M$ such that $\underset{t\in \mathbbm{T}}{\text{sup }}|M_t|\in L^p(\mathbbm{P})$ and in this set we identify indistinguishable elements. It is a Banach space for  the norm
$\| M\|_{\mathcal{H}^p}=\mathbbm{E}[|\underset{t\in \mathbbm{T}}{\text{sup }}M_t|^p]^{\frac{1}{p}}$, and
$\mathcal{H}^p_0$ will denote the Banach subspace of $\mathcal{H}^p$
containing the elements starting at zero.
If $\mathbbm{T}=[0,T]$ for some $T\in\mathbbm{R}_+^*$, a stopping time will be considered as a random variable with values in 
$[0,T]\cup\{+\infty\}$.
We define a \textbf{localizing sequence of stopping times} as an increasing sequence of stopping times $(\tau_n)_{n\geq 0}$ such that there exists $N\in\mathbbm{N}$ for which $\tau_N=+\infty$. Let $Y$ be a process and $\tau$ a stopping time, we denote  $Y^{\tau}$ the process $t\mapsto Y_{t\wedge\tau}$ which we call \textbf{stopped process}.  If $\mathcal{C}$ is a set of processes, we define its \textbf{localized class} $\mathcal{C}_{loc}$ as the set of processes $Y$ such that there exist a localizing sequence $(\tau_n)_{n\geq 0}$ such that for every $n$, the stopped process $Y^{\tau_n}$ belongs to $\mathcal{C}$.
For any $M\in  \mathcal{M}_{loc}$, we denote $[M]$ its \textbf{quadratic variation} and if moreover  $M\in\mathcal{H}^2_{loc}$, $\langle M\rangle$ will denote its (predictable) \textbf{angular bracket}.
$\mathcal{H}_0^2$ will be equipped with scalar product defined by $(M,N)_{\mathcal{H}^2}=\mathbbm{E}[M_TN_T] 
=\mathbbm{E}[\langle M, N\rangle_T] $ which makes it a Hilbert space.
Two local martingales $M,N$ will be said to be \textbf{strongly orthogonal} if $MN$ is a local martingale starting in 0 at time 0. In $\mathcal{H}^2_{0,loc}$ this notion is equivalent to $\langle M,N\rangle=0$.

\end{notation}

As in previous paper 
\cite{paper1preprint}
we will be interested in
a Markov process which is 
the solution of a martingale problem which we now recall below.
For definitions and results concerning Markov processes, the reader may refer to Appendix \ref{A2}. In particular, let $E$ be a Polish space and $T\in\mathbbm{R}_+$ be a finite horizon we now consider  $\left(\Omega,\mathcal{F},(X_t)_{t\in[0,T]},(\mathcal{F}_t)_{t\in[0,T]}\right)$  the canonical space which was introduced in Notation \ref{canonicalspace}, and a Markov (canonical) class measurable in time $(\mathbbm{P}^{s,x})_{(s,x)\in[0,T]\times E}$, see  Definitions \ref{defMarkov} and \ref{DefFoncTrans}. We will also consider the completed stochastic basis $\left(\Omega,\mathcal{F}^{s,x},(\mathcal{F}^{s,x}_t)_{t\in[0,T]},\mathbbm{P}^{s,x}\right)$, 
see Definition \ref{CompletedBasis}.

We now recall what the notion of  martingale problem associated to
 an operator  introduced in Section 4 of \cite{paper1preprint}.
\begin{definition}\label{MartingaleProblem}
Given a linear algebra $\mathcal{D}(a)\subset \mathcal{B}([0,T]\times E,\mathbbm{R})$, and a linear operator $a$ mapping $\mathcal{D}(a)$ into $\mathcal{B}([0,T]\times E,\mathbbm{R})$, we say that a set of probability measures $(\mathbbm{P}^{s,x})_{(s,x)\in [0,T]\times E}$ defined on $(\Omega,\mathcal{F})$ solves the {\bf Martingale Problem associated to}  $(\mathcal{D}(a),a)$ if, for  any 
	$(s,x)\in[0,T]\times E$, $\mathbbm{P}^{s,x}$ verifies
	\begin{description}
		\item{(a)} $\mathbbm{P}^{s,x}(\forall t\in[0,s], X_t=x)=1$;
		\item{(b)} for every $\phi\in\mathcal{D}(a)$, the process $\phi(\cdot,X_{\cdot})-\int_s^{\cdot}a(\phi)(r,X_r)dr$, $t \in [s,T]$ 
		is  a c\`adl\`ag $(\mathbbm{P}^{s,x},(\mathcal{F}_t)_{t\in[s,T]})$-local martingale.
	\end{description}
	We say that the {\bf Martingale Problem is well-posed} if for any $(s,x)\in[0,T]\times E$, $\mathbbm{P}^{s,x}$ is the only probability measure 
satisfying the  properties (a) and (b).
\end{definition}

As for \cite{paper1preprint}, 
in the sequel of the paper we will assume the following.
\begin{hypothesis}\label{MPwellposed}
	The Markov canonical class $(\mathbbm{P}^{s,x})_{(s,x)\in [0,T]\times E}$ solves a well-posed Martingale Problem
 associated to some $(\mathcal{D}(a),a)$  in the sense 
	of  Definition \ref{MartingaleProblem}.
\end{hypothesis}

\begin{notation}\label{Mphi}
	For every $(s,x)\in[0,T]\times E$ and $\phi\in\mathcal{D}(a)$, the process 
	\\
	$t\mapsto\mathds{1}_{[s,T]}(t)\left(\phi(t,X_{t})-\phi(s,x)-\int_s^{t}a(\phi)(r,X_r)dr\right)$ will be denoted $M[\phi]^{s,x}$.
\end{notation}
$M[\phi]^{s,x}$ is a c\`adl\`ag $(\mathbbm{P}^{s,x},(\mathcal{F}_t)_{t\in[0,T]})$-local martingale equal to $0$ on $[0,s]$, and by Proposition \ref{ConditionalExp}, it is also a $(\mathbbm{P}^{s,x},(\mathcal{F}^{s,x}_t)_{t\in[0,T]})$-local martingale.

The bilinear operator below was introduced (in the case of time-homogeneous operators) by J.P. Roth in potential analysis (see Chapter III in \cite{roth}),
 and popularized by P.A. Meyer and others in the study of homogeneous Markov processes
(see for example Expos\'e II: L'op\'erateur carr\'e du champs in \cite{sem10} or 13.46 in \cite{jacod79}).
\begin{definition}\label{SFO}
We introduce the bilinear operator
\begin{equation}
\Gamma : \begin{array}{r c l}
    \mathcal{D}(a) \times \mathcal{D}(a)  & \rightarrow & \mathcal{B}([0,T]\times E) \\
    (\phi, \psi) & \mapsto & a(\phi\psi) - \phi a(\psi) - \psi a(\phi).
   \end{array}
\end{equation} 
The operator $\Gamma$ is 
called the \textbf{carr\'e du champs operator}.

$\Gamma(\phi,\phi)$ will more simply be denoted $\Gamma(\phi)$, and when this mapping takes positive values, $\Gamma(\phi)^{\frac{1}{2}}$ will denote its point-wise square root.
\end{definition}

The angular bracket of the martingales introduced in Notation \ref{Mphi}
 are expressed via the operator $\Gamma$. Proposition 4.8 of 
\cite{paper1preprint}, 
tells the following.

\begin{proposition}\label{bracketindomain}
For any $\phi\in \mathcal{D}(a)$ and $(s,x)\in[0,T]\times E$, $M[\phi]^{s,x}$ is in $\mathcal{H}^2_{0,loc}$.
Moreover, for any $(\phi, \psi)\in \mathcal{D}(a) \times \mathcal{D}(a)$ and $(s,x)\in[0,T]\times E$ we have in $(\Omega,\mathcal{F}^{s,x},(\mathcal{F}^{s,x}_t)_{t\in[0,T]},\mathbbm{P}^{s,x})$ and on the interval $[s,T]$
\begin{equation}
\langle M[\phi]^{s,x} , M[\psi]^{s,x} \rangle = \int_s^{\cdot} \Gamma(\phi,\psi)(r,X_r)dr.
\end{equation}
\end{proposition}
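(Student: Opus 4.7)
My approach is to derive both assertions simultaneously by exploiting the fact that $\mathcal{D}(a)$ is a linear algebra, so $\phi\psi \in \mathcal{D}(a)$ whenever $\phi,\psi \in \mathcal{D}(a)$. In particular $\phi^2 \in \mathcal{D}(a)$, and by Hypothesis \ref{MPwellposed} both
\[
\phi(t,X_t) = \phi(s,x) + \int_s^t a(\phi)(r,X_r)\,dV_r + M[\phi]^{s,x}_t
\]
and the analogous identity for $\phi^2$ hold on $[s,T]$, with $M[\phi]^{s,x}$ and $M[\phi^2]^{s,x}$ cadlag $(\mathbbm{P}^{s,x},(\mathcal{F}_t))$-local martingales. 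Since $V$ is continuous, both finite-variation parts are continuous, so $\phi(\cdot,X_\cdot)$ is a cadlag semimartingale whose purely jump local-martingale part coincides with that of $M[\phi]^{s,x}$.

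Next I would apply the Itô product formula to $\phi^2(\cdot,X_\cdot) = \phi(\cdot,X_\cdot)\cdot\phi(\cdot,X_\cdot)$. Using continuity of the bounded-variation part to kill the cross brackets involving it,
\[
\phi^2(t,X_t)-\phi^2(s,x) = 2\int_s^t \phi(r_-,X_{r-})\,dM[\phi]^{s,x}_r + 2\int_s^t \phi(r,X_r)\,a(\phi)(r,X_r)\,dV_r + [M[\phi]^{s,x}]_t.
\]
Subtracting the decomposition of $\phi^2(\cdot,X_\cdot)$ furnished by the martingale problem and rearranging, I obtain
\[
[M[\phi]^{s,x}]_t - \int_s^t \Gamma(\phi,\phi)(r,X_r)\,dV_r = M[\phi^2]^{s,x}_t - 2\int_s^t \phi(r_-,X_{r-})\,dM[\phi]^{s,x}_r,
\]
whose right-hand side is a local martingale (a local martingale minus a stochastic integral against one, once the integrand is locally bounded after a suitable stopping). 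Thus $[M[\phi]^{s,x}]$ coincides, up to a local martingale, with the continuous adapted finite-variation process $C_t := \int_s^t \Gamma(\phi,\phi)(r,X_r)\,dV_r$, which is in particular predictable.

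To conclude I would localize by a sequence $(\tau_n)$ making $|\phi(\cdot,X_\cdot)|$, $\int_s^\cdot |a(\phi)(r,X_r)|\,dV_r$ and $\int_s^\cdot |\Gamma(\phi,\phi)(r,X_r)|\,dV_r$ bounded on $[\![s,\tau_n]\!]$; under this localization $C^{\tau_n}$ has integrable total variation and the stopped process $[M[\phi]^{s,x}]^{\tau_n}$ is integrable. This proves $M[\phi]^{s,x}\in\mathcal{H}^2_{0,loc}$, and uniqueness of the predictable dual projection of $[M[\phi]^{s,x}]$ identifies $\langle M[\phi]^{s,x}\rangle$ with $C$. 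For the bilinear formula, linearity of $a$ yields $M[\phi+\psi]^{s,x}=M[\phi]^{s,x}+M[\psi]^{s,x}$; applying the already-proved identity to $\phi+\psi$ and $\phi-\psi$ and invoking polarization, together with the bilinearity of $\Gamma$, produces the stated expression for $\langle M[\phi]^{s,x},M[\psi]^{s,x}\rangle$.

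The main obstacle is the localization step: since $a(\phi)$ and $\Gamma(\phi,\phi)$ are only assumed to be Borel, and $\phi(\cdot,X_\cdot)$ need not be bounded on $[s,T]$, one must combine pathwise stopping times controlling the growth of these three quantities with the local-martingale nature of $M[\phi]^{s,x}$ and $M[\phi^2]^{s,x}$ so that all stochastic integrals in the product formula are well defined, the difference in the displayed identity really is a local martingale, and both sides lie in the class where uniqueness of the predictable compensator is available.
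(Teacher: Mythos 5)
The paper does not actually prove this proposition: it is recalled verbatim as Proposition 4.8 of the companion paper \cite{paper1preprint}, so there is no in-text argument to compare yours against. That said, your proof is the standard one for identifying the angular bracket with the carr\'e du champs, and it is correct in all essentials: use that $\mathcal{D}(a)$ is an algebra to get the semimartingale decomposition of both $\phi(\cdot,X_\cdot)$ and $\phi^2(\cdot,X_\cdot)$, apply the It\^o product formula (the cross and pure brackets of the finite-variation part vanish because $V$ is continuous), subtract to see that $[M[\phi]^{s,x}]-\int_s^{\cdot}\Gamma(\phi,\phi)(r,X_r)dV_r$ is a local martingale, deduce local integrability of $[M[\phi]^{s,x}]$ by stopping (which gives $M[\phi]^{s,x}\in\mathcal{H}^2_{0,loc}$), identify $\langle M[\phi]^{s,x}\rangle$ by uniqueness of the predictable compensator of the increasing process $[M[\phi]^{s,x}]$, and polarize using the linearity of $\phi\mapsto M[\phi]^{s,x}$ and the bilinearity of $\Gamma$. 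The only point worth tightening is the localization: to conclude $\mathbbm{E}\bigl[[M[\phi]^{s,x}]_{\tau_n}\bigr]<\infty$ you must intersect your stopping times (which bound $\int_s^{\cdot}|\Gamma(\phi,\phi)(r,X_r)|dV_r$) with a sequence that genuinely localizes the martingale part $M[\phi^2]^{s,x}-2\int_s^{\cdot}Y_{r^-}dM[\phi]^{s,x}_r$, so that its stopped expectation is zero; you gesture at this but it should be said explicitly. With that, the argument is complete and is surely the same route taken in the companion paper.
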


We introduce the space of square integrable martingales with absolutely continuous angle bracket.
\begin{notation} \label{D212}
$\mathcal{H}_0^{2,abs} := \{M\in\mathcal{H}^2_0|d\langle M\rangle_t \ll dt\}$. 
We will also denote $\mathcal{L}^2(dt\otimes d\mathbbm{P})$  the set of (up to indistinguishability)
 progressively measurable processes $\phi$ such that $\mathbbm{E}[\int_0^T \phi^2_rdr]<\infty$.
\end{notation}
We remark  $\mathcal{H}_0^{2,abs}$
corresponds in \cite{paper1preprint} (Section 3.) to
  $\mathcal{H}_0^{2,V}$. In this paper we have set $V_t \equiv t$.
Proposition 4.11 of 
\cite{paper1preprint}
 says the following.
\begin{proposition}\label{H2V=H2}
If Hypothesis \ref{MPwellposed} is verified then under any $\mathbbm{P}^{s,x}$, \\
$\mathcal{H}_0^2=\mathcal{H}_0^{2,abs}$.
\end{proposition}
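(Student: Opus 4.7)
The inclusion $\mathcal{H}^{2,V} \subset \mathcal{H}_0^2$ is immediate by definition, so the work is entirely in the reverse direction. The plan is to exhibit $\mathcal{H}^{2,V}$ as a closed stable subspace of $\mathcal{H}_0^2$ which, after localization, contains the family $\{M[\phi]^{s,x} : \phi \in \mathcal{D}(a)\}$, and then invoke the well-posedness of the martingale problem through a Jacod--Yor type argument to show that this family generates all of $\mathcal{H}_0^2$.

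First I would check the structural properties of $\mathcal{H}^{2,V}$. Stability under stochastic integration is immediate from the identity $\langle H\cdot M\rangle = \int_0^\cdot H_r^2\,d\langle M\rangle_r$. Closedness in $\mathcal{H}_0^2$ is best seen via the predictable Dol\'eans measure $\mu_M := d\langle M\rangle\otimes d\mathbbm{P}^{s,x}$ on the predictable $\sigma$-field: the condition $M\in\mathcal{H}^{2,V}$ is equivalent to $\mu_M \ll dV\otimes d\mathbbm{P}^{s,x}$, and absolute continuity persists under total-variation limits, which are in turn controlled by $\mathcal{H}^2$-convergence. Proposition~\ref{bracketindomain} gives $\langle M[\phi]^{s,x}\rangle = \int_s^\cdot \Gamma(\phi,\phi)(r,X_r)\,dV_r$, which is absolutely continuous with respect to $dV$; so after a localizing sequence $\tau_n$ bringing $(M[\phi]^{s,x})^{\tau_n}$ into $\mathcal{H}_0^2$, each such stopped martingale lies in $\mathcal{H}^{2,V}$.

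Let $\mathcal{L}$ be the closed stable subspace of $\mathcal{H}_0^2$ generated by these stopped martingales; by the above, $\mathcal{L}\subset\mathcal{H}^{2,V}$. I would finish by showing $\mathcal{L}^\perp=\{0\}$. Pick $N\in\mathcal{H}_0^2$ strongly orthogonal to every $M[\phi]^{s,x}$. Since under $\mathbbm{P}^{s,x}$ the canonical process is constantly $x$ on $[0,s]$, the completed filtration $\mathcal{F}_t^{s,x}$ is $\mathbbm{P}^{s,x}$-trivial for $t\le s$, so $N\equiv 0$ on $[0,s]$. After a further reduction I may assume $|N|\le K$. Set $Z_t := 1 + \tfrac{1}{2K}N_t$; this is a strictly positive martingale with $Z_0=Z_s=1$, and the probability $\mathbbm{Q}$ defined by $d\mathbbm{Q}/d\mathbbm{P}^{s,x} = Z_T$ agrees with $\mathbbm{P}^{s,x}$ on $\mathcal{F}_s$, so condition~(a) of Definition~\ref{MartingaleProblem} is preserved. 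The orthogonality yields $\langle Z, M[\phi]^{s,x}\rangle = \tfrac{1}{2K}\langle N, M[\phi]^{s,x}\rangle = 0$, so Girsanov's theorem shows that every $M[\phi]^{s,x}$ remains a local martingale under $\mathbbm{Q}$, giving condition~(b). Hypothesis~\ref{MPwellposed} forces $\mathbbm{Q}=\mathbbm{P}^{s,x}$, whence $Z\equiv 1$ and $N\equiv 0$.

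The main obstacle is this Jacod--Yor/Girsanov step: the localization required to make $N$ bounded must be handled with care, and one must verify that both properties of the martingale problem really survive the change of measure (property~(a) via triviality of $\mathcal{F}_s^{s,x}$, property~(b) via strong orthogonality and the drift-cancellation in Girsanov). Everything else is routine, and the argument does not depend on exhausting all of $\mathcal{D}(a)$: any generating subfamily for which the martingale problem is uniquely solvable would do.
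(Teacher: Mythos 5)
You should note first that this paper does not actually prove Proposition \ref{H2V=H2}: it is imported verbatim from Proposition 4.11 of the companion paper \cite{paper1preprint}, and the argument there is exactly the one you outline --- $\mathcal{H}^{2,V}$ is a closed stable subspace of $\mathcal{H}^2_0$ containing the localized martingales $M[\phi]^{s,x}$, and well-posedness of the martingale problem gives, via the Jacod--Yor theorem, that the stable subspace generated by these is all of $\mathcal{H}^2_0$. Your proposal is therefore correct and follows essentially the same route; the one step where ``handled with care'' conceals a genuine argument is the reduction to bounded $N$ in the orthogonality part, since stopping an $\mathcal{H}^2_0$-martingale at $\inf\{t:|N_t|\ge K\}$ does not bound it (the jump at the stopping time is only square integrable), and the standard resolution (extremality together with $\mathcal{H}^1$--$BMO$ duality, or simply citing Theorem 11.2 of \cite{jacod79}) should be invoked rather than reproved.
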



In the sequel, several  functional equations  will 
hold up to a \textbf{zero potential} set
that we recall below.

\begin{definition}\label{zeropotential}
For any $(s,x)\in[0,T]\times E$ we define the \textbf{potential measure} $U(s,x,\cdot)$ on $\mathcal{B}([0,T]\times E)$ by
 $U(s,x,A) := \mathbbm{E}^{s,x}\left[\int_s^{T} \mathds{1}_{\{(t,X_t)\in A\}}dt\right]$.
\\
A Borel set $A\in\mathcal{B}([0,T]\times E)$ will be said to be
 {\bf of zero potential} if, for any $(s,x)\in[0,T]\times E$  we have  $U(s,x,A) = 0$.
\end{definition}

\begin{notation}\label{topo}
Let $p > 0$, we define
$$\mathcal{L}^p_{s,x} :=\left\{ f\in \mathcal{B}([0,T]\times E,\mathbbm{R}):\, \mathbbm{E}^{s,x}\left[\int_s^{T} |f|^p(r,X_r)dr\right] < \infty\right\},$$
on which we introduce the usual semi-norm $\|\cdot\|_{p,s,x}: f\mapsto\left(\mathbbm{E}^{s,x}\left[\int_s^{T}|f(r,X_r)|^pdr\right]\right)^{\frac{1}{p}}$
 We also denote 
$\mathcal{L}^0_{s,x}  :=\left\{ f\in \mathcal{B}([0,T]\times E,\mathbbm{R}):\, \int_s^{T} |f|(r,X_r)dr < \infty\,\mathbbm{P}^{s,x}\text{ a.s. }\right\}$.
\\
For any $p \ge0$, we then define an intersection of these spaces, 
i.e. 
\\
$\mathcal{L}^p_X:=\underset{(s,x)\in[0,T]\times E}{\bigcap}\mathcal{L}^p_{s,x}$.
Finally, let $\mathcal{N}$ the linear subspace of $\mathcal{B}([0,T]\times E,\mathbbm{R})$ containing all functions which are equal to 0 $U(s,x,\cdot)$ a.e. for every $(s,x)$. For any $p\in \mathbbm{N}$, we define  the quotient space $L^p_X := \mathcal{L}^p_X /\mathcal{N}$.
If $p\geq 1$, $L^p_X$ can be equipped with the topology generated by the family of semi-norms $\left(\|\cdot\|_{p,s,x}\right)_{(s,x)\in[0,T]\times E}$ which makes it into a separable locally convex topological vector space.
\end{notation}

The statement below was stated in Proposition 4.14 of 
\cite{paper1preprint}.
\begin{proposition}\label{uniquenessupto}
Let $f$ and $g$ be in $\mathcal{B}([0,T]\times E,\mathbbm{R})$ such that the processes $\int_s^{\cdot}f(r,X_r)dr$ and $\int_s^{\cdot}g(r,X_r)dr$ are finite $\mathbbm{P}^{s,x}$ a.s. for any $(s,x) \in [0,T] \times E$.
Then $f$ and $g$ are equal up a zero potential set if and only if $\int_s^{\cdot}f(r,X_r)dr$ and $\int_s^{\cdot}g(r,X_r)dr$ are indistinguishable under $\mathbbm{P}^{s,x}$ for any $(s,x)\in[0,T]\times E$.
\end{proposition}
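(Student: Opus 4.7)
My plan is to reduce the equivalence to a clean Fubini statement applied to the single Borel function $h := f - g$ and the positive product measure $dV \otimes d\mathbbm{P}^{s,x}$. The crucial identity is
\begin{equation*}
U(s,x,\{h\neq 0\}) \;=\; \mathbbm{E}^{s,x}\!\left[\int_s^T \mathds{1}_{\{h(r,X_r)\neq 0\}}\,dV_r\right],
\end{equation*}
directly from the definition of the potential measure (Definition \ref{zeropotential}). Hence $h$ vanishes off a zero-potential set for every $(s,x)$ if and only if, for every $(s,x)$, one has $h(r,X_r(\omega))=0$ for $dV_r(\omega)$-almost every $r\in[s,T]$, $\mathbbm{P}^{s,x}$-a.s.

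For the direct implication, I would fix $(s,x)$ and let $N\in\mathcal{F}^{s,x}$ be the $\mathbbm{P}^{s,x}$-null set off which $h(\cdot,X_\cdot)$ vanishes $dV$-a.e. For every $\omega\notin N$ and every $t\in[s,T]$, modifying the integrand on a $dV(\omega)$-null set does not change the integral, so $\int_s^t f(r,X_r)\,dV_r = \int_s^t g(r,X_r)\,dV_r$ for all $t$ simultaneously. This is exactly indistinguishability, using the single exceptional set $N$. There is no regularity issue here because the integrals are automatically continuous in $t$ (as indefinite integrals with respect to the continuous $V$).

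For the converse, I would start from a $\mathbbm{P}^{s,x}$-null set off which the two cumulative integrals coincide for every $t\in[s,T]$. Off that null set, the signed Borel measure $\mu_\omega(B) := \int_B h(r,X_r(\omega))\,dV_r(\omega)$ on $[s,T]$ (which is of finite total variation by the standing assumption of finiteness of $\int_s^T|h|(r,X_r)\,dV_r$) satisfies $\mu_\omega([s,t])=0$ for all $t$. A standard monotone class / uniqueness of measures argument then gives $\mu_\omega\equiv 0$; equivalently $|\mu_\omega|\equiv 0$, i.e.\ $h(r,X_r(\omega))=0$ for $dV_r(\omega)$-a.e.\ $r$. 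Taking expectations via Fubini in the displayed identity above yields $U(s,x,\{h\neq 0\})=0$, and since this holds for every $(s,x)$ the set $\{f\neq g\}$ is of zero potential.

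The only mildly delicate step is the passage from vanishing of the cumulative integral for every $t$ to vanishing of the underlying signed measure; this needs the finiteness assumption on $\int_s^T|h|(r,X_r)\,dV_r$ to split $h = h^+ - h^-$ with $\mu_\omega = \mu_\omega^+ - \mu_\omega^-$ both finite, and then a Dynkin-system argument on the $\pi$-system $\{[s,t] : t\in[s,T]\}$. Everything else is a direct bookkeeping exercise with Fubini.
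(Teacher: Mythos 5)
Your argument is correct and complete: the equivalence does reduce, via the definition of the potential measure, to the statement that for every $(s,x)$ the function $h=f-g$ composed with $(r,X_r)$ vanishes $dV$-a.e.\ $\mathbbm{P}^{s,x}$-a.s., and both directions (the trivial one, and the uniqueness-of-measures argument recovering the vanishing of the density from the vanishing of all cumulative integrals, which indeed requires the standing finiteness assumption to have finite measures $\mu_\omega^{\pm}$) are handled properly. Note that this paper does not reprove the proposition --- it is quoted from Proposition 4.14 of the companion paper --- so there is no in-text proof to compare against; your route is the natural one for this statement.
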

We recall that if two functions $f, g$ differ 
only on a zero potential set then they represent the same element of $L^0_X$.
We recall our notion of \textbf{extended generator}.
\begin{definition}\label{domainextended}
 We first define the \textbf{extended domain} $\mathcal{D}(\mathfrak{a})$ as the set functions $\phi\in\mathcal{B}([0,T]\times E,\mathbbm{R})$ for which there exists 
$\psi\in\mathcal{B}([0,T]\times E,\mathbbm{R})$ such that under any $\mathbbm{P}^{s,x}$ the process
\begin{equation} \label{E45}
     \mathds{1}_{[s,T]}\left(\phi(\cdot,X_{\cdot}) - \phi(s,x) - \int_s^{\cdot}\psi(r,X_r)dr \right), 
\end{equation}
(which is not necessarily c\`adl`ag) has a c\`adl\`ag modification in $\mathcal{H}^2_{0}$. 
\end{definition}

Proposition 4.16 in \cite{paper1preprint}
states the following.
\begin{proposition}\label{uniquenesspsi}
	Let $\phi \in {\mathcal B}([0,T] \times E, {\mathbbm R}).$
There is at most one (up to zero potential sets)  $\psi 
\in {\mathcal B}([0,T] \times E, {\mathbbm R})$ such that
under any $\mathbbm{P}^{s,x}$, the process defined in \eqref{E45} 
has a modification which belongs to ${\mathcal M}_{loc}$.
	
	If moreover $\phi\in\mathcal{D}(a)$, then $a(\phi)=\psi$ up to zero potential sets. In this case, according to Notation \ref{Mphi},
 for every $(s,x)\in[0,T]\times E$, 
  $M[\phi]^{s,x}$ is the $\mathbbm{P}^{s,x}$ c\`adl\`ag modification
 in $\mathcal{H}^2_{0}$ of
	 $\mathds{1}_{[s,T]}\left(\phi(\cdot,X_{\cdot}) - \phi(s,x) - \int_s^{\cdot}\psi(r,X_r)dr \right)$.
\end{proposition}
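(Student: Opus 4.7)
The plan is to prove uniqueness of $\psi$ first, and then derive both the identification $a(\phi)=\psi$ for $\phi\in\mathcal{D}(a)$ and the identification of $M[\phi]^{s,x}$ with the $\mathcal{H}^2_0$ cadlag modification as immediate consequences. The main engine throughout is the fact that a continuous local martingale of finite variation starting from zero is indistinguishable from zero, combined with Proposition \ref{uniquenessupto}.

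For the uniqueness step, I would take two Borel functions $\psi_1,\psi_2$ for which the process \eqref{E45}, associated with each of them, admits cadlag modifications $M^1,M^2\in\mathcal{M}_{loc}(\mathbbm{P}^{s,x})$ under every $\mathbbm{P}^{s,x}$. In order for \eqref{E45} to be well-defined as a real-valued process, each $\int_s^{\cdot}\psi_i(r,X_r)dV_r$ is $\mathbbm{P}^{s,x}$-a.s.\ finite, so the continuous adapted process
\[
N^{1,2}_t:=\mathds{1}_{[s,T]}(t)\int_s^{t}(\psi_2-\psi_1)(r,X_r)dV_r
\]
has finite variation and starts at zero. By construction $M^1-M^2$ is a cadlag local martingale which is also a modification of $N^{1,2}$; since the latter is continuous, right-continuity of both processes implies that $M^1-M^2$ and $N^{1,2}$ are $\mathbbm{P}^{s,x}$-indistinguishable. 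Hence $M^1-M^2$ is a continuous local martingale of finite variation starting at zero and is therefore indistinguishable from zero. This forces $\int_s^{\cdot}(\psi_2-\psi_1)(r,X_r)dV_r$ to be indistinguishable from zero under every $\mathbbm{P}^{s,x}$, and Proposition \ref{uniquenessupto} yields $\psi_1=\psi_2$ up to a zero potential set.

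The remaining two assertions follow quickly. When $\phi\in\mathcal{D}(a)$, Definition \ref{MartingaleProblem}(b) together with Notation \ref{Mphi} imply that $M[\phi]^{s,x}$ is a cadlag $(\mathbbm{P}^{s,x},(\mathcal{F}_t))$-local martingale for every $(s,x)$, so $a(\phi)$ is itself an admissible choice of $\psi$ in the sense of the statement, and the uniqueness part proved above gives $a(\phi)=\psi$ up to zero potential. In the situation where additionally $\phi\in\mathcal{D}(\mathfrak{a})$ (which is implicit in the final claim, since an $\mathcal{H}^2_0$ modification is asserted to exist), there is a cadlag modification $\widetilde M$ of \eqref{E45} lying in $\mathcal{H}^2_0$ under each $\mathbbm{P}^{s,x}$. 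Since $a(\phi)$ and the $\psi$ witnessing $\phi\in\mathcal{D}(\mathfrak{a})$ agree up to zero potential, Proposition \ref{uniquenessupto} renders their corresponding integral processes indistinguishable, so $M[\phi]^{s,x}$ is also a cadlag modification of \eqref{E45}; it must therefore be indistinguishable from $\widetilde M$ and a fortiori lie in $\mathcal{H}^2_0$.

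The main delicate point is the bookkeeping of modifications: the process \eqref{E45} is not a priori cadlag, and uniqueness has to be extracted by passing through cadlag local martingale modifications and identifying their difference with a continuous finite variation process. That identification rests solely on the right-continuity of cadlag paths. Once this step is absorbed, the rest reduces cleanly to Proposition \ref{uniquenessupto}.
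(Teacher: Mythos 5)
Your argument is correct. Note, however, that the paper under review does not actually prove Proposition \ref{uniquenesspsi}: it is recalled verbatim from the companion paper (Proposition 4.16 of \cite{paper1preprint}), so there is no in-paper proof to compare against. Your route — the difference of the two candidate compensators is a cadlag local martingale that is a modification of a continuous finite-variation process starting at zero, hence indistinguishable from it, hence zero, after which Proposition \ref{uniquenessupto} converts indistinguishability of the integral processes into equality of $\psi_1$ and $\psi_2$ up to zero potential sets — is the standard and expected one, and your handling of the two follow-up claims (reading the $\mathcal{H}^2_0$ assertion as implicitly requiring $\phi\in\mathcal{D}(\mathfrak{a})$, then identifying $M[\phi]^{s,x}$ with the $\mathcal{H}^2_0$ modification via uniqueness of cadlag modifications) is sound.
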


\begin{definition}\label{extended}
Let $\phi \in \mathcal{D}(\mathfrak{a})$ as in Definition
 \ref{domainextended}.
 We denote again  by $M[\phi]^{s,x}$, the unique
 c\`adl\`ag version
 of the process \eqref{E45} in $\mathcal{H}^2_{0}$.
Taking Proposition \ref{uniquenessupto} into account, this will not
 generate  any ambiguity with respect
to Notation \ref{Mphi}.   
Proposition \ref{uniquenessupto}, also permits to define without ambiguity the operator  
\begin{equation*}
\mathfrak{a}:
\begin{array}{rcl}
\mathcal{D}(\mathfrak{a})&\longrightarrow& L^0_X\\
\phi &\longmapsto & \psi.
\end{array}
\end{equation*}
$\mathfrak{a}$ will be called the \textbf{extended generator}.
\end{definition}

We also extend the carr\'e du champs operator $\Gamma(\cdot,\cdot)$
 to $\mathcal{D}(\mathfrak{a})\times\mathcal{D}(\mathfrak{a})$. 
Proposition 4.18 in 
\cite{paper1preprint}
states the following.
\begin{proposition} \label{P321}
Let $\phi$ and $\psi$ be in $\mathcal{D}(\mathfrak{a})$, there exists a (unique up to zero-potential sets) function in $\mathcal{B}([0,T]\times E,\mathbbm{R})$ which we will denote $\mathfrak{G}(\phi,\psi)$ such that under any $\mathbbm{P}^{s,x}$, 
$\langle M[\phi]^{s,x},M[\psi]^{s,x}\rangle=\int_s^{\cdot}\mathfrak{G}(\phi,\psi)(r,X_r)dr$ on $[s,T]$, up to indistinguishability.
If moreover $\phi$ and $\psi$ belong to $\mathcal{D}(a)$, then $\Gamma(\phi,\psi)=\mathfrak{G}(\phi,\psi)$ up to zero potential sets.
\end{proposition}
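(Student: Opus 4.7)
The plan is to establish existence by a two-step procedure: first handle the diagonal case $\phi = \psi$, obtaining $\mathfrak{G}(\phi,\phi)$, then extend by polarization. Uniqueness and the consistency with $\Gamma$ will follow immediately from results already stated in the excerpt.

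For the diagonal case, fix $(s,x) \in [0,T] \times E$. By Definition \ref{domainextended}, $M[\phi]^{s,x}$ lies in $\mathcal{H}^2_0$ under $\mathbbm{P}^{s,x}$, and by Proposition \ref{H2V=H2} we have $d\langle M[\phi]^{s,x}\rangle \ll dV$ on $[s,T]$, so a predictable density $k^{s,x}$ exists with $\langle M[\phi]^{s,x}\rangle = \int_s^{\cdot} k^{s,x}_r\, dV_r$. The crux is to exhibit a single Borel function $g \in \mathcal{B}([0,T]\times E,\mathbbm{R})$, not depending on $(s,x)$, such that $k^{s,x}_r = g(r,X_r)$ holds $dV \otimes d\mathbbm{P}^{s,x}$-a.e. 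A natural candidate is the Lebesgue-differentiation limit
\[
g(r,y) := \limsup_{h\downarrow 0}\, \frac{1}{V_{r+h}-V_r}\,\mathbbm{E}^{r,y}\!\left[\bigl(M[\phi]^{r,y}_{r+h}\bigr)^2\right],
\]
which is Borel thanks to the measurability of the Markov family $(\mathbbm{P}^{s,x})$ in the initial datum. The Markov property combined with $dV$-Lebesgue differentiation of the predictable bracket should then force $g(r,X_r) = k^{s,x}_r$ $dV \otimes d\mathbbm{P}^{s,x}$-a.e.

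Since $\mathcal{D}(\mathfrak{a})$ is visibly linear (the process \eqref{E45} depends linearly on $\phi$), one then sets
\[
\mathfrak{G}(\phi,\psi) := \tfrac{1}{4}\bigl(\mathfrak{G}(\phi+\psi,\phi+\psi) - \mathfrak{G}(\phi-\psi,\phi-\psi)\bigr),
\]
and the bracket identity for the bilinear case follows from polarization of $\langle\cdot,\cdot\rangle$. Uniqueness up to zero potential is immediate from Proposition \ref{uniquenessupto} applied to two putative densities. For the last assertion, if $\phi,\psi \in \mathcal{D}(a) \subset \mathcal{D}(\mathfrak{a})$, Proposition \ref{bracketindomain} gives $\langle M[\phi]^{s,x},M[\psi]^{s,x}\rangle = \int_s^{\cdot}\Gamma(\phi,\psi)(r,X_r)\,dV_r$ under every $\mathbbm{P}^{s,x}$, whence uniqueness forces $\Gamma(\phi,\psi) = \mathfrak{G}(\phi,\psi)$ off a zero-potential set.

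The main obstacle is the identification step producing the joint Borel representative $g$: the Markov structure delivers measurability in the starting point, but one must rigorously pass from a.e. statements that hold separately under each $\mathbbm{P}^{s,x}$ to a single Borel function independent of $(s,x)$. This is essentially the mechanism employed in Proposition \ref{uniquenesspsi} to define the extended generator $\mathfrak{a}$, and I would closely follow that blueprint, taking care that the Lebesgue-differentiation step behaves uniformly enough in $(s,x)$.
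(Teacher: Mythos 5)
First, a point of reference: this paper does not actually prove Proposition \ref{P321}; it is quoted from Proposition 4.18 of the companion paper \cite{paper1preprint}, so there is no in-text proof to compare against. Judged on its own terms, your architecture is the right one and does mirror the strategy used where the result is actually proved: reduce to the diagonal case, use Proposition \ref{H2V=H2} to get $d\langle M[\phi]^{s,x}\rangle\ll dV$ and hence a predictable density $k^{s,x}$, manufacture a single Borel representative by a $dV$-differentiation of $h\mapsto \mathbbm{E}^{r,y}\bigl[(M[\phi]^{r,y}_{r+h})^2\bigr]$, recover the bilinear case by polarization (legitimate: $\mathcal{D}(\mathfrak{a})$ is a linear space because $\mathcal{H}^2_0$ is stable under sums), obtain uniqueness from Proposition \ref{uniquenessupto}, and deduce $\Gamma(\phi,\psi)=\mathfrak{G}(\phi,\psi)$ on $\mathcal{D}(a)$ from Proposition \ref{bracketindomain} together with that uniqueness. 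Those last three steps are complete as written.

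The gap is exactly where you locate it, and it is not cosmetic. Using the additivity $M[\phi]^{s,x}_{r+h}-M[\phi]^{s,x}_r=M[\phi]^{r,X_r}_{r+h}$ and the Markov property \eqref{Markov3}, one has $\mathbbm{E}^{r,X_r}\bigl[(M[\phi]^{r,X_r}_{r+h})^2\bigr]=\mathbbm{E}^{s,x}\bigl[\langle M[\phi]^{s,x}\rangle_{r+h}-\langle M[\phi]^{s,x}\rangle_{r}\,\big|\,\mathcal{F}_r\bigr]$, so your claim $g(r,X_r)=k^{s,x}_r$ amounts to $\limsup_{h\downarrow 0}\frac{1}{V_{r+h}-V_r}\,\mathbbm{E}^{s,x}\bigl[\int_r^{r+h}k^{s,x}_v\,dV_v\,\big|\,\mathcal{F}_r\bigr]=k^{s,x}_r$ for $dV\otimes d\mathbbm{P}^{s,x}$-a.e.\ $(r,\omega)$. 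The Lebesgue differentiation theorem for the measure $dV$ gives the inner ratio $\to k^{s,x}_r$ pathwise \emph{without} the conditional expectation, but passing the $\limsup$ through $\mathbbm{E}^{s,x}[\,\cdot\,|\mathcal{F}_r]$ requires a domination or uniform-integrability argument that you do not supply; in general $\limsup_h\mathbbm{E}[A_h|\mathcal{F}_r]\neq\mathbbm{E}[\limsup_h A_h|\mathcal{F}_r]$. This interchange is the entire content of the proposition, and the analogy you invoke is misleading: for $\mathfrak{a}$ the existence of the representing function $\psi$ is built into the \emph{definition} of $\mathcal{D}(\mathfrak{a})$ and Proposition \ref{uniquenesspsi} only handles uniqueness, whereas here the existence of a Borel density for the bracket is a genuine theorem with no blueprint available in this paper. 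Secondary, fixable points: the $\limsup$ should run over a countable set of $h$ to keep $g$ Borel (harmless, since $\langle M\rangle=\int k\,dV$ is continuous in $h$); the set of $r$ with $V_{r+h}=V_r$ for small $h$ must be argued $dV$-null; and Borel measurability in $(r,y)$ needs expanding the square and invoking Proposition \ref{Borel}, Lemma \ref{LemmaBorel} and Proposition \ref{measurableint}, since the integrand depends on $(r,y)$ and not only on the measure $\mathbbm{P}^{r,y}$.
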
 
\begin{notation}
	$\mathfrak{G}(\phi,\phi)$ will be denoted  $\mathfrak{G}(\phi)$ and if that function takes positive values, $\mathfrak{G}(\phi)^{\frac{1}{2}}$ will denotes its point-wise square root.
\end{notation}

\begin{definition}\label{extendedgamma}
The bilinear operator $\mathfrak{G}:\mathcal{D}(\mathfrak{a})\times\mathcal{D}(\mathfrak{a})\longmapsto L^0_X$ will be called  the \textbf{extended carr\'e du champs operator}.
\end{definition}
According to Definition \ref{domainextended}, we do not have
necessarily  $\mathcal{D}(a)\subset\mathcal{D}(\mathfrak{a})$,
however we have the following.

\begin{corollary}\label{RExtendedClassical} 
If $\phi\in\mathcal{D}(a)$ and $\Gamma(\phi)\in\mathcal{L}^1_X$,
 then $\phi\in\mathcal{D}(\mathfrak{a})$ and $(a(\phi),\Gamma(\phi))=(\mathfrak{a}(\phi),\mathfrak{G}(\phi))$ up to zero potential sets.
\end{corollary}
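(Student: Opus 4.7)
My plan is to reduce the corollary to the single point that $\phi\in\mathcal{D}(\mathfrak{a})$, after which the two identifications $a(\phi)=\mathfrak{a}(\phi)$ and $\Gamma(\phi,\phi)=\mathfrak{G}(\phi,\phi)$ (up to zero-potential sets) fall out directly from the ``moreover'' clauses already recorded in Propositions \ref{uniquenesspsi} and \ref{P321}. So essentially all the work is in upgrading $M[\phi]^{s,x}$ from $\mathcal{H}^2_{0,loc}$ to $\mathcal{H}^2_0$.

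First I would recall that because $\phi\in\mathcal{D}(a)$, the well-posed martingale problem of Definition \ref{MartingaleProblem} gives, for every $(s,x)$, that the process $M[\phi]^{s,x}$ of Notation \ref{Mphi} is a cadlag $(\mathbbm{P}^{s,x},(\mathcal{F}_t))$-local martingale starting at $0$. Proposition \ref{bracketindomain} then provides
\[
\langle M[\phi]^{s,x}\rangle_t=\int_s^{t}\Gamma(\phi,\phi)(r,X_r)dV_r,\qquad t\in[s,T],
\]
so in particular $M[\phi]^{s,x}\in\mathcal{H}^2_{0,loc}$.

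Next I would exploit the hypothesis $\Gamma(\phi,\phi)\in\mathcal{L}^1_X$, which by Notation \ref{topo} means exactly that
\[
\mathbbm{E}^{s,x}\!\left[\langle M[\phi]^{s,x}\rangle_T\right]=\mathbbm{E}^{s,x}\!\left[\int_s^T\Gamma(\phi,\phi)(r,X_r)dV_r\right]<\infty
\]
for every $(s,x)\in[0,T]\times E$. The standard local-to-global upgrade (pick a reducing sequence $(\tau_n)$, apply the $\mathcal{H}^2$-isometry $\mathbbm{E}^{s,x}[(M[\phi]^{s,x}_{T\wedge\tau_n})^2]=\mathbbm{E}^{s,x}[\langle M[\phi]^{s,x}\rangle_{T\wedge\tau_n}]$, pass to the limit by monotone convergence on the right and Fatou plus Doob on the left) yields $M[\phi]^{s,x}\in\mathcal{H}^2_0$. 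Since the process displayed in \eqref{E45} with the choice $\psi=a(\phi)$ is exactly $M[\phi]^{s,x}$, which is already cadlag and now in $\mathcal{H}^2_0$, Definition \ref{domainextended} is satisfied: $\phi\in\mathcal{D}(\mathfrak{a})$.

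Finally, the identification step is free. The second part of Proposition \ref{uniquenesspsi} forces $\mathfrak{a}(\phi)=a(\phi)$ up to a zero-potential set, and the second part of Proposition \ref{P321} forces $\mathfrak{G}(\phi,\phi)=\Gamma(\phi,\phi)$ up to a zero-potential set. The only step that required genuine argument is the $\mathcal{H}^2_{0,loc}\to\mathcal{H}^2_0$ promotion, which is entirely routine once the $\mathcal{L}^1_X$ hypothesis is translated into integrability of the angular bracket; I do not anticipate any real obstacle.
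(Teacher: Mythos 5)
Your proof is correct, and it follows the route the paper evidently intends: the paper states this corollary without proof (it is recalled from the companion paper), and the natural derivation is exactly yours — use Proposition \ref{bracketindomain} to write $\langle M[\phi]^{s,x}\rangle=\int_s^{\cdot}\Gamma(\phi,\phi)(r,X_r)dV_r$, translate $\Gamma(\phi,\phi)\in\mathcal{L}^1_X$ into integrability of the bracket so that the standard Doob/monotone-convergence argument upgrades $M[\phi]^{s,x}$ from $\mathcal{H}^2_{0,loc}$ to $\mathcal{H}^2_0$, and then read off the identifications from the ``moreover'' clauses of Propositions \ref{uniquenesspsi} and \ref{P321}. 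No gaps.
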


We also recall Lemma 5.12 of 
\cite{paper1preprint}.
\begin{lemma}\label{ModifImpliesdV}
	Let $(s,x)\in[0,T]\times E$ be fixed and let $\phi,\psi$ be two measurable processes. If $\phi$ and $\psi$ are $\mathbbm{P}^{s,x}$-modifications of each other, then they are equal $dt\otimes d\mathbbm{P}^{s,x}$ a.e.
\end{lemma}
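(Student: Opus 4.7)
The plan is to exploit the fact that in our setup (Definition \ref{MartingaleProblem}), $V$ is a \emph{deterministic} continuous non-decreasing function on $[0,T]$. Consequently $dV$ is a finite deterministic Borel measure on $[0,T]$, and $dV\otimes d\mathbbm{P}^{s,x}$ as defined in Definition \ref{StochMeasures} is literally the product of two finite measures on $([0,T]\times\Omega,\mathcal{B}([0,T])\otimes\mathcal{F})$. Once this is noted, the result follows from Fubini--Tonelli in essentially one line.

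Concretely, first I would set $F:=\{(t,\omega):\phi_t(\omega)\neq\psi_t(\omega)\}$. Since $\phi$ and $\psi$ are measurable processes (i.e.\ jointly measurable as maps from $[0,T]\times\Omega$ to $\mathbbm{R}$), the difference $\phi-\psi$ is $\mathcal{B}([0,T])\otimes\mathcal{F}$-measurable, so $F$ belongs to this product $\sigma$-field and the quantity $dV\otimes d\mathbbm{P}^{s,x}(F)$ is well-defined.

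Next, applying the definition of $dV\otimes d\mathbbm{P}^{s,x}$ and then Fubini--Tonelli (valid since both $dV$ and $d\mathbbm{P}^{s,x}$ are finite, and $\mathds{1}_F\ge 0$), I would write
\begin{equation*}
dV\otimes d\mathbbm{P}^{s,x}(F)\;=\;\mathbbm{E}^{s,x}\!\left[\int_0^T \mathds{1}_F(t,\omega)\,dV(t)\right]\;=\;\int_0^T \mathbbm{P}^{s,x}(\phi_t\neq\psi_t)\,dV(t).
\end{equation*}
By the modification hypothesis $\mathbbm{P}^{s,x}(\phi_t\neq\psi_t)=0$ for every $t\in[0,T]$, so the integrand vanishes identically and $dV\otimes d\mathbbm{P}^{s,x}(F)=0$, which is the claim.

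There is no real obstacle here; the only point worth emphasizing is that the argument relies crucially on $V$ being deterministic, so that $dV\otimes d\mathbbm{P}^{s,x}$ is a genuine product measure and Fubini applies without having to worry about the $\omega$-dependence of the ``time'' marginal. Since that determinism is built into Definition \ref{MartingaleProblem} and Hypothesis \ref{MPwellposed}, the lemma follows immediately.
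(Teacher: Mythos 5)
Your proof is correct, and it is the standard Fubini--Tonelli argument one would expect: since $V$ is deterministic, $dV\otimes d\mathbbm{P}^{s,x}$ is a genuine product of finite measures, the set $F=\{\phi\neq\psi\}$ is product-measurable because the processes are jointly measurable, and $dV\otimes d\mathbbm{P}^{s,x}(F)=\int_0^T\mathbbm{P}^{s,x}(\phi_t\neq\psi_t)\,dV_t=0$ by the modification hypothesis. The present paper only recalls this lemma from the companion paper without reproving it, so there is nothing to compare against beyond noting that your argument is the natural (essentially unique) one.
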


We now keep in mind the Pseudo-Partial Differential Equation (in short 
Pseudo-PDE), with final condition,
that we have introduced in 
\cite{paper1preprint}.
\\
Let us consider the following data.
\begin{enumerate}
	\item A measurable final condition
	$g\in\mathcal{B}(E,\mathbbm{R})$;
	\item a measurable nonlinear function
	$f\in\mathcal{B}([0,T]\times E\times\mathbbm{R}\times\mathbbm{R},\mathbbm{R})$.
\end{enumerate}
The equation is
\begin{equation}\label{PDE}
\left\{
\begin{array}{rccc}
a(u) + f\left(\cdot,\cdot,u,\Gamma(u)^{\frac{1}{2}}\right)&=&0& \text{ on } [0,T]\times E   \\
u(T,\cdot)&=&g.& 
\end{array}\right.
\end{equation}
\begin{notation}
	Equation \eqref{PDE} will be denoted $Pseudo-PDE(f,g)$.
\end{notation}
\begin{definition}\label{MarkovPDE}
	We will say that $u$ is a \textbf{classical solution} of 
	$Pseudo-PDE(f,g)$
	if it belongs to $\mathcal{D}(a)$ and verifies \eqref{PDE}.
\end{definition}

\begin{definition} \label{D417}
	A function $u: [0,T] \times E \rightarrow {\mathbbm R}$
	will be said 
	to be a {\bf martingale solution} of $Pseudo-PDE(f,g)$ if 
	$u\in\mathcal{D}(\mathfrak{a})$  and
	\begin{equation}\label{PDEextended}
	\left\{\begin{array}{rcl}
	\mathfrak{a}(u)&=& -f(\cdot,\cdot,u,\mathfrak{G}(u)^{\frac{1}{2}})\\
	u(T,\cdot)&=&g.
	\end{array}\right.
	\end{equation}
\end{definition}
We now fix couple of functions  
$f\in\mathcal{B}([0,T]\times E\times\mathbbm{R}\times\mathbbm{R},\mathbbm{R})$ and $g\in\mathcal{B}(E,\mathbbm{R})$.
Until the end of these preliminaries, we will  assume the following.
\begin{hypothesis}\label{Hpq}
\begin{enumerate}\
		\item $\forall (s,x)\in[0,T]\times E,\quad g(X_T)\in L^2(\mathbbm{P}^{s,x})$;
		\item $f(\cdot,\cdot,0,0)\in\mathcal{L}^2_X$;
		\item There exists $K^Y>0, K^Z>0$ such that for all $t,x,y,y',z,z'$, \\  $|f(t,x,y,z)-f(t,x,y',z')|\leq K^Y|y-y'|+K^Z|z-z'|$.
	\end{enumerate}
\end{hypothesis}

\begin{remark} \label{R234}
  If $f(\cdot,\cdot,0,0)$ and $g$ are bounded then properties
  1. and 2. above are satisfied.
\end{remark}

We conclude these preliminaries by stating the theorem of existence and uniqueness of a martingale solution for $Pseudo-PDE(f,g)$. It was the
object of Theorem 5.21 of \cite{paper1preprint}. 

\begin{theorem} \label{RMartExistenceUniqueness}
  Let $(\mathbbm{P}^{s,x})_{(s,x)\in[0,T]\times E}$ be a Markov canonical
  class associated to a transition function measurable 
in time (see Definitions \ref{defMarkov} and \ref{DefFoncTrans}) which
fulfills Hypothesis \ref{MPwellposed},
i.e. it is a solution of a well-posed Martingale Problem associated with
some $(\mathcal{D}(a),a)$.
Moreover assume that  Hypothesis \ref{Hpq} holds.

Then $Pseudo-PDE(f,g)$ has a unique martingale solution.
\end{theorem}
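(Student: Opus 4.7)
The plan is to use the FBSDE \eqref{BSDEIntro} as the bridge between the analytical object (a martingale solution) and a probabilistic fixed point. For every $(s,x)\in[0,T]\times E$, the companion paper provides, under $H^{mom}(\zeta,\eta)$ and $H^{lip}(\zeta,\eta)$, existence and uniqueness of a solution $(Y^{s,x},M^{s,x})$ of \eqref{BSDEIntro} in $\mathcal{L}^2(dV\otimes d\mathbbm{P}^{s,x})\times \mathcal{H}^2_0$ on the stochastic basis $(\Omega,\mathcal{F}^{s,x},(\mathcal{F}^{s,x}_t),\mathbbm{P}^{s,x})$, and Proposition \ref{H2V=H2} gives $d\langle M^{s,x}\rangle \ll dV$ for free.

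For existence I would set $u(s,x) := Y^{s,x}_s$ and use the Markov property of $(\mathbbm{P}^{s,x})$ to show that under $\mathbbm{P}^{s,x}$ one has $Y^{s,x}_t = u(t,X_t)$ a.s.\ for every $t \in [s,T]$: restarting \eqref{BSDEIntro} at time $t$ produces, by uniqueness of the FBSDE, a pair indistinguishable from $(Y^{t,X_t},M^{t,X_t})$, and evaluating at $t$ yields $Y^{s,x}_t = Y^{t,X_t}_t = u(t,X_t)$. Rewriting \eqref{BSDEIntro} then gives
\begin{equation*}
u(t,X_t)-u(s,x) = -\int_s^t f\!\left(r,X_r,u(r,X_r),\sqrt{\tfrac{d\langle M^{s,x}\rangle}{dV}}(r)\right)dV_r + M^{s,x}_t - M^{s,x}_s,
\end{equation*}
which already looks like the defining identity of $\mathcal{D}(\mathfrak{a})$. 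The delicate point is that the drift must be written as $\psi(r,X_r)$ for a single Borel function $\psi$ on $[0,T]\times E$ that does not depend on $(s,x)$. For this I would construct a Borel $v:[0,T]\times E \to \mathbbm{R}_+$ such that $\sqrt{d\langle M^{s,x}\rangle/dV}(r) = v(r,X_r)$ holds $dV\otimes d\mathbbm{P}^{s,x}$-a.e.\ simultaneously for every $(s,x)$, by combining the Markov restart argument above, Lemma \ref{ModifImpliesdV} and Proposition \ref{uniquenessupto}. Once $v$ is constructed, setting $\psi := -f(\cdot,\cdot,u,v)$ and applying Definition \ref{extended} together with Proposition \ref{P321} yields $u \in \mathcal{D}(\mathfrak{a})$, $\mathfrak{a}(u) = \psi$ and $\mathfrak{G}(u,u) = v^2$ up to zero potential, so $u$ is a martingale solution.

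For uniqueness, if $\tilde u$ is any martingale solution, I would fix $(s,x)$ and form $\tilde Y_t := \tilde u(t,X_t)$ together with $\tilde M := M[\tilde u]^{s,x}$ from Definition \ref{extended}. By Proposition \ref{P321} the density of $\langle \tilde M\rangle$ with respect to $V$ equals $\mathfrak{G}(\tilde u,\tilde u)(\cdot,X_\cdot)$, hence the driver appearing in \eqref{BSDEIntro} becomes $f(r,X_r,\tilde u(r,X_r),\sqrt{\mathfrak{G}(\tilde u,\tilde u)(r,X_r)}) = -\mathfrak{a}(\tilde u)(r,X_r)$; combined with the terminal condition $\tilde u(T,\cdot)=g$, this shows that $(\tilde Y,\tilde M)$ solves \eqref{BSDEIntro}. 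Uniqueness of the FBSDE then forces $\tilde u(s,x) = \tilde Y_s = Y^{s,x}_s = u(s,x)$ for every $(s,x)$.

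The main obstacle will be the Borel selection of $v$ in Step 2: producing a single deterministic representative of $\sqrt{d\langle M^{s,x}\rangle/dV}$ uniformly in $(s,x)$ and transferring the stochastic identity for $M^{s,x}$ into the extended-generator framework of Definition \ref{extended}. This is where the Markov structure of $(\mathbbm{P}^{s,x})$, the up-to-zero-potential uniqueness statements of Propositions \ref{uniquenessupto} and \ref{uniquenesspsi}, and the link between $\mathfrak{G}$ and angular brackets from Proposition \ref{P321}, all enter essentially.
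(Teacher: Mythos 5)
Your proposal follows essentially the same route as the paper: the paper does not reprove this theorem but imports it from the companion paper, whose argument (as summarized here in Theorems \ref{SolBSDE}, \ref{Defuv} and \ref{Representation}) is exactly your scheme --- solve $FBSDE^{s,x}(f,g)$ for each $(s,x)$, set $u(s,x):=Y^{s,x}_s$, use the Markov restart and the identifications $M^{s,x}=M[u]^{s,x}$ and $\frac{d\langle M^{s,x}\rangle}{dV}=\mathfrak{G}(u,u)(\cdot,X_\cdot)$ to place $u$ in $\mathcal{D}(\mathfrak{a})$, and derive uniqueness from uniqueness of the FBSDE. You also correctly isolate the genuinely delicate step (the Borel selection of a single $v$ representing $\sqrt{d\langle M^{s,x}\rangle/dV}$ uniformly in $(s,x)$), which is precisely the content of the companion paper's identification results that Theorem \ref{Defuv} records.
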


We also had shown (see Proposition 5.19 in 
  \cite{paper1preprint}) that the unique martingale solution is the only possible classical solution if there is one, as stated below.
\begin{proposition}\label{CoroClassic}
Under the conditions of previous Theorem \ref{RMartExistenceUniqueness}, 
a classical solution $u$ of $Pseudo-PDE(f,g)$ such that 
$\Gamma(u)\in\mathcal{L}^1_X$,  is also a martingale solution.	

Conversely, if $u$ is a martingale solution of $Pseudo-PDE(f,g)$  
belonging to $\mathcal{D}(a)$, then $u$ is a classical solution of $Pseudo-PDE(f,g)$ up to a zero-potential set, meaning that the first equality of \eqref{PDE} holds up to a set of zero potential. 
\end{proposition}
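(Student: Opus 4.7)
The proposition splits naturally into two implications, and for both the plan is to transfer identities between the classical pair $(a,\Gamma)$ and their extended counterparts $(\mathfrak{a},\mathfrak{G})$ using the coincidence statements already recalled in the preliminaries.

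For the direct implication, suppose $u\in\mathcal{D}(a)$ satisfies \eqref{PDE} with $\Gamma(u,u)\in\mathcal{L}^1_X$. The first step is to invoke Corollary \ref{RExtendedClassical}: the integrability of $\Gamma(u,u)$ is precisely the condition guaranteeing that $u\in\mathcal{D}(\mathfrak{a})$, together with the identifications $\mathfrak{a}(u)=a(u)$ and $\mathfrak{G}(u,u)=\Gamma(u,u)$ up to a zero-potential set. Substituting these two equalities into the first line of \eqref{PDE} yields
\begin{equation*}
\mathfrak{a}(u)=-f\!\left(\cdot,\cdot,u,\sqrt{\mathfrak{G}(u,u)}\right)
\end{equation*}
up to a zero-potential set, which is exactly the first line of the system \eqref{PDEextended}. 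Since the terminal condition $u(T,\cdot)=g$ is preserved unchanged, $u$ is a martingale solution in the sense of Definition \ref{D417}.

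For the converse, assume $u$ is a martingale solution and additionally $u\in\mathcal{D}(a)$. Since $u\in\mathcal{D}(\mathfrak{a})$ by definition of martingale solution, the process \eqref{E45} with $\psi=\mathfrak{a}(u)$ admits, under every $\mathbbm{P}^{s,x}$, a cadlag modification in $\mathcal{H}^2_{0}$, and therefore in $\mathcal{M}_{loc}$. On the other hand, because $u\in\mathcal{D}(a)$, Notation \ref{Mphi} ensures that the same property holds for the choice $\psi=a(u)$. The uniqueness statement in Proposition \ref{uniquenesspsi} then forces $a(u)=\mathfrak{a}(u)$ up to a zero-potential set. Similarly, since $u\in\mathcal{D}(a)\cap\mathcal{D}(\mathfrak{a})$, Proposition \ref{P321} yields $\Gamma(u,u)=\mathfrak{G}(u,u)$ up to a zero-potential set. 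Inserting these two identifications into \eqref{PDEextended} produces the first line of \eqref{PDE} up to a zero-potential set, while the terminal condition $u(T,\cdot)=g$ is inherited verbatim.

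There is no genuine obstacle here: the argument is essentially an algebraic substitution, and the only point worth checking is that the hypotheses in each direction are tailored exactly to the membership conditions required by Corollary \ref{RExtendedClassical}, Proposition \ref{uniquenesspsi} and Proposition \ref{P321}. In the first direction one cannot bypass the assumption $\Gamma(u,u)\in\mathcal{L}^1_X$, as it is precisely what upgrades a classical solution into the extended framework; in the second, the additional regularity $u\in\mathcal{D}(a)$ is what allows one to \emph{pull back} the extended identities to the classical operators via the uniqueness assertions.
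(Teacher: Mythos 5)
Your proof is correct: both directions amount to substituting the coincidences $a(u)=\mathfrak{a}(u)$ and $\Gamma(u,u)=\mathfrak{G}(u,u)$ (up to zero-potential sets), supplied by Corollary \ref{RExtendedClassical} in the direct implication and by Propositions \ref{uniquenesspsi} and \ref{P321} in the converse, into the respective equations \eqref{PDE} and \eqref{PDEextended}. The paper itself does not reprove this statement (it is quoted from Proposition 5.19 of the companion paper \cite{paper1preprint}), but your argument uses precisely the identification results recalled here for that purpose, so it follows the intended route.
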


\section{Decoupled mild solutions of Pseudo-PDEs}\label{S2}

All along this section we will consider a  Markov canonical class $(\mathbbm{P}^{s,x})_{(s,x)\in[0,T]\times E}$ associated to a transition function $p$ measurable in time (see Definitions \ref{defMarkov}, \ref{DefFoncTrans}) verifying Hypothesis   \ref{MPwellposed} for a certain $(\mathcal{D}(a),a)$.   We are also given a couple of functions   $f\in\mathcal{B}([0,T]\times E\times\mathbbm{R}\times\mathbbm{R},\mathbbm{R})$ and $g\in\mathcal{B}(E,\mathbbm{R})$ satisfying Hypothesis \ref{Hpq}.

\subsection{Definition}\label{mild}
 
As mentioned in the introduction, in this section we introduce
 a notion of  solution of 
our $Pseudo-PDE(f,g)$ that we will denominate {\it decoupled mild}, which 
 is a generalization of the mild solution concept for partial differential equation. 
We will show that such solution exists and is unique.  
Indeed, that function  will be the one appearing in Theorem \ref{Defuv}.


In what follows, we will be interested in functions $(f,g)$ which satisfy weaker
 conditions than Hypothesis \ref{Hpq}
  namely the following ones.
\begin{hypothesis}\label{Hpqeq}
There exists $C>0$ such that the following holds.
	\begin{enumerate}
		\item $\forall (s,x)\in[0,T]\times E,\quad g(X_T)\in L^2(\mathbbm{P}^{s,x})$;
		\item $f(\cdot,\cdot,0,0)\in\mathcal{L}^2_X$;
		\item $\forall (t,x,y,z):\quad  |f(t,x,y,z)|\leq |f(t,x,0,0)| + C(|y|+|z|)$.
	\end{enumerate}
\end{hypothesis}
\begin{notation}
Let $s,t$ in $[0,T]$ with $s\leq t$, $x\in E$ and $\phi\in \mathcal{B}(E,\mathbbm{R})$, if the expectation $\mathbbm{E}^{s,x}[|\phi(X_t)|]$ is finite, then $P_{s,t}[\phi](x)$ will denote $\mathbbm{E}^{s,x}[\phi(X_t)]$ .
\end{notation}

We recall two important measurability properties.
\begin{remark} Let $\phi\in\mathcal{B}(E,\mathbbm{R})$.
\begin{itemize}
\item Suppose  that for any $(s,x,t)$, 
$\mathbbm{E}^{s,x}[|\phi(X_t)|]<\infty$ then by Proposition \ref{measurableint}, $(s,x,t)\longmapsto P_{s,t}[\phi](x)$ is Borel.
\item Suppose that for every $(s,x)$, $\mathbbm{E}^{s,x}[\int_s^{T}|\phi(X_r)|dr]<\infty$. Then by Lemma \ref{LemmaBorel},
$(s,x)\longmapsto\int_s^TP_{s,r}[\phi](x)dr$ is Borel.
\end{itemize}
\end{remark}

In our  general setup, considering some operator $a$, the equation 
\begin{equation}
a(u) + f\left(\cdot,\cdot,u,\Gamma(u)^{\frac{1}{2}}\right)=0,
\end{equation}
 can be naturally  decoupled into 
\begin{equation}
\left\{
\begin{array}{rcl}
a(u) &=& - f(\cdot,\cdot,u,v)\\
\Gamma(u) &=&  v^2.
\end{array}\right.
\end{equation}
Since $\Gamma(u)=a(u^2)-2ua(u)$, this system of equation will be rewritten as
\begin{equation}
\left\{
\begin{array}{rcl}
a(u) &=& - f(\cdot,\cdot,u,v)\\
a(u^2) &=& v^2 - 2uf(\cdot,\cdot,u,v).
\end{array}\right.
\end{equation}
On the other hand our Markov process $X$ is time non-homogeneous, which  leads us to the definition of a decoupled mild solution.

\begin{definition}\label{mildsoluv}
Let $(f,g)$ be a couple verifying Hypothesis \ref{Hpqeq}. 
\\
Let $u,v\in\mathcal{B}([0,T]\times E,\mathbbm{R})$ be two Borel functions with $v\geq 0$.
\begin{enumerate}
\item The couple $(u,v)$ will be called {\bf solution
of the identification problem determined by $(f,g)$}
or simply {\bf  solution of } 
$IP(f,g)$ if  $u$ and $v$ belong to $\mathcal{L}^2_X$ and if for every $(s,x)\in[0,T]\times E$,
\begin{equation}\label{MildEq}
\left\{
    \begin{array}{rcl}
    u(s,x)&=&P_{s,T}[g](x)+\int_s^TP_{s,r}\left[f\left(r,\cdot,u(r,\cdot),v(r,\cdot)\right)\right](x)dr\\
    u^2(s,x) &=&P_{s,T}[g^2](x) -\int_s^TP_{s,r}\left[v^2(r,\cdot)-2uf\left(r,\cdot,u(r,\cdot),v(r,\cdot)\right)\right](x)dr.
    \end{array}\right.
\end{equation}
\item The function $u$ will be called  {\bf decoupled mild solution}
of $Pseudo-PDE(f,g)$ if there is a function $v$
such that the couple $(u,v)$ is a solution 
of $IP(f,g)$.
\end{enumerate}
\end{definition}

\begin{lemma}\label{LemmaMild}
	Let $u,v\in\mathcal{L}^2_X$, and let $f$ be a Borel function satisfying Hypothesis \ref{Hpqeq}, then $f\left(\cdot,\cdot,u,v\right)$ belongs to $\mathcal{L}_X^2$ and $uf\left(\cdot,\cdot,u,v\right)$ to $\mathcal{L}^1_X$. 
\end{lemma}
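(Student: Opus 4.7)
\begin{prooff}{}
The plan is to bound $|f(\cdot,\cdot,u,v)|$ pointwise using the growth condition, then apply linearity of the expectation together with Hypothesis $H^{mom}(\zeta,\eta)$ and the assumption $u,v\in\mathcal{L}^2_X$ to dispose of each resulting term. For the second claim, I will reduce to the first via Cauchy--Schwarz.

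First I fix $(s,x)\in[0,T]\times E$. By the second item of $H^{growth}(\zeta,\eta)$, for every $(r,\omega)$,
\begin{equation*}
|f(r,X_r,u(r,X_r),v(r,X_r))|\leq C'\bigl(1+\eta(X_r)+|u(r,X_r)|+|v(r,X_r)|\bigr).
\end{equation*}
Squaring and applying the elementary inequality $(a+b+c+d)^2\leq 4(a^2+b^2+c^2+d^2)$ yields
\begin{equation*}
|f(r,X_r,u(r,X_r),v(r,X_r))|^2\leq 4(C')^2\bigl(1+\eta^2(X_r)+u^2(r,X_r)+v^2(r,X_r)\bigr).
\end{equation*}
Integrating against $dV_r$ over $[s,T]$ and taking $\mathbbm{E}^{s,x}$, the constant term contributes $V_T-V_s<\infty$ since $V$ is continuous on the compact interval $[0,T]$; the $\eta^2$ term is finite by Hypothesis $H^{mom}(\zeta,\eta)$; and the $u^2,v^2$ terms are finite because $u,v\in\mathcal{L}^2_X$. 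This gives $f(\cdot,\cdot,u,v)\in\mathcal{L}^2_{s,x}$, and since $(s,x)$ was arbitrary, $f(\cdot,\cdot,u,v)\in\mathcal{L}^2_X$.

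For the second statement, I apply the Cauchy--Schwarz inequality with respect to the positive measure $dV_r\otimes d\mathbbm{P}^{s,x}$:
\begin{equation*}
\mathbbm{E}^{s,x}\!\left[\int_s^T |u(r,X_r)f(r,X_r,u(r,X_r),v(r,X_r))|\,dV_r\right]\leq \|u\|_{2,s,x}\,\|f(\cdot,\cdot,u,v)\|_{2,s,x},
\end{equation*}
which is finite by the first part together with $u\in\mathcal{L}^2_X$. Thus $uf(\cdot,\cdot,u,v)\in\mathcal{L}^1_{s,x}$ for every $(s,x)$, hence belongs to $\mathcal{L}^1_X$. No real obstacle arises: the proof is a routine majorization, the only point worth checking being that $V_T<\infty$ so that the constant term in the growth bound is harmless.
\end{prooff}
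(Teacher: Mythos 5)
Your proof is correct and follows essentially the same route as the paper: bound $f^2(\cdot,\cdot,u,v)$ pointwise via the growth condition and dispose of each term using $H^{mom}(\zeta,\eta)$ and $u,v\in\mathcal{L}^2_X$. The only cosmetic difference is that you handle $uf(\cdot,\cdot,u,v)$ by Cauchy--Schwarz where the paper uses $2|uf(\cdot,\cdot,u,v)|\leq u^2+f^2(\cdot,\cdot,u,v)$, which amounts to the same estimate.
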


\begin{proof}
	Thanks to the growth condition on $f$ in Hypothesis \ref{Hpqeq}, there exists a constant $C>0$ such that for any $(s,x)\in[0,T]\times E$,
	\begin{equation}\label{EqUniqueness}
	\begin{array}{rcl}
	&& \mathbbm{E}^{s,x}\left[\int_t^T f^2(r,X_r,u(r,X_r),v(r,X_r))dr\right]\\
	&\leq& C\mathbbm{E}^{s,x}\left[\int_t^T(f^2(r,X_r,0,0) +u^2(r,X_r)+v^2(r,X_r))dr\right]
	< \infty,
	\end{array}
	\end{equation}
	since we have assumed that $u^2,v^2$ belong to $\mathcal{L}^1_X$, and thanks to Hypothesis \ref{Hpqeq}. This means that $f^2\left(\cdot,\cdot,u,v\right)$  belongs to $\mathcal{L}^1_X$. Since $2\left|uf\left(\cdot,\cdot,u,v\right)\right|\leq u^2+f^2\left(\cdot,\cdot,u,v\right)$  then  $uf\left(\cdot,\cdot,u,v\right)$  also belongs to $\mathcal{L}^1_X$.
\end{proof}
\begin{remark} Consequently,
	under the assumptions of Lemma \ref{LemmaMild} 
all the terms in \eqref{MildEq} make sense.
\end{remark}

\subsection{Existence and uniqueness of a solution}\label{mart-mild}

\begin{proposition}\label{MartingaleImpliesMild} 
Assume that $(f,g)$ verifies Hypothesis \ref{Hpqeq} and let $u\in\mathcal{L}^2_X$ be a martingale solution of $Pseudo-PDE(f,g)$. 
Then $(u,\mathfrak{G}(u))$ is a
 solution of $IP(f,g)$ and in particular, $u$ is a decoupled mild solution of $Pseudo-PDE(f,g)$.
\end{proposition}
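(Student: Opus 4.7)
The strategy is to derive the two equations of \eqref{MildEq} from the defining martingale decomposition of $u$ on $[s,T]$ under $\mathbbm{P}^{s,x}$: for the first, take $\mathbbm{E}^{s,x}$ of the decomposition directly; for the second, square it via It\^o's formula and then take $\mathbbm{E}^{s,x}$. In both cases the carr\'e du champs enters through $\langle M[u]^{s,x}\rangle=\int_s^\cdot \mathfrak{G}(u,u)(r,X_r)dV_r$ supplied by Proposition \ref{P321}.

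First, set $v:=\sqrt{\mathfrak{G}(u,u)}$, which is well defined up to zero potential since $\langle M[u]^{s,x}\rangle$ is nondecreasing for every $(s,x)$. By Definition \ref{domainextended} one has $M[u]^{s,x}\in\mathcal{H}^2_0$, hence $\mathbbm{E}^{s,x}[\langle M[u]^{s,x}\rangle_T]<\infty$, and Proposition \ref{P321} yields $v\in\mathcal{L}^2_X$. Lemma \ref{LemmaMild} then gives $f(\cdot,\cdot,u,v)\in\mathcal{L}^2_X$ and $uf(\cdot,\cdot,u,v)\in\mathcal{L}^1_X$, so all integrals below and the Fubini swaps between $\mathbbm{E}^{s,x}$ and $dV_r$ are legitimate. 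For the first identity of \eqref{MildEq}, Definition \ref{extended} combined with $\mathfrak{a}(u)=-f(\cdot,\cdot,u,v)$ (an equality in $L^0_X$, but by Proposition \ref{uniquenessupto} enough to ensure $\mathbbm{P}^{s,x}$-a.s.\ equality of the corresponding $dV_r$-integrals) yields $\mathbbm{P}^{s,x}$-a.s.
\begin{equation*}
u(T,X_T)=u(s,x)-\int_s^T f(r,X_r,u(r,X_r),v(r,X_r))dV_r+M[u]^{s,x}_T.
\end{equation*}
Taking $\mathbbm{E}^{s,x}$, using $\mathbbm{E}^{s,x}[M[u]^{s,x}_T]=0$, the terminal condition $u(T,\cdot)=g$, and Fubini produces the first equation of \eqref{MildEq}.

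For the second identity, denote by $\tilde N$ the cadlag $\mathbbm{P}^{s,x}$-modification of $u(\cdot,X_\cdot)$ on $[s,T]$ furnished by Definition \ref{domainextended}, so $\tilde N=u(s,x)+A+M[u]^{s,x}$ with continuous finite-variation part $A:=\int_s^\cdot\mathfrak{a}(u)(r,X_r)dV_r$. It\^o's formula applied to $x\mapsto x^2$ gives
\begin{equation*}
\tilde N_T^2-u^2(s,x)=2\int_s^T \tilde N_{r-}dA_r+2\int_s^T \tilde N_{r-}dM[u]^{s,x}_r+[M[u]^{s,x}]_T,
\end{equation*}
the continuity of $A$ killing both $[A]$ and $[A,M[u]^{s,x}]$. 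Because $V$ is continuous and $\tilde N$ has only countably many jumps, $\tilde N_{r-}=\tilde N_r=u(r,X_r)$ holds $dV\otimes d\mathbbm{P}^{s,x}$-a.e.\ (Lemma \ref{ModifImpliesdV}), so the $dA$-integral equals $-\int_s^T uf(r,X_r,u,v)dV_r$. Taking $\mathbbm{E}^{s,x}$ and using $\tilde N_T=g(X_T)$ $\mathbbm{P}^{s,x}$-a.s.\ together with $\mathbbm{E}^{s,x}[[M[u]^{s,x}]_T]=\mathbbm{E}^{s,x}[\langle M[u]^{s,x}\rangle_T]=\int_s^T P_{s,r}[v^2](x)dV_r$ produces the second equation of \eqref{MildEq}.

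The main technical obstacle is that $\int \tilde N_{r-}dM[u]^{s,x}_r$ is only a local martingale a priori, since there is no reason for $u^2\mathfrak{G}(u,u)$ to lie in $\mathcal{L}^1_X$, so one cannot directly conclude its expectation at $T$ vanishes. I would treat this through a standard localizing sequence $\tau_n\uparrow T$ turning the stopped integral into a true martingale, then pass to the limit on both sides. Dominated convergence of the LHS is supplied by the bound $\sup_{[s,T]}|\tilde N|\leq|u(s,x)|+\int_s^T|f(\cdot,\cdot,u,v)|dV_r+\sup_{[s,T]}|M[u]^{s,x}|$, whose square is $\mathbbm{P}^{s,x}$-integrable via Cauchy--Schwarz against $V_T$ (using $f(\cdot,\cdot,u,v)\in\mathcal{L}^2_X$) and Doob's $L^2$ inequality on $M[u]^{s,x}\in\mathcal{H}^2_0$; monotone convergence handles $[M[u]^{s,x}]_{T\wedge\tau_n}\uparrow[M[u]^{s,x}]_T$, and dominated convergence handles the drift integral via $uf\in\mathcal{L}^1_X$.
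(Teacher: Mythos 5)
Your proof is correct and follows essentially the same route as the paper: take expectations of the semimartingale decomposition of $u(\cdot,X_\cdot)$ for the first identity, then apply integration by parts/It\^o to its square, replace $[M[u]^{s,x}]$ by $\langle M[u]^{s,x}\rangle$ under the expectation, and use Proposition \ref{P321} and Fubini for the second. The only cosmetic difference is that you justify the true-martingale property of $\int U^{s,x}_{r^-}dM[u]^{s,x}_r$ by an explicit localization and convergence argument, whereas the paper derives the same $L^2$ bound on $\sup_t|U^{s,x}_t|$ and then invokes Lemma 3.15 of the companion paper.
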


\begin{proof}
Let $u\in\mathcal{L}^2_X$ be a martingale solution of $Pseudo-PDE(f,g)$. We emphasize that, taking Definition \ref{domainextended} and Proposition \ref{P321} into account, $\mathfrak{G}(u)$ belongs to $\mathcal{L}^1_X$, 
or equivalently that $\mathfrak{G}( u)^{\frac{1}{2}}$ belongs to $\mathcal{L}^2_X$.  By Lemma \ref{LemmaMild}, it follows that
 $f\left(\cdot,\cdot,u,\mathfrak{G}( u)^{\frac{1}{2}}\right)\in\mathcal{L}^2_X$ and $uf\left(\cdot,\cdot,u,\mathfrak{G}( u)^{\frac{1}{2}}\right)\in\mathcal{L}^1_X$.
\\
We fix some $(s,x)\in[0,T]\times E$ and the corresponding probability $\mathbbm{P}^{s,x}$. We are going to show that 
\begin{equation} \label{MildEqAux}
\left\{
\begin{array}{rcl}
u(s,x)&=&P_{s,T}[g](x)+\int_s^TP_{s,r}\left[f\left(r,\cdot,u(r,\cdot),\mathfrak{G}( u)^{\frac{1}{2}}(r,\cdot)\right)\right](x)dr\\
u^2(s,x) &=&P_{s,T}[g^2](x) -\int_s^TP_{s,r}\left[\mathfrak{G}(u)(r,\cdot)-2uf\left(r,\cdot,u(r,\cdot),\mathfrak{G}( u)^{\frac{1}{2}}(r,\cdot)\right)\right](x)dr.
\end{array}\right.
\end{equation}
Combining Definitions \ref{domainextended}, \ref{extended}, \ref{D417}, we know that on $[s,T]$, the process $u(\cdot,X_{\cdot})$ has a c\`adl\`ag 
modification which we denote $U^{s,x}$ which is a special semimartingale with decomposition
\begin{equation}\label{decompoU}
	U^{s,x}=u(s,x)-\int_s^{\cdot}f\left(\cdot,\cdot,u,\mathfrak{G}( u)^{\frac{1}{2}}\right)(r,X_r)dr +M[u]^{s,x},
\end{equation}
where $M[u]^{s,x}\in\mathcal{H}^2_0$.
Definition \ref{D417} also states that $u(T,\cdot)=g$, implying that
\begin{equation}
u(s,x)=g(X_T)+\int_s^Tf\left(\cdot,\cdot,u,\mathfrak{G}( u)^{\frac{1}{2}}\right)(r,X_r)dr -M[u]^{s,x}_T\text{ a.s.}
\end{equation}
Taking the expectation, by Fubini's theorem we get
\begin{equation}
\begin{array}{rcl}
u(s,x) &=&\mathbbm{E}^{s,x}\left[g(X_T) +\int_s^T f\left(\cdot,\cdot,u,\mathfrak{G}( u)^{\frac{1}{2}}\right)(r,X_r)dr\right]\\
&=&P_{s,T}[g](x)+\int_s^TP_{s,r}\left[f\left(r,\cdot,u(r,\cdot),\mathfrak{G}( u)^{\frac{1}{2}}(r,\cdot)\right)\right](x)dr.
\end{array}
\end{equation}
By integration by parts,  we obtain
\begin{equation}
	d(U^{s,x})^2_t=-2U^{s,x}_tf\left(\cdot,\cdot,u,\mathfrak{G}( u)^{\frac{1}{2}}\right)(t,X_t)dt+2U^{s,x}_{t^-}dM[u]^{s,x}_t +d[M[u]^{s,x}]_t,
\end{equation} 
so integrating from $s$ to $T$, we get
\begin{equation}
\begin{array}{rcl}\label{Eq322}
&&u^2(s,x)\\
&=& g^2(X_T)+2\int_s^TU_r^{s,x}f\left(\cdot,\cdot,u,\mathfrak{G}( u)^{\frac{1}{2}}\right)(r,X_r)dr -2\int_s^TU^{s,x}_{r^-}dM[u]^{s,x}_r -[M[u]^{s,x}]_T\\
&=&g^2(X_T)+2\int_s^Tuf\left(\cdot,\cdot,u,\mathfrak{G}( u)^{\frac{1}{2}}\right)(r,X_r)dr -2\int_s^TU^{s,x}_{r^-}dM[u]^{s,x}_r -[M[u]^{s,x}]_T,
\end{array}
\end{equation}
where the latter line is a consequence of Lemma \ref{ModifImpliesdV}.
The next step will consist in taking the expectation in equation \eqref{Eq322}, but before, we will check that $\int_s^{\cdot}U^{s,x}_{r^-}dM[u]^{s,x}_r$ is a martingale.
Thanks to \eqref{decompoU} and Jensen's inequality, there exists a constant $C>0$ such that 
\begin{equation}
	\underset{t\in[s,T]}{\text{sup }}(U^{s,x}_t)^2\leq C\left(\int_s^Tf^2\left(\cdot,\cdot,u,\mathfrak{G}( u)^{\frac{1}{2}}\right)(r,X_r)dr +\underset{t\in[s,T]}{\text{sup }}(M[u]^{s,x}_t)^2\right).
\end{equation}
Since $M[u]^{s,x}\in\mathcal{H}^2_0$ and $f\left(\cdot,\cdot,u,\mathfrak{G}( u)^{\frac{1}{2}}\right)\in\mathcal{L}^2_X$, 
it follows that $\underset{t\in[s,T]}{\text{sup }}(U^{s,x}_t)^2\in L^1(\mathbbm{P}^{s,x})$ and Lemma 3.15 in
\cite{paper1preprint}
states that $\int_s^{\cdot}U^{s,x}_{r^-}dM[u]^{s,x}_r$ is a $\mathbbm{P}^{s,x}$-martingale.
Taking the expectation in \eqref{Eq322}, we now obtain
\begin{equation}
\begin{array}{rcl}
u^2(s,x) &=& \mathbbm{E}^{s,x}\left[g^2(X_T) +\int_s^T 2uf\left(\cdot,\cdot,u,\mathfrak{G}( u)^{\frac{1}{2}}\right)(r,X_r)dr-\left[M[u]^{s,x}\right]_T\right]\\
&=& \mathbbm{E}^{s,x}\left[g^2(X_T) +\int_s^T 2uf\left(\cdot,\cdot,u,\mathfrak{G}( u)^{\frac{1}{2}}\right)(r,X_r)dr-\langle M[u]^{s,x}\rangle_T\right]\\
&=&\mathbbm{E}^{s,x}\left[g^2(X_T)\right]- \mathbbm{E}^{s,x}\left[\int_s^T \left(\mathfrak{G}(u)-2uf\left(\cdot,\cdot,u,\mathfrak{G}( u)^{\frac{1}{2}}\right)\right)(r,X_r)dr\right]\\
&=& P_{s,T}[g^2](x) -\int_s^TP_{s,r}\left[\mathfrak{G}(u)(r,\cdot)-2u(r,\cdot)f\left(r,\cdot,u(r,\cdot),\mathfrak{G}( u)^{\frac{1}{2}}(r,\cdot)\right)\right](x)dr,
\end{array}
\end{equation}
where the third equality derives from Proposition \ref{P321} and the fourth from Fubini's theorem.
This concludes the proof.
\end{proof}

We now show the converse result of Proposition \ref{MartingaleImpliesMild}.

\begin{proposition}\label{MildImpliesMartingale}  
Assume that $(f,g)$ verifies Hypothesis \ref{Hpqeq}. 
Every decoupled mild solution of $Pseudo-PDE(f,g)$ is a also a martingale solution.   Moreover, if $(u,v)$ solves $IP(f,g)$, then $v^2=\mathfrak{G}(u)$ (up to zero potential sets).
\end{proposition}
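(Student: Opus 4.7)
The strategy is to reverse the chain of reasoning of Proposition \ref{MartingaleImpliesMild}: starting from a pair $(u,v)$ satisfying the two lines of \eqref{MildEq}, I will extract the martingale structure, identify the extended generator $\mathfrak{a}(u)$ and the extended carr\'e du champs $\mathfrak{G}(u,u)$, and then conclude. The terminal condition $u(T,\cdot)=g$ is immediate from the first line of \eqref{MildEq} applied at $s=T$, since $P_{T,T}$ is the identity on $E$ and the $dV_r$ integral vanishes.

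Fix $(s,x)\in[0,T]\times E$ and consider the process
\[ N_t := u(t,X_t) + \int_s^t f\bigl(r,X_r,u(r,X_r),v(r,X_r)\bigr)\,dV_r, \qquad t\in[s,T]. \]
Applying the first line of \eqref{MildEq} at $(t,\cdot)$, together with the Markov property and Chapman--Kolmogorov, I expect a direct computation to yield $\mathbbm{E}^{s,x}[N_t\mid\mathcal{F}^{s,x}_\sigma]=N_\sigma$ for $s\le\sigma\le t\le T$, i.e. $N$ is a $\mathbbm{P}^{s,x}$-martingale in its raw form. Right-continuity and completeness of $(\mathcal{F}^{s,x}_t)$ provide a cadlag modification $\widetilde N$. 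Next I verify $\widetilde N\in\mathcal{H}^2$: since $\widetilde N_T = g(X_T)+\int_s^T f(r,X_r,u,v)\,dV_r$, the growth condition $H^{growth}(\zeta,\eta)$, Hypothesis $H^{mom}(\zeta,\eta)$, and Lemma \ref{LemmaMild} (bounding $(\int_s^T f\,dV_r)^2$ via Jensen by $V(T)\int_s^T f^2\,dV_r$) give $\widetilde N_T\in L^2(\mathbbm{P}^{s,x})$, and Doob's maximal inequality upgrades this to $\sup_{t\in[s,T]}\widetilde N_t^2\in L^1$. Therefore $\widetilde N - u(s,x)$ is a cadlag element of $\mathcal{H}^2_0$ which is a modification of the process in \eqref{E45} with $\phi=u$ and $\psi=-f(\cdot,\cdot,u,v)$. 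Definition \ref{domainextended} then yields $u\in\mathcal{D}(\mathfrak{a})$, and Proposition \ref{uniquenesspsi} identifies $\mathfrak{a}(u)=-f(\cdot,\cdot,u,v)$ up to zero potential and $M[u]^{s,x}=\widetilde N - u(s,x)$. In particular the cadlag modification $U$ of $u(\cdot,X_\cdot)$ satisfies $U_t = u(s,x) - \int_s^t f(r,X_r,u,v)\,dV_r + M[u]^{s,x}_t$.

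To identify $v^2$ with $\mathfrak{G}(u,u)$, I apply the same Markov/Chapman--Kolmogorov argument to the second line of \eqref{MildEq}: the process $u^2(t,X_t) - \int_s^t(v^2-2uf)(r,X_r)\,dV_r$ is a $\mathbbm{P}^{s,x}$-martingale on $[s,T]$. On the other hand, integration by parts applied to $U^2$ gives
\[ U_t^2 = u^2(s,x) - 2\int_s^t u\,f(r,X_r,u,v)\,dV_r + 2\int_s^t U_{r^-}\,dM[u]^{s,x}_r + [M[u]^{s,x}]_t, \]
where Lemma \ref{ModifImpliesdV} has been used to replace $U_r$ by $u(r,X_r)$ under $dV_r$. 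Since $\sup_t U_t^2\in L^1$ by the previous paragraph and $f(\cdot,\cdot,u,v)\in\mathcal{L}^2_X$, the argument of Lemma 3.15 of \cite{paper1preprint} (already invoked in Proposition \ref{MartingaleImpliesMild}) ensures that $\int_s^\cdot U_{r^-}\,dM[u]^{s,x}_r$ is a true martingale. Noting that $U^2$ is a cadlag modification of $u^2(\cdot,X_\cdot)$ and subtracting the two martingales, the process $[M[u]^{s,x}]_t - \int_s^t v^2(r,X_r)\,dV_r$ is a local martingale; substracting in addition the local martingale $[M[u]^{s,x}]_t - \langle M[u]^{s,x}\rangle_t$, I obtain a predictable, finite-variation local martingale starting at zero, which is therefore identically zero. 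Combined with Proposition \ref{P321}, this yields $\int_s^\cdot\bigl(\mathfrak{G}(u,u)-v^2\bigr)(r,X_r)\,dV_r\equiv 0$ under every $\mathbbm{P}^{s,x}$, so Proposition \ref{uniquenessupto} gives $v^2=\mathfrak{G}(u,u)$ up to zero potential. Using $v\ge 0$, the identity $\mathfrak{a}(u)=-f(\cdot,\cdot,u,v)$ becomes $\mathfrak{a}(u)=-f(\cdot,\cdot,u,\sqrt{\mathfrak{G}(u,u)})$, and $u$ is a martingale solution in the sense of Definition \ref{D417}. The main technical point, in my view, is the clean production of the $\mathcal{H}^2_0$ cadlag modification of $N$; once $u\in\mathcal{D}(\mathfrak{a})$ is secured, the remainder is essentially the bookkeeping dual to Proposition \ref{MartingaleImpliesMild}.
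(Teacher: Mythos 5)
Your proposal is correct and follows essentially the same route as the paper: the terminal condition from the first mild equation at $s=T$, the martingale property of $N$ via the Markov property, square integrability to place the compensated process in $\mathcal{H}^2_0$ and identify $\mathfrak{a}(u)=-f(\cdot,\cdot,u,v)$, then the second mild equation versus the integration-by-parts decomposition of $U^2$ to extract $\langle M[u]^{s,x}\rangle=\int_s^\cdot v^2(r,X_r)\,dV_r$ and conclude by Propositions \ref{P321} and \ref{uniquenessupto}. The only cosmetic differences are that you use Doob's inequality on $N_T$ where the paper uses Jensen on the conditional expectation, and you spell out the ``predictable finite-variation local martingale starting at zero vanishes'' argument where the paper cites uniqueness of the special semimartingale decomposition.
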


\begin{proof} 
Let $u$ and $v\geq 0$  be a couple of functions in $\mathcal{L}^2_X$ verifying \eqref{MildEq}.  We first note that, the first line of \eqref{MildEq} with $s=T$, gives $u(T,\cdot)=g$. 
We fix $(s,x)\in[0,T]\times E$ and the associated probability $\mathbbm{P}^{s,x}$, and on $[s,T]$,
  we set $U_t:=u(t,X_t)$ and $N_t:=u(t,X_t)-u(s,x)+\int_s^tf(r,X_r,u(r,X_r),v(r,X_r))dr$.

Combining the first line of \eqref{MildEq} applied in $(s,x)=(t,X_t)$ and the Markov property \eqref{Markov3}, and since $f\left(\cdot,\cdot,u,v\right)$ belongs to $\mathcal{L}^2_X$ (see Lemma \ref{LemmaMild}) we get the a.s. equalities
\begin{equation}
    \begin{array}{rcl}
     U_t &=& u(t,X_t) \\
     &=& P_{t,T}[g](X_t) + \int_t^TP_{t,r}\left[f\left(r,\cdot,u(r,\cdot),v(r,\cdot)\right)\right](X_t)dr\\
     &=& \mathbbm{E}^{t,X_t}\left[g(X_T)+\int_t^T f(r,X_r,u(r,X_r),v(r,X_r))dr\right] \\
     &=& \mathbbm{E}^{s,x}\left[g(X_T)+\int_t^T f(r,X_r,u(r,X_r),v(r,X_r))dr|\mathcal{F}_t\right],
    \end{array}
\end{equation}
from which we deduce that
$N_t=\mathbbm{E}^{s,x}\left[g(X_T)+\int_s^T f(r,X_r,u(r,X_r),v(r,X_r))dr|\mathcal{F}_t\right]-u(s,x)$ a.s.
 So $N$  is a $\mathbbm{P}^{s,x}$-martingale. We can therefore consider on $[s,T]$ and under $\mathbbm{P}^{s,x}$, $N^{s,x}$ the c\`adl\`ag version of $N$, and  the special semi-martingale 
\\
$U^{s,x} := u(s,x)- \int_s^{\cdot} f(r,X_r,u(r,X_r),v(r,X_r))dr + N^{s,x}$ which is a c\`adl\`ag  version of $U$.
By Jensen's inequality  for both expectation and
 conditional expectation, we have
\begin{equation}
\begin{array}{rcl}
\mathbbm{E}^{s,x}[(N^{s,x})^2_t]&=&\mathbbm{E}^{s,x}\left[\left(\mathbbm{E}^{s,x}\left[g(X_T)+\int_s^T f(r,X_r,u(r,X_r),v(r,X_r))dr|\mathcal{F}_t\right]-u(s,x)\right)^2\right]\\
&\leq& 3u^2(s,x) + 3\mathbbm{E}^{s,x}[g^2(X_T)] + 3\mathbbm{E}^{s,x}\left[\int_s^T f^2(r,X_r,u(r,X_r),v(r,X_r))dr\right]\\
&<& \infty,
\end{array}
\end{equation}
where the  second term is finite because of Hypothesis \ref{Hpqeq}, and the same  also holds for  the third one because $f\left(\cdot,\cdot,u,v\right)$ belongs to $\mathcal{L}^2_X$, see Lemma \ref{LemmaMild}. So $N^{s,x}$ is square integrable. We have therefore shown that under any $\mathbbm{P}^{s,x}$, the process $u(\cdot,X_{\cdot})-u(s,x)+\int_s^{\cdot}f(r,X_r,u(r,X_r),v(r,X_r))dr$ has on $[s,T]$ a modification in $\mathcal{H}^2_0$.  Definitions \ref{domainextended} and \ref{extended}, justify  that $u\in\mathcal{D}(\mathfrak{a})$, 
$\mathfrak{a}(u)=-f(\cdot,\cdot,u,v)$ and that for any $(s,x)\in[0,T]\times E$, $M[u]^{s,x}=N^{s,x}$. 

To conclude that $u$ is a martingale solution of $Pseudo-PDE(f,g)$, there is left to show that $\mathfrak{G}(u)=v^2$, up to zero potential sets. By Proposition \ref{P321}, this is equivalent to show that for every $(s,x)\in[0,T]\times E$,  
$\langle N^{s,x}\rangle = \int_s^{\cdot}v^2(r,X_r)dr$, 
in the sense of $\mathbbm{P}^{s,x}$-indistinguishability. 

We fix again $(s,x)\in[0,T]\times E$ and the associated probability, and  now set 
\\
$N'_t:=u^2(t,X_t)-u^2(s,x)-\int_s^t(v^2-2uf(\cdot,\cdot,u,v))(r,X_r)dr$. Combining the second line of \eqref{MildEq} applied in $(s,x)=(t,X_t)$ and the Markov property \eqref{Markov3}, and since $v^2,uf\left(\cdot,\cdot,u,v\right)$ belong to $\mathcal{L}^1_X$ (see Lemma \ref{LemmaMild}) we get the a.s. equalities
\begin{equation}
    \begin{array}{rcl}
     u^2(t,X_t) &=& P_{t,T}[g^2](X_t) - \int_t^TP_{t,r}\left[(v^2(r,\cdot)-2u(r,\cdot)f\left(r,\cdot,u(r,\cdot),v(r,\cdot)\right))\right](X_t)dr\\
     &=& \mathbbm{E}^{t,X_t}\left[g^2(X_T)-\int_t^T(v^2-2uf(\cdot,\cdot,u,v))(r,X_r)dr\right] \\
     &=& \mathbbm{E}^{s,x}\left[g^2(X_T)-\int_t^T(v^2- 2uf(\cdot,\cdot,u,v))(r,X_r)dr|\mathcal{F}_t\right],
    \end{array}
\end{equation}
from which we deduce that for any $t\in[s,T]$,
$$ N'_t=\mathbbm{E}^{s,x}\left[g^2(X_T)-\int_s^T (v^2- uf(\cdot,\cdot,u,v))(r,X_r)dr|\mathcal{F}_t\right]-u^2(s,x) \ {\rm a.s.}$$
 So $N'$  is a $\mathbbm{P}^{s,x}$-martingale. We can therefore consider on $[s,T]$ and under $\mathbbm{P}^{s,x}$, $N'^{s,x}$ the c\`adl\`ag version of $N'$. 

 The process $u^2(s,x)+ \int_s^{\cdot}(v^2-uf(\cdot,\cdot,u,v))(r,X_r)dr + N'^{s,x}$ is therefore a c\`adl\`ag special semi-martingale which is a $\mathbbm{P}^{s,x}$-version of $u^2(\cdot,X)$ on $[s,T]$. But we also had shown that 
 $U^{s,x}=u(s,x)- \int_s^{\cdot} f(r,X_r,u(r,X_r),v(r,X_r))dr + N^{s,x},$ is a version of $u(\cdot,X),$ which by integration by parts implies that 
 $$ u^2(s,x)-2\int_s^{\cdot}U^{s,x}_rf(\cdot,\cdot,u,v)(r,X_r)dr+2\int_s^{\cdot}U^{s,x}_{r^-}dN^{s,x}_r+[N^{s,x}],$$
 is another c\`adl\`ag  semi-martingale which is a $\mathbbm{P}^{s,x}$-version of $u^2(\cdot,X)$ on $[s,T]$. 
\\
$\int_s^{\cdot}(v^2-2uf(\cdot,\cdot,u,v))(r,X_r)dr + N'^{s,x}$ is therefore indistinguishable from  
\\
$-2\int_s^{\cdot}U^{s,x}_rf(\cdot,\cdot,u,v)(r,X_r)dr+2\int_s^{\cdot}U^{s,x}_{r^-}dN^{s,x}_r+[N^{s,x}]$, which can be written $\langle N^{s,x}\rangle-2\int_s^{\cdot}U^{s,x}_rf(\cdot,\cdot,u,v)(r,X_r)dr +2\int_s^{\cdot}U^{s,x}_{r^-}dN^{s,x}_r+([N^{s,x}]-\langle N^{s,x}\rangle)$ where $\langle N^{s,x}\rangle-2\int_s^{\cdot}U^{s,x}_rf(\cdot,\cdot,u,v)(r,X_r)dr$ is predictable with bounded variation and $2\int_s^{\cdot}U^{s,x}_{r^-}dN^{s,x}_r+([N^{s,x}]-\langle N^{s,x}\rangle)$ is a local martingale. By uniqueness of the decomposition of a special semi-martingale, we have 
$$\int_s^{\cdot}(v^2-2uf(\cdot,\cdot,u,v))(r,X_r)dr=\langle N^{s,x}\rangle-2\int_s^{\cdot}U^{s,x}_rf(\cdot,\cdot,u,v)(r,X_r)dr$$
and by Lemma \ref{ModifImpliesdV}, 
$$\int_s^{\cdot}(v^2-2uf(\cdot,\cdot,u,v))(r,X_r)dr=\langle N^{s,x}\rangle-2\int_s^{\cdot}uf(\cdot,\cdot,u,v)(r,X_r)dr,
$$
which finally yields $\langle N^{s,x}\rangle=\int_s^{\cdot}v^2(r,X_r)dr$ as desired.
\end{proof}

We recall that $(\mathbbm{P}^{s,x})_{(s,x)\in[0,T]\times E}$ is
 a Markov canonical class associated to a transition function measurable 
in time (see Definitions \ref{defMarkov} and \ref{DefFoncTrans}) which
fulfills Hypothesis \ref{MPwellposed},
i.e. it is a solution of a well-posed Martingale Problem associated with $(\mathcal{D}(a),a)$.
\begin{theorem}\label{MainTheorem}
 Let $(f,g)$ be a couple verifying Hypothesis
\ref{Hpq}.
Then $Pseudo-PDE(f,g)$ has a unique decoupled mild solution.
\end{theorem}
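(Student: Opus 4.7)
The plan is to reduce the theorem to Theorem \ref{RMartExistenceUniqueness} via the equivalence between martingale and decoupled mild solutions already established in Propositions \ref{MartingaleImpliesMild} and \ref{MildImpliesMartingale}. First observe that $H^{lip}(\zeta,\eta)$ implies $H^{growth}(\zeta,\eta)$: from the inequality $|f(t,x,y,z)| \le |f(t,x,0,0)| + K^Y|y| + K^Z|z|$ combined with $|f(t,x,0,0)| \le C'(1+\eta(x))$, the linear growth estimate of Hypothesis \ref{Hpqeq} follows. Consequently both propositions are applicable throughout.

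For existence, Theorem \ref{RMartExistenceUniqueness} provides a martingale solution $u$ of $Pseudo-PDE(f,g)$. In order to invoke Proposition \ref{MartingaleImpliesMild} I need $u \in \mathcal{L}^2_X$; this is inherent to the construction of the martingale solution in \cite{paper1preprint}, where the solution is obtained via a Picard-type procedure in an $L^2$-space determined by $H^{mom}(\zeta,\eta)$ (the terminal value $g(X_T)$ is square integrable thanks to the growth bound on $g$ together with $H^{mom}(\zeta,\eta)$, and the Lipschitz bound on $f$ preserves $\mathcal{L}^2_X$-integrability along the iteration). Applying Proposition \ref{MartingaleImpliesMild} with the choice $v := \sqrt{\mathfrak{G}(u,u)}$ then yields that $(u,v)$ solves $IP(f,g)$, so that $u$ is by definition a decoupled mild solution of $Pseudo-PDE(f,g)$.

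For uniqueness, let $u$ be any decoupled mild solution of $Pseudo-PDE(f,g)$. By Definition \ref{mildsoluv} there exists $v \ge 0$ such that $u, v \in \mathcal{L}^2_X$ and $(u,v)$ is a solution of $IP(f,g)$. Proposition \ref{MildImpliesMartingale} then identifies $u$ as a martingale solution, and the uniqueness part of Theorem \ref{RMartExistenceUniqueness} forces $u$ to coincide with the function produced in the existence step. As a byproduct, Proposition \ref{MildImpliesMartingale} also gives $v^2 = \mathfrak{G}(u,u)$ up to zero-potential sets, so that $v$ is itself essentially unique.

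The only genuinely non-routine step is the verification that the martingale solution supplied by Theorem \ref{RMartExistenceUniqueness} sits inside $\mathcal{L}^2_X$; once that integrability is in hand, the theorem reduces to a direct concatenation of the two equivalence results together with the well-posedness of the martingale problem for $Pseudo-PDE(f,g)$.
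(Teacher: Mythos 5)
Your proof is correct and follows exactly the paper's own argument, which simply derives the theorem from Theorem \ref{RMartExistenceUniqueness} together with Propositions \ref{MartingaleImpliesMild} and \ref{MildImpliesMartingale}. The one detail you rightly flag --- that the martingale solution lies in $\mathcal{L}^2_X$ so that Proposition \ref{MartingaleImpliesMild} applies --- is indeed needed and is recorded in Theorem \ref{Defuv} (imported from the companion paper), so your appeal to the $L^2$ Picard construction is consistent with what the paper relies on.
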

\begin{proof}
This derives from Theorem \ref{RMartExistenceUniqueness} and Propositions \ref{MartingaleImpliesMild}, \ref{MildImpliesMartingale}.
\end{proof}

\begin{corollary}\label{CoroClassicMild}
Assume that $(f,g)$ verifies  Hypothesis \ref{Hpq}.
A classical solution $u$ of $Pseudo-PDE(f,g)$ such that 
$\Gamma(u)\in\mathcal{L}^1_X$,  is also a decoupled mild solution.	

Conversely, if $u$ is a decoupled mild solution of $Pseudo-PDE(f,g)$  
belonging to $\mathcal{D}(a)$, then $u$ is a classical solution of $Pseudo-PDE(f,g)$ up to a zero-potential set, meaning that the first equality of \eqref{PDE} holds up to a set of zero potential. 
\end{corollary}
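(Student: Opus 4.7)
The plan is to derive both implications by chaining together results already established in the excerpt. Specifically, Proposition \ref{CoroClassic} provides the bridge between classical and martingale solutions in both directions, while Propositions \ref{MartingaleImpliesMild} and \ref{MildImpliesMartingale} provide the equivalence between martingale solutions and decoupled mild solutions (under the growth condition $H^{growth}(\zeta,\eta)$, which is implied by $H^{lip}(\zeta,\eta)$). So conceptually the corollary is nothing more than a composition of known arrows, but care has to be taken with the integrability hypotheses required by each step.

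For the forward direction, I would start with a classical solution $u$ of $Pseudo-PDE(f,g)$ satisfying $\Gamma(u,u)\in\mathcal{L}^1_X$. By Proposition \ref{CoroClassic}, $u$ is a martingale solution. To then invoke Proposition \ref{MartingaleImpliesMild} and obtain that $u$ is a decoupled mild solution, I need to know that $u\in\mathcal{L}^2_X$. This is the one point requiring attention: under $H^{lip}(\zeta,\eta)$ and $H^{mom}(\zeta,\eta)$, Theorem \ref{RMartExistenceUniqueness} guarantees a \emph{unique} martingale solution, and its existence proof (already established in the companion paper) produces one in $\mathcal{L}^2_X$. By uniqueness, the classical solution $u$ coincides with that object and therefore belongs to $\mathcal{L}^2_X$. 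Proposition \ref{MartingaleImpliesMild} then applies and yields the decoupled mild property with $v=\sqrt{\mathfrak{G}(u,u)}=\sqrt{\Gamma(u,u)}$, the last equality being Corollary \ref{RExtendedClassical}.

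For the converse, let $u$ be a decoupled mild solution belonging to $\mathcal{D}(a)$. By Definition \ref{mildsoluv}, $u\in\mathcal{L}^2_X$ automatically, and Proposition \ref{MildImpliesMartingale} immediately upgrades $u$ to a martingale solution of $Pseudo-PDE(f,g)$. Since by assumption $u\in\mathcal{D}(a)$, the second half of Proposition \ref{CoroClassic} concludes that $u$ is a classical solution up to a set of zero potential.

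The only real obstacle, as mentioned, is the $\mathcal{L}^2_X$ integrability needed in the first step; all the rest is a routine invocation of previously established propositions. Should the uniqueness argument for $\mathcal{L}^2_X$-membership be considered too indirect, an alternative is to observe that $u\in\mathcal{D}(a)$ together with $\Gamma(u,u)\in\mathcal{L}^1_X$ places $u$ in $\mathcal{D}(\mathfrak{a})$ via Corollary \ref{RExtendedClassical}, at which point the martingale representation $u(\cdot,X_\cdot)=u(s,x)-\int_s^\cdot f(r,X_r,u,\sqrt{\Gamma(u,u)})\,dV_r+M[u]^{s,x}$ combined with the linear growth of $f$ in $(y,z)$ and Gronwall-type estimates against $dV$ forces $\mathbbm{E}^{s,x}[\int_s^T u^2(r,X_r)dV_r]<\infty$.
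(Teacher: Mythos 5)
Your proposal is correct and follows essentially the same route as the paper, whose entire proof is a one-line appeal to Propositions \ref{MildImpliesMartingale} and \ref{CoroClassic} (with Proposition \ref{MartingaleImpliesMild} implicitly used for the forward direction). In fact you are more careful than the paper: the $\mathcal{L}^2_X$-membership needed to invoke Proposition \ref{MartingaleImpliesMild}, which you justify via uniqueness in Theorem \ref{RMartExistenceUniqueness} together with the $\mathcal{L}^2_X$ assertion of Theorem \ref{Defuv}, is left unaddressed in the paper's proof.
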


\begin{proof}
The statement holds by Proposition \ref{MildImpliesMartingale} and Proposition \ref{CoroClassic}.
\end{proof}

\subsection{Representation of the solution via FBSDEs with no driving martingale}\label{FBSDEs}

In the companion paper \cite{paper1preprint}, the following family of FBSDEs with no driving martingale indexed by $(s,x)\in[0,T]\times E$ was introduced.
\begin{definition}
	Let  $(s,x)\in[0,T]\times E$  and the associated stochastic basis $\left(\Omega,\mathcal{F}^{s,x},(\mathcal{F}^{s,x}_t)_{t\in[0,T]},\mathbbm{P}^{s,x}\right)$ be fixed. A couple 
	$(Y^{s,x},M^{s,x})\in\mathcal{L}^2(dt\otimes d\mathbbm{P}^{s,x})\times\mathcal{H}^2_0$,
        will be said to solve $FBSDE^{s,x}(f,g)$ if it verifies on $[0,T]$,
 in the sense of indistinguishability
	\begin{equation}\label{BSDE}
	Y^{s,x} = g(X_T) + \int_{\cdot}^T f\left(r,X_r,Y^{s,x}_r,\sqrt{\frac{d\langle M^{s,x}\rangle_r}{dr}}\right)dr  -(M^{s,x}_T - M^{s,x}_{\cdot}).
	\end{equation}
If \eqref{BSDE} is only satisfied on a smaller interval $[t_0,T]$, with $0<t_0<T$, we say that $(Y^{s,x},M^{s,x})$ solves $FBSDE^{s,x}(f,g)$ on $[t_0,T]$.
\end{definition}

The following result follows from Theorem 3.22 in \cite{paper1preprint}.
\begin{theorem}\label{SolBSDE}
Assume that $(f,g)$ verifies  Hypothesis \ref{Hpq}.
 Then for any 
$(s,x)\in[0,T]\times E$,  $FBSDE^{s,x}(f,g)$ has a unique solution.
\end{theorem}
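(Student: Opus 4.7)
The claim is that Theorem \ref{SolBSDE} follows directly from Theorem 3.22 of the companion paper \cite{paper1preprint}, which gives existence and uniqueness for BSDEs without driving martingale under a general Lipschitz-type assumption on the driver, provided the data are square integrable in the appropriate sense. So the plan is simply to fix $(s,x)\in[0,T]\times E$, work in the completed stochastic basis $\left(\Omega,\mathcal{F}^{s,x},(\mathcal{F}^{s,x}_t)_{t\in[0,T]},\mathbbm{P}^{s,x}\right)$, and verify that the driver $f(\cdot,\cdot,X_{\cdot},y,z)$ and the terminal condition $g(X_T)$ satisfy the integrability hypotheses required by Theorem 3.22 of \cite{paper1preprint}.

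Concretely, I would check three points. First, the Lipschitz property in $(y,z)$ with deterministic constants $K^Y,K^Z$ is exactly item 3 of $H^{lip}(\zeta,\eta)$ and transfers verbatim from $f(t,x,\cdot,\cdot)$ to $(r,\omega,y,z)\mapsto f(r,X_r(\omega),y,z)$. Second, the terminal condition $g(X_T)$ is in $L^2(\mathbbm{P}^{s,x})$: from item 1 of $H^{lip}(\zeta,\eta)$ one has $|g(X_T)|\le C(1+\zeta(X_T))$, so
\begin{equation*}
\mathbbm{E}^{s,x}[g^2(X_T)] \le 2C^2\left(1+\mathbbm{E}^{s,x}[\zeta^2(X_T)]\right)<\infty
\end{equation*}
by item 1 of $H^{mom}(\zeta,\eta)$. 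Third, the ``driver at zero'' $(r,\omega)\mapsto f(r,X_r(\omega),0,0)$ lies in $\mathcal{L}^2(dV\otimes d\mathbbm{P}^{s,x})$: item 2 of $H^{lip}(\zeta,\eta)$ gives $|f(r,X_r,0,0)|\le C'(1+\eta(X_r))$, whence
\begin{equation*}
\mathbbm{E}^{s,x}\!\left[\int_0^T f^2(r,X_r,0,0)\,dV_r\right] \le 2(C')^2\left(V_T+\mathbbm{E}^{s,x}\!\left[\int_0^T\eta^2(X_r)\,dV_r\right]\right)<\infty
\end{equation*}
by item 2 of $H^{mom}(\zeta,\eta)$.

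With these three inputs in hand, Theorem 3.22 of \cite{paper1preprint} applies in the basis $\left(\Omega,\mathcal{F}^{s,x},(\mathcal{F}^{s,x}_t)_{t\in[0,T]},\mathbbm{P}^{s,x}\right)$ and produces a unique couple $(Y^{s,x},M^{s,x})\in\mathcal{L}^2(dV\otimes d\mathbbm{P}^{s,x})\times\mathcal{H}^2_0$ solving \eqref{BSDE} on $[0,T]$. Note that in writing the driver as $f(r,X_r,Y^{s,x}_r,\sqrt{d\langle M^{s,x}\rangle/dV}(r))$ one implicitly uses that $d\langle M^{s,x}\rangle\ll dV$, so that the Radon-Nikodym density is well-defined; but this is exactly Proposition \ref{H2V=H2}, which asserts $\mathcal{H}_0^2=\mathcal{H}^{2,V}$ under Hypothesis \ref{MPwellposed}, and is part of the standing framework of Theorem 3.22.

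There is essentially no obstacle beyond bookkeeping: the real work was done in \cite{paper1preprint}, where a Picard-type fixed-point argument in $\mathcal{L}^2(dV\otimes d\mathbbm{P}^{s,x})\times\mathcal{H}^2_0$ (with a suitable exponential weight in $V$) was carried out. The only thing to watch is that the Lipschitz assumption from \cite{paper1preprint} is stated in enough generality to cover drivers of the form $f(r,X_r,y,z)$ with the Lipschitz constants $K^Y,K^Z$ above; this follows immediately from $H^{lip}(\zeta,\eta)$ item 3, so invoking Theorem 3.22 concludes the proof.
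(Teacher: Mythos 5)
Your proposal is correct and follows exactly the route the paper takes: the paper simply states that the result follows from Theorem 3.22 of \cite{paper1preprint}, and your verification of the square integrability of $g(X_T)$ and of $f(\cdot,X_\cdot,0,0)$ via $H^{mom}(\zeta,\eta)$ and $H^{lip}(\zeta,\eta)$ is the (routine) bookkeeping the paper leaves implicit. Nothing is missing.
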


In the following theorem, we summarize the links between the $FBSDE^{s,x}(f,g)$ and the notion of martingale solution of $Pseudo-PDE(f,g)$.
 These are shown in Theorem 5.14, Remark 5.15, Theorem 5.20 and Theorem 5.21 of \cite{paper1preprint}.
 
\begin{theorem}\label{Defuv}
Assume that $(f,g)$ verifies  \ref{Hpq}
 and let $(Y^{s,x}, M^{s,x})$ denote the (unique) solution of $FBSDE^{s,x}(f,g)$ for fixed $(s,x)$.
Let $u$ be the unique martingale solution 
of $Pseudo-PDE(f,g)$.

For every 
	$(s,x)\in[0,T]\times E$, on the interval $[s,T]$, we have the following.
	\begin{itemize}
		\item $Y^{s,x}$ and $u(\cdot,X_{\cdot})$ are  $\mathbbm{P}^{s,x}$-modifications, and equal $dt\otimes d\mathbbm{P}^{s,x}$ a.e.;
		\item $M^{s,x}$ and $M[u]^{s,x}$ are $\mathbbm{P}^{s,x}$-indistinguishable.
	\end{itemize}
Moreover $u$ belongs to $\mathcal{L}^2_X$ and
 for any $(s,x)\in[0,T]\times E$, we have
 $\frac{d\langle M^{s,x}\rangle_t}{dt}=\mathfrak{G}(u)(t,X_{t})$ $dt\otimes d\mathbbm{P}^{s,x}$ a.e.
\end{theorem}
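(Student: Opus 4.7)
My plan is to construct a candidate solution of $FBSDE^{s,x}(f,g)$ directly from the unique martingale solution $u$ of $Pseudo-PDE(f,g)$ provided by Theorem \ref{RMartExistenceUniqueness}, then invoke the uniqueness clause of Theorem \ref{SolBSDE} to read off both identifications simultaneously. By Definition \ref{D417}, $u\in\mathcal{D}(\mathfrak{a})$ with $\mathfrak{a}(u)=-f(\cdot,\cdot,u,\sqrt{\mathfrak{G}(u,u)})$ and $u(T,\cdot)=g$. Fix $(s,x)$; Definitions \ref{domainextended} and \ref{extended} furnish a cadlag modification $U^{s,x}$ of $u(\cdot,X_{\cdot})$ on $[s,T]$ which is a special semimartingale with decomposition
\begin{equation*}
U^{s,x}_t = u(s,x) - \int_s^t f\bigl(r, X_r, u(r,X_r), \sqrt{\mathfrak{G}(u,u)}(r,X_r)\bigr)\, dV_r + M[u]^{s,x}_t,
\end{equation*}
where $M[u]^{s,x}\in\mathcal{H}^2_0$. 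Evaluating at $t=T$, using $u(T,\cdot)=g$, subtracting, and invoking Lemma \ref{ModifImpliesdV} to swap $u(r,X_r)$ for $U^{s,x}_r$ under $dV$-integrals would rearrange this into the backward form
\begin{equation*}
U^{s,x}_t = g(X_T) + \int_t^T f\bigl(r, X_r, U^{s,x}_r, \sqrt{\mathfrak{G}(u,u)}(r,X_r)\bigr)\, dV_r - (M[u]^{s,x}_T - M[u]^{s,x}_t).
\end{equation*}

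The next step is to identify the driver with the one appearing in $FBSDE^{s,x}(f,g)$. Proposition \ref{P321} gives $\langle M[u]^{s,x}\rangle = \int_s^{\cdot}\mathfrak{G}(u,u)(r,X_r)\, dV_r$ on $[s,T]$, so by uniqueness of the Radon-Nikodym derivative, $\tfrac{d\langle M[u]^{s,x}\rangle}{dV}(r) = \mathfrak{G}(u,u)(r,X_r)$ up to $dV\otimes d\mathbbm{P}^{s,x}$-negligibility. Plugging this into the previous display, and extending the pair appropriately to $[0,s]$ using $X\equiv x$ almost surely there (with $M[u]^{s,x}\equiv 0$ on $[0,s]$ by Notation \ref{Mphi}), makes $(U^{s,x}, M[u]^{s,x})$ a solution of $FBSDE^{s,x}(f,g)$ on $[0,T]$. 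Verifying the membership $U^{s,x}\in\mathcal{L}^2(dV\otimes d\mathbbm{P}^{s,x})$ reduces to showing $u\in\mathcal{L}^2_X$, which I would extract from the tower representation $u(s,x) = \mathbbm{E}^{s,x}\bigl[g(X_T) + \int_s^T f(r,X_r,u,\sqrt{\mathfrak{G}(u,u)})\, dV_r\bigr]$ combined with $H^{mom}(\zeta,\eta)$, $H^{lip}(\zeta,\eta)$ and a Gronwall argument on $(s,x)\mapsto \|u\|_{2,s,x}^2$; the condition $M[u]^{s,x}\in\mathcal{H}^2_0$ is built into the extended-generator framework.

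With $(U^{s,x},M[u]^{s,x})$ now a bona fide solution, Theorem \ref{SolBSDE} forces $Y^{s,x}=U^{s,x}$ and $M^{s,x}=M[u]^{s,x}$ up to indistinguishability. Since $U^{s,x}$ was built as a cadlag modification of $u(\cdot,X_{\cdot})$, transitivity yields that $Y^{s,x}$ is a $\mathbbm{P}^{s,x}$-modification of $u(\cdot,X_{\cdot})$, and Lemma \ref{ModifImpliesdV} upgrades this to the announced $dV\otimes d\mathbbm{P}^{s,x}$ a.e.\ equality; the identification of $M^{s,x}$ with $M[u]^{s,x}$ is immediate, and the final identity $\tfrac{d\langle M^{s,x}\rangle}{dV}=\mathfrak{G}(u,u)(\cdot,X_{\cdot})$ is precisely the driver identification performed above. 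The main obstacle I anticipate is the a priori integrability $u\in\mathcal{L}^2_X$: under only the Lipschitz (not bounded) hypothesis $H^{lip}(\zeta,\eta)$, one must carefully trade the linear-growth constants $K^Y,K^Z$ against the generalized moments $\zeta,\eta$ of $X$, which is the delicate step where Gronwall enters. The remainder is essentially algebraic bookkeeping around the special-semimartingale decomposition and the null-set technicalities absorbed by Lemma \ref{ModifImpliesdV}.
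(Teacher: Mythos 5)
The paper offers no proof of Theorem \ref{Defuv} at all: it is imported verbatim from the companion paper, where the logic runs in the opposite direction ($u$ is \emph{defined} by $u(s,x):=Y^{s,x}_s$ from the Picard-constructed FBSDE solutions, so that $u\in\mathcal{L}^2_X$ and the modification statements come for free from the Markov property). Your argument inverts this: you take the martingale solution $u$ from Theorem \ref{RMartExistenceUniqueness} as given, manufacture an FBSDE solution $(U^{s,x},M[u]^{s,x})$ from it, and appeal to the uniqueness in Theorem \ref{SolBSDE}. This is essentially the construction of the paper's own Proposition \ref{MildImpliesBSDE} (stated there for solutions of $IP(f,g)$, which by Proposition \ref{MildImpliesMartingale} amounts to the same thing), so the route is legitimate within this paper's axiomatics; be aware, though, that at the level of the two-paper corpus it is nearly circular, since the cited existence of the martingale solution is itself obtained through the very FBSDE representation you are proving.

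Two steps need more than your sketch. First, the bound $u\in\mathcal{L}^2_X$: a Gronwall argument ``on $(s,x)\mapsto\|u\|^2_{2,s,x}$'' has no integral inequality to run on, and a Gronwall in $t$ applied to $t\mapsto\mathbbm{E}^{s,x}[u^2(t,X_t)]$ is not licensed because that quantity is not known to be finite a priori. The workable version is pathwise: since $M[u]^{s,x}\in\mathcal{H}^2_0$ one gets $\mathfrak{G}(u,u)\in\mathcal{L}^1_X$ for free, then $H^{lip}(\zeta,\eta)$ gives $|U^{s,x}_t|\leq |u(s,x)|+\int_s^t\bigl(C'(1+\eta)+K^Z\sqrt{\mathfrak{G}(u,u)}\bigr)(r,X_r)dV_r+K^Y\int_s^t|U^{s,x}_r|dV_r+\underset{r}{\sup}\,|M[u]^{s,x}_r|$, and the deterministic Gronwall lemma applied $\omega$ by $\omega$ bounds $\underset{t}{\sup}\,|U^{s,x}_t|$ by an $L^2$ random variable times $e^{K^YV_T}$; this is what the paper outsources to ``a slight adaptation of Lemma 3.25'' of the companion paper. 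Second, Theorem \ref{SolBSDE} asserts uniqueness of solutions on $[0,T]$, while your pair solves the equation only on $[s,T]$; extending it is not just setting $M[u]^{s,x}\equiv 0$ on $[0,s]$, because there the equation forces $Y_t=Y_s+\int_t^s f(r,x,Y_r,0)dV_r$, a deterministic backward equation your candidate must be shown to satisfy (alternatively one needs uniqueness on $[s,T]$, which is not among the results quoted in this paper). The paper is careful on exactly this point: Proposition \ref{MildImpliesBSDE} claims a solution only \emph{on} $[s,T]$ and never identifies it with $(Y^{s,x},M^{s,x})$.
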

\begin{remark} The martingale solution $u$ of $Pseudo-PDE$ exists
and is unique
by Theorem \ref{RMartExistenceUniqueness}.
\end{remark}
We can therefore represent the unique decoupled mild solution of $Pseudo-PDE(f,g)$ via the stochastic equations $FBSDE^{s,x}(f,g)$ as follows.
\begin{theorem}\label{Representation}
Assume that $(f,g)$ verifies see Hypothesis
\ref{Hpq}  and let $(Y^{s,x}, M^{s,x})$ denote the (unique) solution of $FBSDE^{s,x}(f,g)$ for fixed $(s,x)$.
	
Then for any $(s,x)\in[0,T]\times E$, the random variable $Y^{s,x}_s$ is $\mathbbm{P}^{s,x}$ a.s. equal to a constant (which we still denote $Y^{s,x}_s$), and the function 
\begin{equation}
u:(s,x)\longmapsto Y^{s,x}_s
\end{equation}
is the unique decoupled mild solution of $Pseudo-PDE(f,g)$.
\end{theorem}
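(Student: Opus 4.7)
The plan is to leverage the three auxiliary results already stated: Theorem \ref{RMartExistenceUniqueness} (existence/uniqueness of a martingale solution), Theorem \ref{Defuv} (identification of $Y^{s,x}$ with $u(\cdot,X_\cdot)$), and Theorem \ref{MainTheorem} together with Propositions \ref{MartingaleImpliesMild} and \ref{MildImpliesMartingale} (equivalence of martingale and decoupled mild solutions). The proof will essentially be a concatenation of these facts, so the work is purely organizational.

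First, I would invoke Theorem \ref{RMartExistenceUniqueness} to obtain the unique martingale solution $u$ of $Pseudo\text{-}PDE(f,g)$. By Theorem \ref{Defuv}, for each $(s,x)\in[0,T]\times E$ the process $Y^{s,x}$ and the process $u(\cdot,X_\cdot)$ are $\mathbbm{P}^{s,x}$-modifications of each other on $[s,T]$; in particular, evaluating at $t=s$,
\begin{equation*}
Y^{s,x}_s = u(s,X_s)\quad \mathbbm{P}^{s,x}\text{-a.s.}
\end{equation*}
Property (a) of Definition \ref{MartingaleProblem} applied under $\mathbbm{P}^{s,x}$ yields $X_s=x$ almost surely, hence $u(s,X_s)=u(s,x)$ almost surely. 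Consequently $Y^{s,x}_s$ is $\mathbbm{P}^{s,x}$-a.s. equal to the deterministic value $u(s,x)$, which proves the first assertion and simultaneously defines the function $(s,x)\mapsto Y^{s,x}_s=u(s,x)$.

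Second, to identify this function as the unique decoupled mild solution, I would appeal to Proposition \ref{MartingaleImpliesMild}: since $u\in\mathcal{L}^2_X$ (as asserted in Theorem \ref{Defuv}) is the martingale solution, it is in particular a decoupled mild solution of $Pseudo\text{-}PDE(f,g)$. Uniqueness is then precisely the content of Theorem \ref{MainTheorem} (or alternatively, any other decoupled mild solution would by Proposition \ref{MildImpliesMartingale} be a martingale solution, hence coincide with $u$ by Theorem \ref{RMartExistenceUniqueness}).

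There is no real obstacle, since every nontrivial ingredient has already been proved; the only subtle point worth double-checking is that the modification identity from Theorem \ref{Defuv} can indeed be evaluated at the single time $t=s$ (as opposed to holding only $dV\otimes d\mathbbm{P}^{s,x}$ a.e., which would be insufficient to conclude that $Y^{s,x}_s$ is a.s. constant). This is precisely why Theorem \ref{Defuv} states that $Y^{s,x}$ and $u(\cdot,X_\cdot)$ are $\mathbbm{P}^{s,x}$-modifications on $[s,T]$, and not merely equal almost everywhere in time; combined with $\mathbbm{P}^{s,x}(X_s=x)=1$ this closes the argument.
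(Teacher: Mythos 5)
Your proposal is correct and follows essentially the same route as the paper's own proof: identify $Y^{s,x}_s$ with $u(s,X_s)=u(s,x)$ via Theorem \ref{Defuv} and the fact that $X_s=x$ $\mathbbm{P}^{s,x}$-a.s., then conclude via Proposition \ref{MartingaleImpliesMild} and the uniqueness results. The only difference is that you spell out the modification-versus-a.e. subtlety and the appeal to property (a) of Definition \ref{MartingaleProblem}, which the paper leaves implicit.
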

\begin{proof}
	By Theorem \ref{Defuv}, there exists a Borel function $u$ such that for every $(s,x)\in[0,T]\times E$, $Y^{s,x}_s=u(s,X_s)=u(s,x)$ $\mathbbm{P}^{s,x}$ a.s. and $u$ is the unique martingale solution of $Pseudo-PDE(f,g)$. By Proposition \ref{MartingaleImpliesMild}, it is also its unique decoupled mild solution.
\end{proof}

\begin{remark} \label{R317}
The function $v$ such that $(u,v)$ is the unique solution of the identification problem $IP(f,g)$ also has a stochastic representation since it verifies for every $(s,x)\in[0,T]\times E$, on the interval $[s,T]$,
\\
$\frac{d\langle M^{s,x}\rangle_t}{dt}=v^2(t,X_{t})$ $dt\otimes d\mathbbm{P}^{s,x}$ a.e. where $M^{s,x}$ is the martingale part of the solution of $FBSDE^{s,x}$.
\end{remark}

Conversely, under the weaker condition Hypothesis \ref{Hpqeq} if one knows the solution of $IP(f,g)$, one can (for every $(s,x)$) produce a version of
 a solution of $FBSDE^{s,x}(f,g)$ as follows. This is only possible with the notion of decoupled mild solution:
even in the case of Brownian BSDEs the knowledge of the viscosity solution
of the related PDE would (in general) not be sufficient to reconstruct the 
family of solutions of the BSDEs.

\begin{proposition}\label{MildImpliesBSDE}
Assume that $(f,g)$ verifies Hypothesis \ref{Hpqeq}.
Suppose the existence of a  solution $(u,v)$ to $IP(f,g)$, and let $(s,x)\in[0,T]\times E$ be fixed.
Then 
\begin{equation}
\left(u(\cdot,X),\quad u(\cdot,X)-u(s,x)+\int_s^{\cdot}f(\cdot,\cdot,u,v)(r,X_r)dr\right)
\end{equation}
admits on $[s,T]$ a $\mathbbm{P}^{s,x}$-version $(Y^{s,x},M^{s,x})$ 
which solves $FBSDE^{s,x}$ on $[s,T]$.
\end{proposition}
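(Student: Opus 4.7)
The plan is to reduce the statement to the martingale solution framework already established, and then simply read off the FBSDE from the extended generator decomposition of $u(\cdot,X)$. The key observation is that Proposition \ref{MildImpliesMartingale} does almost all the work: from a solution $(u,v)$ of $IP(f,g)$ it produces a martingale solution together with the identification $v^2=\mathfrak{G}(u,u)$ (up to zero potential sets). Everything else is bookkeeping.

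Concretely, I would first apply Proposition \ref{MildImpliesMartingale} to obtain $u\in\mathcal{D}(\mathfrak{a})$ with $\mathfrak{a}(u)=-f(\cdot,\cdot,u,v)$, $u(T,\cdot)=g$, and $\mathfrak{G}(u,u)=v^2$ up to zero potential sets. By Definitions \ref{domainextended} and \ref{extended}, under $\mathbbm{P}^{s,x}$ the process
$$\mathds{1}_{[s,T]}\!\left(u(\cdot,X_\cdot)-u(s,x)+\int_s^{\cdot}f(\cdot,\cdot,u,v)(r,X_r)dV_r\right)$$
admits a cadlag modification $M[u]^{s,x}\in\mathcal{H}^2_0$. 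I would then \emph{define} $M^{s,x}:=M[u]^{s,x}$ and
$$Y^{s,x}_t:=u(s,x)-\int_s^{t}f(r,X_r,u(r,X_r),v(r,X_r))dV_r+M^{s,x}_t,\qquad t\in[s,T].$$
By construction $Y^{s,x}$ is a cadlag $\mathbbm{P}^{s,x}$-modification of $u(\cdot,X_\cdot)$ on $[s,T]$, and $(Y^{s,x},M^{s,x})$ is a modification of the pair in the statement. Integrability is immediate: $M^{s,x}\in\mathcal{H}^2_0$ by construction, and Lemma \ref{ModifImpliesdV} together with $u\in\mathcal{L}^2_X$ gives $Y^{s,x}=u(\cdot,X_\cdot)$ $dV\otimes d\mathbbm{P}^{s,x}$ a.e., so $Y^{s,x}\in\mathcal{L}^2(dV\otimes d\mathbbm{P}^{s,x})$.

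Next I would verify the FBSDE equation. Evaluating the definition of $Y^{s,x}$ at $t=T$ and using $u(T,\cdot)=g$ yields $Y^{s,x}_T=g(X_T)$ $\mathbbm{P}^{s,x}$ a.s. Subtracting this identity from the definition of $Y^{s,x}_t$ gives, on $[s,T]$,
$$Y^{s,x}_t=g(X_T)+\int_t^{T}f(r,X_r,u(r,X_r),v(r,X_r))dV_r-(M^{s,x}_T-M^{s,x}_t).$$
It only remains to replace the integrand by $f(r,X_r,Y^{s,x}_r,\sqrt{d\langle M^{s,x}\rangle/dV}(r))$. By Proposition \ref{P321} applied to $\phi=\psi=u$ together with $\mathfrak{G}(u,u)=v^2$, we have $\langle M^{s,x}\rangle=\int_s^{\cdot}v^2(r,X_r)dV_r$ on $[s,T]$, so $\sqrt{d\langle M^{s,x}\rangle/dV}(r)=v(r,X_r)$ $dV\otimes d\mathbbm{P}^{s,x}$ a.e., using $v\geq 0$. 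Combining this with $Y^{s,x}=u(\cdot,X_\cdot)$ $dV\otimes d\mathbbm{P}^{s,x}$ a.e. shows that the two integrands coincide $dV\otimes d\mathbbm{P}^{s,x}$ a.e., hence the $dV$-integrals are indistinguishable, and we obtain $FBSDE^{s,x}(f,g)$ on $[s,T]$.

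The only genuine difficulty is the bookkeeping between \emph{modification}, \emph{indistinguishability}, and equality $dV\otimes d\mathbbm{P}^{s,x}$ a.e.; this is exactly what Lemma \ref{ModifImpliesdV} and Proposition \ref{P321} are designed for. The real analytical content (the existence of the cadlag square-integrable martingale $M[u]^{s,x}$ and the identification of its bracket with $v^2$) has already been handled in Proposition \ref{MildImpliesMartingale}, so nothing new needs to be proved here.
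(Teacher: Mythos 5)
Your proof is correct and follows essentially the same route as the paper: both reduce to Proposition \ref{MildImpliesMartingale}, read off the special semimartingale decomposition of $u(\cdot,X)$ furnished by Definitions \ref{domainextended} and \ref{extended}, identify $\langle M[u]^{s,x}\rangle=\int_s^{\cdot}v^2(r,X_r)dV_r$, and substitute into the equation via Lemma \ref{ModifImpliesdV}. The one place you genuinely diverge is the verification that $Y^{s,x}\in\mathcal{L}^2(dV\otimes d\mathbbm{P}^{s,x})$: the paper obtains this through an adaptation of an a priori estimate for BSDE solutions (Lemma 3.25 of the companion paper, cf.\ Remark \ref{RAdaptation}), whereas you deduce it directly from the hypothesis $u\in\mathcal{L}^2_X$ (built into Definition \ref{mildsoluv}) combined with Lemma \ref{ModifImpliesdV}, since a $\mathbbm{P}^{s,x}$-modification of $u(\cdot,X_{\cdot})$ inherits the bound $\mathbbm{E}^{s,x}\left[\int_s^T u^2(r,X_r)dV_r\right]<\infty$. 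Your argument is valid and arguably cleaner: it stays entirely within the present paper and avoids the need to re-examine a proof from the companion paper under the weaker hypothesis $H^{growth}(\zeta,\eta)$, at the (negligible) cost of leaning on the integrability clause of the definition of $IP(f,g)$ rather than on the structure of the equation itself.
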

\begin{proof}
By Proposition \ref{MildImpliesMartingale}, $u$ is a martingale solution of $Pseudo-PDE(f,g)$ and $v^2=\mathfrak{G}(u)$. We now fix $(s,x)\in[0,T]\times E$. Combining Definitions \ref{extended}, \ref{extendedgamma} and \ref{D417}, we know that $u(T,\cdot)=g$ and that on $[s,T]$, $u(\cdot,X)$ has a $\mathbbm{P}^{s,x}$-version $U^{s,x}$ with decomposition  
$U^{s,x}=u(s,x)-\int_s^{\cdot}f(\cdot,\cdot,u,v)(r,X_r)dr +M[u]^{s,x}$, where $M[u]^{s,x}$ is an element of $\mathcal{H}^2_0$ of angular bracket $\int_s^{\cdot}v^2(r,X_r)dr$ and is a version of $u(\cdot,X)-u(s,x)+\int_s^{\cdot}f(\cdot,\cdot,u,v)(r,X_r)dr$. By Lemma \ref{ModifImpliesdV}, taking into account
$u(T,\cdot)=g$, the couple $(U^{s,x},M[u]^{s,x})$ verifies on $[s,T]$, in the sense of indistinguishability
\begin{equation}
U^{s,x} = g(X_T)+\int_{\cdot}^Tf\left(r,X_r,U^{s,x}_r,\sqrt{\frac{d\langle M[u]^{s,x}\rangle_r}{dr}}\right)dr -(M[u]^{s,x}_T-M[u]^{s,x}_{\cdot})
\end{equation}
with $M[u]^{s,x}\in\mathcal{H}^2_0$ verifying $M[u]^{s,x}_s=0$ (see Definition \ref{extended}) and $U^{s,x}_s$  is deterministic so in particular 
is a square integrable r.v.
 Following a slight adaptation of 
the proof of Lemma 3.25 in  \cite{paper1preprint}
(see Remark \ref{RAdaptation} below), this implies that $U^{s,x}\in\mathcal{L}^2(dt\otimes d\mathbbm{P}^{s,x})$ and therefore that 
that $(U^{s,x},M[u]^{s,x})$ is a solution of $FBSDE^{s,x}(f,g)$ 
  on $[s,T]$.
\end{proof}
\begin{remark} \label{RAdaptation}
Indeed Lemma 3.25 in \cite{paper1preprint}, taking into account Notation 5.5
  ibidem, can be applied rigorously only under Hypothesis \ref{Hpq} for $(f,g)$.
 However, the same proof easily
allows an extension to our framework Hypothesis \ref{Hpqeq}.

\end{remark}

\section{Examples of applications}\label{exemples}

We now develop some examples. Some of the applications that we are interested in involve operators which only act on the space variable, and we will extend them to time-dependent functions. The reader may consult 
Appendix \ref{SC}, concerning details about such extensions.
In all the items  below there will be a Markov canonical class with  transition function 
measurable in time which is solution of a well-posed Martingale Problem associated to some  $(\mathcal{D}(a),a)$ as introduced in Definition \ref{MartingaleProblem}. Therefore all the results of
 this paper will apply to all the examples below, namely Theorem \ref{RMartExistenceUniqueness}, Propositions \ref{CoroClassic}, \ref{MartingaleImpliesMild} and \ref{MildImpliesMartingale}, Theorem \ref{MainTheorem}, Corollaries \ref{CoroClassicMild} and \ref{CoroClassicMild}, Theorems \ref{SolBSDE}, \ref{Defuv} and \ref{Representation} and Proposition \ref{MildImpliesBSDE}. In particular,  Theorem \ref{MainTheorem}
states in all the cases, under suitable Lipschitz type conditions for the driver $f$,
that the corresponding Pseudo-PDE admits a unique decoupled mild solution.
In all the examples   $T \in\mathbbm{R}_+^*$ will be fixed.

\subsection{Markovian jump diffusions}\label{S4a}

In this subsection, the state space will be $E:=\mathbbm{R}^d$ for some $d\in\mathbbm{N}^*$.
We are given $\mu\in\mathcal{B}([0,T]\times \mathbbm{R}^d, \mathbbm{R}^d)$, $\alpha\in\mathcal{B}([0,T]\times \mathbbm{R}^d,S^*_+(\mathbbm{R}^d))$ 
(where $S^*_+(\mathbbm{R}^d)$ is the space of symmetric strictly positive definite matrices of size $d$) and $K$ a L\'evy kernel: this means  that for every $(t,x)\in [0,T]\times \mathbbm{R}^d$, $K(t,x,\cdot)$ is a $\sigma$-finite measure 
on $\mathbbm{R}^d\backslash\{0\}$, $\underset{t,x}{\text{sup}}\int \frac{\|y\|^2}{1+\|y\|^2}K(t,x,dy)<\infty$ and for every Borel set $A\in\mathcal{B}(\mathbbm{R}^d\backslash\{0\})$, 
$(t,x)\longmapsto \int_A \frac{\|y\|^2}{1+\|y\|^2}K(t,x,dy)$ is Borel.
 We will consider the operator $a$ defined by
\begin{equation} \label{PIDE}
\partial_t\phi + \frac{1}{2}Tr(\alpha\nabla^2\phi) + (\mu,\nabla \phi) +\int\left(\phi(\cdot,\cdot+y)-\phi(\cdot,y)-\frac{(y,\nabla \phi)}{1+\|y\|^2}\right)K(\cdot,\cdot,dy),
\end{equation}
on the domain $\mathcal{D}(a)$ which is here the linear algebra
$\mathcal{C}^{1,2}_b([0,T]\times\mathbbm{R}^d,\mathbbm{R})$ of real continuous bounded functions on $[0,T]\times \mathbbm{R}^d$ which are continuously differentiable in the first variable with bounded derivative, and twice continuously differentiable in the second variable with bounded derivatives.
\\
\\
Concerning martingale problems associated to parabolic PDE operators, one may consult \cite{stroock}. Since we want to include integral operators, we will adopt the formalism of D.W. Stroock in \cite{stroock1975diffusion}. Its Theorem 4.3 and the penultimate sentence of its proof states the following.
\begin{theorem}\label{Stroock}
Suppose that $\mu$ is bounded, that $\alpha$ is bounded continuous  
 and that for any $A\in\mathcal{B}(\mathbbm{R}^d\backslash\{0\})$, 
   $(t,x)\longmapsto \int_A \frac{y}{1+\|y\|^2}K(t,x,dy)$ is bounded continuous.
 Then, for every $(s,x)$, there exists a unique probability $\mathbbm{P}^{s,x}$ on the canonical space (see Definition \ref{canonicalspace}) such that $\phi(\cdot,X_{\cdot})-\int_s^{\cdot}a(\phi)(r,X_r)dr$ is a local martingale for any $\phi\in\mathcal{D}(a)$ and $\mathbbm{P}^{s,x}(X_s=x)=1$. 
 Moreover  $(\mathbbm{P}^{s,x})_{(s,x)\in[0,T]\times\mathbbm{R}^d}$ defines a
 Markov canonical class and its transition function is measurable in time.
\end{theorem}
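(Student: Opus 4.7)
The plan is to follow the Stroock--Varadhan program adapted to processes with jumps, splitting the argument into existence, uniqueness, the Markov property, and measurability of the transition kernel.

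For existence, I would first mollify the data: replace the Lévy kernel $K$ by a truncated, bounded-intensity kernel $K^n$ obtained by restricting to $\{\|y\|\ge 1/n\}$, and keep $\mu,\alpha$ as they are. For each $n$, a solution of the martingale problem is obtained by solving an SDE driven by an independent Brownian motion and a Poisson random measure with intensity $K^n(t,x,dy)\,dt$, which is standard under the bounded and continuous assumptions on $\mu,\alpha$, and $\int_A \frac{y}{1+\|y\|^2}K^n(t,x,dy)$. Call the resulting laws $\mathbbm{P}^{s,x,n}$ on the canonical space $\mathbbm{D}([0,T],\mathbbm{R}^d)$. Tightness of $(\mathbbm{P}^{s,x,n})_n$ follows from Aldous' criterion applied to the semimartingale decomposition produced by the martingale problem itself: the drift and the finite-variation pieces have moduli of continuity controlled by the uniform bounds on $\mu,\alpha$ and on $\int \frac{\|y\|^2}{1+\|y\|^2}K(t,x,dy)$, and the martingale part is controlled by its predictable bracket. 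Any weak limit point $\mathbbm{P}^{s,x}$ solves the martingale problem for $(\mathcal{D}(a),a)$ because for $\phi\in\mathcal{C}^{1,2}_b$ the map $\omega\mapsto \phi(t,\omega_t)-\int_s^t a(\phi)(r,\omega_r)dr$ is, up to a small truncation term that vanishes as $n\to\infty$ thanks to the bound $\int \frac{\|y\|^2}{1+\|y\|^2}K(t,x,dy)<\infty$, continuous for the Skorokhod topology on sets where $\omega$ has no jump at the relevant times.

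For uniqueness, which is the main obstacle, I would apply Stroock's argument: test the martingale problem against a rich enough class of functions to determine the one-dimensional marginals, and then extend to all finite-dimensional marginals. Concretely, for fixed $(s,x)$ and any bounded measurable $h$, one shows that $u(t,y) := \mathbbm{E}^{t,y}[h(X_T)]$ (defined through any solution) is the unique bounded solution of the integro-differential Kolmogorov equation $a(u)=0$ with terminal condition $h$. The key ingredient is the resolvent/elliptic estimate furnished by the non-degeneracy of $\alpha$ (strict positive definiteness) together with the continuity of the coefficients, which allows a perturbation argument: uniqueness is first established when $\mu$, $\alpha$ and the Lévy measure are constant (in which case the generator has a fundamental solution obtained by Fourier analysis), and then transferred to the variable-coefficient case by a localisation and freezing-of-coefficients scheme combined with the martingale characterisation. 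This is precisely the content of Theorem~4.3 in \cite{stroock1975diffusion}; I would quote the kernel estimates from there rather than redo them.

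Once existence and uniqueness hold for every $(s,x)$, the Markov property is a soft consequence of uniqueness \emph{à la} Stroock--Varadhan: the regular conditional probability $\mathbbm{P}^{s,x}(\,\cdot\,|\mathcal{F}_t)(\omega)$ solves, for $\mathbbm{P}^{s,x}$-a.e.~$\omega$, the martingale problem started from $(t,\omega(t))$, hence coincides with $\mathbbm{P}^{t,\omega(t)}$ by uniqueness. Finally, for the measurability in time of the transition function, one uses uniqueness a second time: the map $(s,x)\mapsto \mathbbm{P}^{s,x}$ is Borel because the set of measures solving the martingale problem is cut out by countably many measurable conditions (a countable determining family of test functions $\phi\in\mathcal{C}^{1,2}_b$ suffices, by a monotone class argument), and a standard measurable-selection statement in the space of probability measures on $\mathbbm{D}([0,T],\mathbbm{R}^d)$ endowed with the weak topology then yields joint Borel measurability of $(s,x,t,B)\mapsto \mathbbm{P}^{s,x}(X_t\in B)$, which is exactly measurability of the transition function in the sense of Definition~\ref{DefFoncTrans}. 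The single nontrivial analytic input is thus the uniqueness for the non-local operator \eqref{PIDE}, which we import from \cite{stroock1975diffusion}.
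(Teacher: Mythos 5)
The paper does not prove this statement at all: it is imported verbatim as Theorem~4.3 of \cite{stroock1975diffusion} (the authors write that ``its Theorem 4.3 and the penultimate sentence of its proof states the following''), so there is no internal argument to compare yours against. Your sketch is a faithful reconstruction of the Stroock--Varadhan program that the cited paper actually carries out --- truncation of small jumps and tightness for existence, freezing of coefficients and perturbation off the constant-coefficient Fourier analysis for uniqueness, regular conditional probabilities for the Markov property --- but note that you yourself defer the one genuinely hard step (uniqueness for the variable-coefficient non-local operator) to the very same Theorem~4.3, so in substance your proposal is an expanded gloss on the citation rather than an independent proof. That is a perfectly legitimate thing to do here, and arguably more informative than the paper's bare reference.

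Two points where your sketch is thinner than it looks if one tried to make it self-contained. First, in the existence step you claim the truncation error in the generator ``vanishes as $n\to\infty$ thanks to the bound $\sup_{t,x}\int \frac{\|y\|^2}{1+\|y\|^2}K(t,x,dy)<\infty$''; a finite supremum does not by itself give \emph{uniform} (in $(t,x)$) smallness of $\int_{\|y\|<1/n}\|y\|^2\,K(t,x,dy)$, which is what you need to pass to the limit in the martingale property uniformly along the approximating laws --- some equi-integrability of the small-jump mass, or a pointwise argument combined with dominated convergence under the limit law, has to be supplied. Second, the measurability in time of the transition function is obtained in the paper not from a measurable-selection argument but from a specific sentence of Stroock's proof (his construction produces the kernel jointly measurably); your selection-theorem route is a standard and correct alternative, but it is a different mechanism and would need the usual care that the solution set of the martingale problem is a measurable subset of $\mathcal{P}(\mathbbm{D}([0,T],\mathbbm{R}^d))$ depending measurably on $(s,x)$, which your countable-test-function remark only begins to address.
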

The Martingale Problem associated to $(\mathcal{D}(a),a)$ 
 in the sense of Definition \ref{MartingaleProblem} is therefore well-posed and solved by $(\mathbbm{P}^{s,x})_{(s,x)\in[0,T]\times\mathbbm{R}^d}$.
In this context, $\mathcal{D}(a)$ is an algebra and for $\phi,\psi$ in $\mathcal{D}(a)$, the carr\'e du champs operator is given by
\begin{equation}
\Gamma(\phi,\psi) = \underset{i,j\leq d}{\sum}\alpha_{i,j}\partial_{x_i}\phi\partial_{x_j}\psi+\int_{\mathbbm{R}^d\backslash\{0\}}(\phi(\cdot,\cdot+y)-\phi)(\psi(\cdot,\cdot+y)-\psi)K(\cdot,\cdot,dy).\nonumber
\end{equation}

\begin{proposition}
Under the assumptions of Theorem \ref{Stroock}, and if $(f,g)$ verify Hypothesis \ref{Hpq},   $Pseudo-PDE(f,g)$ admits a unique decoupled mild solution in the sense of Definition \ref{mildsoluv}.
\end{proposition}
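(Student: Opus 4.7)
The plan is to observe that this proposition is a direct application of Theorem \ref{MainTheorem} to the specific Markovian setup of Section \ref{S4a}. All the work has essentially been done upstream: the main theorem was formulated precisely so that once one checks well-posedness of the martingale problem and the growth/moment hypotheses, existence and uniqueness of the decoupled mild solution follow automatically. The only subtlety is to handle the absence of explicit generalized moments on $X$ by exploiting the boundedness assumption in $H^{lip}_b$.

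First I would verify Hypothesis \ref{MPwellposed} for the triplet $(\mathcal{D}(a), a, V)$ with $V_t := t$, which is continuous, non-decreasing, and starts at $0$ as required by Definition \ref{MartingaleProblem}. This is precisely the content of Theorem \ref{Stroock}: under the imposed boundedness/continuity conditions on $\mu$, $\alpha$ and the truncated first moment of the L\'evy kernel $K$, the family $(\mathbbm{P}^{s,x})_{(s,x) \in [0,T] \times \mathbbm{R}^d}$ is a canonical Markov class with transition function measurable in time that solves the well-posed martingale problem associated with $(\mathcal{D}(a), a, V)$.

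Next I would choose $\zeta \equiv 0$ and $\eta \equiv 0$. Under $H^{lip}_b$, the functions $g$ and $f(\cdot,\cdot,0,0)$ are bounded, so the growth bounds $|g(x)| \leq C(1+\zeta(x))$ and $|f(t,x,0,0)| \leq C'(1+\eta(x))$ in Hypothesis \ref{Hpq} hold with $C, C'$ equal to the respective sup-norms, while the Lipschitz condition on $f$ in the last two variables is part of $H^{lip}_b$ itself. Thus $(f,g)$ satisfies $H^{lip}(0,0)$. The moment hypothesis $H^{mom}(0,0)$ is trivially verified since it reduces to $\mathbbm{E}^{s,x}[0] < \infty$, consistently with the Remark following Hypothesis \ref{Hpq} which emphasizes that the bounded framework $H^{lip}_b$ bypasses any integrability requirement on $X$.

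With Hypothesis \ref{MPwellposed}, $H^{mom}(0,0)$, and $H^{lip}(0,0)$ in hand, Theorem \ref{MainTheorem} applies directly and yields existence and uniqueness of a decoupled mild solution of $Pseudo-PDE(f,g)$ in the sense of Definition \ref{mildsoluv}. There is no real obstacle here; the proof is really a matter of bookkeeping, the conceptual point being that the bounded setting allows one to take trivial weights in the moment and growth hypotheses, so the general machinery of Sections \ref{mild}--\ref{mart-mild} applies verbatim.
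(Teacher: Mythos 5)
Your proposal is correct and follows exactly the route of the paper's own proof: verify that Theorem \ref{Stroock} gives a well-posed martingale problem for $(\mathcal{D}(a),a,V_t\equiv t)$ with a time-measurable transition function, note that $H^{lip}_b$ fits the general hypotheses with trivial weight functions (so that $H^{mom}$ holds vacuously), and invoke Theorem \ref{MainTheorem}. The paper's proof is just a terser version of the same bookkeeping.
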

\begin{proof}
$\mathcal{D}(a)$ is an algebra. Moreover $(\mathbbm{P}^{s,x})_{(s,x)\in[0,T]\times\mathbbm{R}^d}$ is a Markov class which is measurable in time, and it solves the well-posed Martingale Problem associated to  $(\mathcal{D}(a),a)$. Therefore our Theorem \ref{MainTheorem} applies.
\end{proof}
We recall that if  $f$ is Lipschitz in $(y,z)$ uniformly in $(t,x)$, and $g, f(\cdot,\cdot,0,0)$ are bounded then $(f,g)$ satisfies Hypothesis \ref{Hpq}.

\subsection{Pseudo-Differential operators and Fractional Laplacian}\label{S4b}

This section  concerns  pseudo-differential operators with negative definite
 symbol, see \cite{jacob2} for an extensive description. A typical example of such 
operators will be the fractional Laplacian $(-\Delta)^{\frac{\alpha}{2}}$ with $\alpha\in]0,2[$, see Chapter 3 in \cite{di2012hitchhikers} for a detailed study of this operator. We will mainly use the notations and vocabulary of N. Jacob in
 \cite{jacob1}, \cite{jacob2} and \cite{jacob2005pseudo}, some results being attributed to W. Hoh \cite{hoh}. We fix $d\in\mathbbm{N}^*$.
$\mathcal{C}^{\infty}_c(\mathbbm{R}^d)$  will denote the space of real functions defined on
 $\mathbbm{R}^d$ which are infinitely continuously differentiable
 with compact support and $\mathcal{S}(\mathbbm{R}^d)$ the Schwartz space
of fast decreasing real smooth functions also defined on  $\mathbbm{R}^d$.
  $\mathcal{F}u$ will  denote the Fourier transform of a function $u$ whenever it is well-defined. For  $u\in L^1(\mathbbm{R}^d)$ we use the convention $\mathcal{F}u(\xi)=\frac{1}{(2\pi)^{\frac{d}{2}}}\int_{\mathbbm{R}^d} e^{-i(x,\xi)}u(x)dx$.

\begin{definition}
 A function $\psi\in\mathcal{C}(\mathbbm{R}^d,\mathbbm{R})$
 will be said 
\textbf{negative definite} if for any $k\in\mathbbm{N}$, $\xi_1,\cdots,\xi_k\in\mathbbm{R}^d$, the matrix 
$(\psi(\xi^j)+\psi(\xi^l)-\psi(\xi^j-\xi^l))_{j,l=1,\cdots,k}$ is  symmetric positive definite.
	\\
	\\
	A function $q\in\mathcal{C}(\mathbbm{R}^d\times \mathbbm{R}^d,\mathbbm{R})$ will be called a \textbf{continuous negative definite symbol} if for any $x\in\mathbbm{R}^d$, $q(x,\cdot)$ is continuous negative definite
	\\
 In this case we introduce
  the pseudo-differential operator $q(\cdot,D)$ defined by 
	\begin{equation}
		q(\cdot,D)(u)(x)=\frac{1}{(2\pi)^{\frac{d}{2}}}\int_{\mathbbm{R}^d} e^{i(x,\xi)}q(x,\xi)\mathcal{F}u(\xi)d\xi.
	\end{equation}
\end{definition}

\begin{remark}
By Theorem 4.5.7 in \cite{jacob1}, 
 $q(\cdot,D)$ maps the space 
 $\mathcal{C}^{\infty}_c(\mathbbm{R}^d)$ 
of smooth functions with compact support 
into itself.
In particular $q(\cdot,D)$ will be defined on
 $\mathcal{C}^{\infty}_c(\mathbbm{R}^d)$. However, the proof of this Theorem 4.5.7 only uses the fact that if $\phi\in\mathcal{C}^{\infty}_c(\mathbbm{R}^d)$ then $\mathcal{F}\phi\in\mathcal{S}(\mathbbm{R}^d)$ and this still holds for 
every $\phi\in\mathcal{S}(\mathbbm{R}^d)$. Therefore $q(\cdot,D)$ is well-defined on $\mathcal{S}(\mathbbm{R}^d)$ and maps it into 
$\mathcal{C}(\mathbbm{R}^d,\mathbbm{R})$.
\end{remark}

A typical example of such pseudo-differential operators is the fractional
 Laplacian defined for some fixed  $\alpha\in]0,2[$ on $\mathcal{S}(\mathbbm{R}^d)$ by 
\begin{equation}
(-\Delta)^{\frac{\alpha}{2}}(u)(x)=\frac{1}{(2\pi)^{\frac{d}{2}}}
\int_{\mathbbm{R}^d} e^{i(x,\xi)}\|\xi\|^{\alpha}\mathcal{F}u(\xi)d\xi.
\end{equation}
Its symbol has no dependence in $x$ and is the continuous negative definite function $\xi\mapsto \|\xi\|^{\alpha}$.
Combining Theorem 4.5.12 and 4.6.6  in \cite{jacob2005pseudo}, one can  state the following.
\begin{theorem}\label{ThmJacob1}
	Let $\psi$ be a continuous negative definite function satisfying for some $r_0,c_0>0$: $\psi(\xi)\geq c_0\|\xi\|^{r_0}$ if $\|\xi\|\geq 1$. Let $M$ be the smallest integer strictly superior to $(\frac{d}{r_0}\vee 2)+d$. Let $q$ be a continuous negative symbol verifying, for some $c,c'>0$ and
 $\gamma:\mathbbm{R}^d\rightarrow \mathbbm{R}^*_+$,  the following items.
	\begin{itemize}
		\item $q(\cdot,0)=0$ and $\underset{x\in\mathbbm{R}^d}{\text{sup }}|q(x,\xi)|\underset{\xi\rightarrow 0}{\longrightarrow}0$;
		\item $q$ is $\mathcal{C}^{2M+1-d}$ in the first variable and for any $\beta\in\mathbbm{N}^d$ with $\|\beta\|\leq 2M+1-d$, $\|\partial^{\beta}_x q\|\leq c(1+\psi)$;
		\item $q(x,\xi)\geq \gamma(x)(1+\psi(x))$ if $x\in \mathbbm{R}^d$, $\|\xi\|\geq 1$;
		\item $q(x,\xi)\leq c'(1+\|\xi\|^2)$ for every $(x,\xi)$.
	\end{itemize}
	Then the homogeneous Martingale Problem associated to $(-q(\cdot,D),\mathcal{S}(\mathbbm{R}^d))$ is well-posed (see Definition \ref{MPhomogene}) and its solution $(\mathbbm{P}^x)_{x\in\mathbbm{R}^d}$ defines a homogeneous Markov class, see Notation \ref{HomogeneNonHomogene}.
\end{theorem}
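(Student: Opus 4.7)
The plan is to obtain the statement by combining two classical results of Jacob with the standard principle that well-posedness of a martingale problem automatically yields the Markov property.

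First, I would appeal to Theorem 4.5.12 of \cite{jacob2005pseudo} to produce, for every starting point $x \in \mathbbm{R}^d$, a probability measure on the canonical space under which $\phi(X_\cdot) + \int_0^\cdot q(\cdot,D)\phi(X_r)\,dr$ is a local martingale for all $\phi \in \mathcal{S}(\mathbbm{R}^d)$. The hypotheses of that theorem are tailored to symbols $q$ dominated pointwise by a fixed continuous negative definite reference function $\psi$: the smoothness requirement $q \in \mathcal{C}^{2M+1-d}$ in $x$ together with $\|\partial_x^\beta q\| \le c(1+\psi)$ for $\|\beta\|\le 2M+1-d$ is precisely what is needed so that $q(\cdot,D)$ is a well-defined operator on an anisotropic Sobolev scale built from $\psi$. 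The lower bound $c_0\|\xi\|^{r_0} \le \psi(\xi)$ for $\|\xi\|\ge 1$ supplies the coercivity required to obtain existence via Hilbert-space / resolvent methods, while the upper bound $q(x,\xi)\le c'(1+\|\xi\|^2)$ and $q(\cdot,0)=0$ are the standard normalizations ensuring that the operator generates a sub-Markov structure.

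Second, I would invoke Theorem 4.6.6 of \cite{jacob2005pseudo} to obtain uniqueness of the solution. The additional coercivity condition $q(x,\xi) \ge \gamma(x)(1+\psi(x))$ for $\|\xi\|\ge 1$ (uniformly in $x$ in the sense encoded by the positive function $\gamma$) is used to invert $q(\cdot,D)-\lambda$ on a sufficiently rich class of functions, giving a resolvent and hence uniqueness in law for the martingale problem. Combining the two results yields, for each $x$, a unique probability $\mathbbm{P}^x$ on $\Omega = \mathbbm{D}(\mathbbm{R}_+,\mathbbm{R}^d)$ solving the homogeneous martingale problem for $(-q(\cdot,D),\mathcal{S}(\mathbbm{R}^d))$ with $\mathbbm{P}^x(X_0 = x) = 1$.

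Finally, I would derive the Markov class property from well-posedness via the classical conditioning argument: for any stopping time $\tau$, a regular conditional probability of $\mathbbm{P}^x$ given $\mathcal{F}_\tau$ again solves, after time $\tau$, the same martingale problem started from $X_\tau$; by uniqueness this r.c.p.d. must coincide with $\mathbbm{P}^{X_\tau}$, which is the Markov property in its strong form (see, e.g., Theorem 4.4.2 of Ethier--Kurtz, or the analogous statement in Stroock--Varadhan). Measurability of $x \mapsto \mathbbm{P}^x$ in this framework follows from uniqueness and a monotone-class argument applied to finite-dimensional cylinder sets, giving the measurable transition function and hence the canonical Markov class structure promised by the statement. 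The only real work is the hypothesis-matching in the first two steps; once existence and uniqueness are granted, the Markov property is automatic, and the main obstacle is simply lining up the technical assumptions of Jacob's theorems with those formulated here, in particular checking that the smoothness order $2M+1-d$ and the coercivity constant $\gamma(x)$ enter Jacob's proofs in exactly the form stated.
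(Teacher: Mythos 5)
Your proposal matches the paper's treatment: the statement is presented there without proof, simply as a combination of Theorems 4.5.12 and 4.6.6 of \cite{jacob2005pseudo} (existence and uniqueness for the martingale problem, respectively), exactly the two results you invoke. Your additional remarks on deriving the Markov class structure from well-posedness via regular conditional probabilities and a measurability argument are standard and consistent with how the paper uses the cited results.
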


We will now introduce the time-inhomogeneous domain which will be used to extend $\mathcal{D}(-q(\cdot,D))=\mathcal{S}(\mathbbm{R}^d)$.


\begin{definition}\label{Ctopo}
	 We will denote by $\mathcal{C}^1([0,T],\mathcal{S}(\mathbbm{R}^d))$ the set of
	functions $\phi\in\mathcal{C}([0,T],\mathcal{S}(\mathbbm{R}^d))$ such that there exists a function
	$\partial_t\phi\in\mathcal{C}([0,T],\mathcal{S}(\mathbbm{R}^d))$ verifying the following. For every $t_0 \in [0,T]$ 
	we have  $\frac{1}{(t - t_0)}(\phi(t)-\phi(t_0))\underset{t\rightarrow t_0}{\overset{\mathcal{S}(\mathbbm{R}^d)}{\longrightarrow}}\partial_t\phi(t_0)$.
\end{definition}

We recall that $\mathcal{S}(\mathbbm{R}^d)$ is a  topological algebra, meaning  that addition, multiplication and multiplication by a scalar are continuous for its topology.

\begin{lemma}\label{LemmaS1}
For any $\phi,\psi\in\mathcal{C}^1([0,T],\mathcal{S}(\mathbbm{R}^d))$, we have $\partial_t(\phi\psi)=\psi\partial_t\phi+\phi\partial_t\psi$.
\end{lemma}
\begin{proof}
The proof is very close to  the one in $\mathbbm{R}$.
\end{proof}

\begin{notation}\label{DefS1}
We set $\mathcal{D}(\partial_t-q(\cdot,D)):=\mathcal{C}^1([0,T],\mathcal{S}(\mathbbm{R}^d))$.
\end{notation}
Elements in $\mathcal{C}([0,T],\mathcal{S}(\mathbbm{R}^d))$ will also be seen as functions of two variables, and since convergence in $\mathcal{S}(\mathbbm{R}^d)$ implies pointwise convergence, the usual notion of partial derivative coincides with the notation $\partial_t$ introduced in Definition \ref{Ctopo}. Any $\phi\in\mathcal{D}(\partial_t-q(\cdot,D))$ clearly verifies
\begin{itemize}
\item $\forall t\in[0,T]$, $\phi(t,\cdot)\in\mathcal{S}(\mathbbm{R}^d)$ and $\forall x\in\mathbbm{R}^d$, $\phi(\cdot,x)\in\mathcal{C}^1([0,T],\mathbbm{R})$;
\item $\forall t\in[0,T]$, $\partial_t\phi(t,\cdot)\in\mathcal{S}(\mathbbm{R}^d)$.
\end{itemize}
Our goal now is to show that $\mathcal{D}(\partial_t-q(\cdot,D)$ also verifies the other items needed to be included in $\mathcal{D}^{max}(\partial_t-q(\cdot,D))$ (see Notation \ref{NotDomain}) and therefore that Corollary \ref{conclusionA4} applies with this domain.

\begin{notation}
Let $\alpha,\beta\in\mathbbm{N}^d$ be multi-indices, we introduce the semi-norm
\begin{equation}
\|\cdot\|_{\alpha,\beta}:
\begin{array}{rcl}
\mathcal{S}(\mathbbm{R}^d)&\longrightarrow &\mathbbm{R}\\
\phi&\longmapsto &\underset{x\in\mathbbm{R}^d}{\text{sup }}|x^{\alpha}\partial_x^{\beta}\phi(x)|.
\end{array}
\end{equation}
\end{notation}
$\mathcal{S}(\mathbbm{R}^d)$ is a Fr\'echet space whose topology is determined
by the family of seminorms $ \|\cdot\|_{\alpha,\beta}$.
In particular those seminorms are continuous.
\\
In what follows, $\mathcal{F}_x$ will denote the Fourier transform taken in the space variable.
\begin{proposition}\label{PropS0}
Let $\phi\in\mathcal{C}([0,T],\mathcal{S}(\mathbbm{R}^d))$. 
Then $\mathcal{F}_x\phi\in\mathcal{C}([0,T],\mathcal{S}(\mathbbm{R}^d))$. Moreover if $\phi\in\mathcal{C}^1([0,T],\mathcal{S}(\mathbbm{R}^d))$, then $\mathcal{F}_x\phi\in\mathcal{C}^1([0,T],\mathcal{S}(\mathbbm{R}^d))$ and 
\\
$\partial_t\mathcal{F}_x\phi=\mathcal{F}_x\partial_t\phi$.
\end{proposition}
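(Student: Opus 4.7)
The plan is to reduce everything to the classical fact that the Fourier transform $\mathcal{F}$ is a continuous linear self-map of the Fr\'echet space $\mathcal{S}(\mathbbm{R}^d)$, and then to observe that any continuous linear map between Hausdorff topological vector spaces preserves both continuity and differentiability of curves in the sense of Definition \ref{Ctopo}.

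First I would recall (or quote as a classical fact about Schwartz space) the quantitative statement that, for any multi-indices $\alpha,\beta$, there exist finitely many multi-indices $\alpha',\beta'$ and a constant $C>0$ with
\begin{equation*}
\|\mathcal{F}u\|_{\alpha,\beta}\leq C\sum_{\alpha',\beta'}\|u\|_{\alpha',\beta'},\qquad u\in\mathcal{S}(\mathbbm{R}^d).
\end{equation*}
This follows from the identities $\xi^{\alpha}\partial_{\xi}^{\beta}\mathcal{F}u=(-i)^{|\alpha|+|\beta|}\mathcal{F}\bigl(\partial_x^{\alpha}(x^{\beta}u)\bigr)$, the elementary bound $\|\mathcal{F}v\|_{0,0}\leq (2\pi)^{-d/2}\|v\|_{L^1}$, and the continuous inclusion $\mathcal{S}(\mathbbm{R}^d)\hookrightarrow L^1(\mathbbm{R}^d)$. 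Since the topology of $\mathcal{S}(\mathbbm{R}^d)$ is generated by the seminorms $\|\cdot\|_{\alpha,\beta}$, this shows that $\mathcal{F}:\mathcal{S}(\mathbbm{R}^d)\to\mathcal{S}(\mathbbm{R}^d)$ is continuous.

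The first assertion is then immediate: for $\phi\in\mathcal{C}([0,T],\mathcal{S}(\mathbbm{R}^d))$ and $t_n\to t_0$, one has $\phi(t_n)\to\phi(t_0)$ in $\mathcal{S}(\mathbbm{R}^d)$, hence $\mathcal{F}_x\phi(t_n)=\mathcal{F}(\phi(t_n))\to\mathcal{F}(\phi(t_0))=\mathcal{F}_x\phi(t_0)$ in $\mathcal{S}(\mathbbm{R}^d)$. For the second assertion, suppose that $\phi\in\mathcal{C}^1([0,T],\mathcal{S}(\mathbbm{R}^d))$. By linearity of $\mathcal{F}$,
\begin{equation*}
\frac{1}{t-t_0}\bigl(\mathcal{F}_x\phi(t)-\mathcal{F}_x\phi(t_0)\bigr)=\mathcal{F}\!\left(\frac{1}{t-t_0}(\phi(t)-\phi(t_0))\right),
\end{equation*}
and the argument of $\mathcal{F}$ converges in $\mathcal{S}(\mathbbm{R}^d)$ to $\partial_t\phi(t_0)$ by hypothesis. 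Continuity of $\mathcal{F}$ on $\mathcal{S}(\mathbbm{R}^d)$ then yields convergence of the left-hand side to $\mathcal{F}_x\partial_t\phi(t_0)$ in $\mathcal{S}(\mathbbm{R}^d)$, which is exactly the differentiability required in Definition \ref{Ctopo} and identifies the derivative as $\partial_t\mathcal{F}_x\phi(t_0)=\mathcal{F}_x\partial_t\phi(t_0)$. Continuity of $t\mapsto\mathcal{F}_x\partial_t\phi(t)$ follows by applying the first part of the proposition to $\partial_t\phi\in\mathcal{C}([0,T],\mathcal{S}(\mathbbm{R}^d))$, so that $\mathcal{F}_x\phi\in\mathcal{C}^1([0,T],\mathcal{S}(\mathbbm{R}^d))$.

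No real obstacle is expected here; the only point requiring care is that the convergences on both sides of the above difference quotient must be understood in the Fr\'echet topology of $\mathcal{S}(\mathbbm{R}^d)$, for which the continuity estimate on $\mathcal{F}$ recalled above is the essential quantitative input. The argument is, in substance, the standard fact that any continuous linear operator between topological vector spaces commutes with $\tau$-valued limits and $\tau$-valued derivatives.
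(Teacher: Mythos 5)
Your proof is correct and follows essentially the same route as the paper: both arguments rest on the continuity and linearity of $\mathcal{F}_x$ on the Fr\'echet space $\mathcal{S}(\mathbbm{R}^d)$, used to push the $\mathcal{S}(\mathbbm{R}^d)$-valued limits and difference quotients through the Fourier transform. The only difference is that you spell out the seminorm estimates behind the continuity of $\mathcal{F}$, which the paper simply takes as a classical fact.
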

\begin{proof}
$\mathcal{F}_x:\mathcal{S}(\mathbbm{R}^d)\longrightarrow \mathcal{S}(\mathbbm{R}^d)$ is continuous, so $\phi\in\mathcal{C}([0,T],\mathcal{S}(\mathbbm{R}^d)$ implies $\mathcal{F}_x\phi\in\mathcal{C}([0,T],\mathcal{S}(\mathbbm{R}^d))$. If $\phi\in\mathcal{C}^1([0,T],\mathcal{S}(\mathbbm{R}^d))$ then $\partial_t\phi\in\mathcal{C}([0,T],\mathcal{S}(\mathbbm{R}^d)$ so $\mathcal{F}_x\partial_t\phi\in\mathcal{C}([0,T],\mathcal{S}(\mathbbm{R}^d)$. Then for any $t_0\in[0,T]$, the convergence  
\\
$\frac{1}{t-t_0}(\phi(t,\cdot)-\phi(t_0,\cdot))\underset{t\rightarrow t_0}{\overset{\mathcal{S}(\mathbbm{R}^d)}{\longrightarrow}}\partial_t\phi(t_0,\cdot)$ is preserved by the continuous mapping $\mathcal{F}_x$ meaning  that (by linearity)
\\
$\frac{1}{t-t_0}(\mathcal{F}_x\phi(t,\cdot)-\mathcal{F}_x\phi(t_0,\cdot))\underset{t\rightarrow t_0}{\overset{\mathcal{S}(\mathbbm{R}^d)}{\longrightarrow}}\mathcal{F}_x\partial_t\phi(t_0,\cdot)$. Since $\mathcal{F}_x\partial_t\phi\in\mathcal{C}([0,T],\mathcal{S}(\mathbbm{R}^d))$, we have shown that $\mathcal{F}_x\phi\in\mathcal{C}^1([0,T],\mathcal{S}(\mathbbm{R}^d))$ and $\partial_t\mathcal{F}_x\phi=\mathcal{F}_x\partial_t\phi$.
\end{proof}

\begin{proposition}\label{PropS1}
If $\phi\in\mathcal{C}([0,T],\mathcal{S}(\mathbbm{R}^d))$, then  for any $\alpha,\beta\in\mathbbm{N}^d$, 
\\
$(t,x)\longmapsto x^{\alpha}\partial_x^{\beta}\phi(t,x)$ 
is bounded. 
\end{proposition}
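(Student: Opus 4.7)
The plan is to exploit the continuity of the seminorms $\|\cdot\|_{\alpha,\beta}$ on the Fréchet space $\mathcal{S}(\mathbbm{R}^d)$ together with the compactness of $[0,T]$. Concretely, since $\phi\in\mathcal{C}([0,T],\mathcal{S}(\mathbbm{R}^d))$ means that $t\mapsto \phi(t,\cdot)$ is continuous from $[0,T]$ into the Fréchet space $\mathcal{S}(\mathbbm{R}^d)$, and since each $\|\cdot\|_{\alpha,\beta}$ is by definition one of the seminorms determining the topology of $\mathcal{S}(\mathbbm{R}^d)$ and is therefore continuous, the composition
\begin{equation*}
t\longmapsto \|\phi(t,\cdot)\|_{\alpha,\beta} = \sup_{x\in\mathbbm{R}^d}|x^{\alpha}\partial_x^{\beta}\phi(t,x)|
\end{equation*}
is a continuous real-valued function on $[0,T]$.

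Since $[0,T]$ is compact, this continuous function attains its maximum, so there exists $C_{\alpha,\beta}<\infty$ with $\|\phi(t,\cdot)\|_{\alpha,\beta}\le C_{\alpha,\beta}$ for all $t\in[0,T]$. Unwinding the definition of the seminorm yields
\begin{equation*}
\sup_{(t,x)\in[0,T]\times \mathbbm{R}^d} |x^{\alpha}\partial_x^{\beta}\phi(t,x)| \le C_{\alpha,\beta} < \infty,
\end{equation*}
which is exactly the desired boundedness.

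There is essentially no obstacle here beyond verifying the two facts I used: that $\|\cdot\|_{\alpha,\beta}$ is continuous on $\mathcal{S}(\mathbbm{R}^d)$, and that continuous maps into $\mathcal{S}(\mathbbm{R}^d)$ compose with these seminorms to give continuous real-valued functions. The former is immediate because $\mathcal{S}(\mathbbm{R}^d)$ is a Fréchet space whose topology is generated by the family $\{\|\cdot\|_{\alpha,\beta}\}_{\alpha,\beta\in\mathbbm{N}^d}$, as recalled in the paragraph immediately preceding the statement. The latter is a standard property of continuous maps between topological spaces. No further structure (such as differentiability in $t$) is needed.
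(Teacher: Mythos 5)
Your proof is correct and follows exactly the same route as the paper: compose the continuous seminorm $\|\cdot\|_{\alpha,\beta}$ with the continuous map $t\mapsto\phi(t,\cdot)$ and use compactness of $[0,T]$ to bound the resulting continuous real-valued function. No differences worth noting.
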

\begin{proof}
Let $\alpha,\beta$ be fixed. Since the maps 
$\|\cdot\|_{\alpha,\beta}:\mathcal{S}(\mathbbm{R}^d)\rightarrow\mathbbm{R}$ are
 continuous, for every  $\phi\in\mathcal{C}([0,T],\mathcal{S}(\mathbbm{R}^d))$,
 the application $t\mapsto \|\phi(t,\cdot)\|_{\alpha,\beta}$ is continuous on 
the compact interval $[0,T]$ and therefore bounded,
which yields the result.

\end{proof}

\begin{proposition}\label{PropS2}
If $\phi\in\mathcal{C}([0,T],\mathcal{S}(\mathbbm{R}^d))$ and $\alpha,\beta\in\mathbbm{N}^d$, then 
there exist non-negative functions $\psi_{\alpha,\beta}\in L^1(\mathbbm{R}^d)$ such that
 for every $(t,x)\in [0,T]\times \mathbbm{R}^d$, $|x^{\alpha}\partial_x^{\beta}\phi(t,x)|\leq 
\psi_{\alpha,\beta}(x)$.
\end{proposition}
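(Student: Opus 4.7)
The plan is to dominate $|x^{\alpha}\partial_x^{\beta}\phi(t,x)|$ uniformly in $t\in[0,T]$ by a function of $x$ alone that has enough polynomial decay to lie in $L^1(\mathbbm{R}^d)$. The key input is Proposition \ref{PropS1}, which already guarantees that for every pair of multi-indices $\gamma,\delta$ the quantity
\[
M_{\gamma,\delta}:=\underset{(t,x)\in[0,T]\times\mathbbm{R}^d}{\text{sup}}|x^{\gamma}\partial_x^{\delta}\phi(t,x)|
\]
is finite; so it is enough to convert a suitable combination of these uniform bounds into an integrable majorant.

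First I would fix $\alpha,\beta$ and expand
\[
(1+\|x\|^2)^d\,x^{\alpha}=\sum_{\gamma}c_{\gamma}\,x^{\gamma},
\]
a finite linear combination of monomials, obtained from the binomial identity $(1+\|x\|^2)^d=\sum_{k=0}^{d}\binom{d}{k}\|x\|^{2k}$ together with the multinomial expansion of $\|x\|^{2k}=(x_1^2+\cdots+x_d^2)^k$. Multiplying by $|\partial_x^{\beta}\phi(t,x)|$ and applying Proposition \ref{PropS1} to each term $|x^{\gamma}\partial_x^{\beta}\phi(t,x)|\leq M_{\gamma,\beta}$, I obtain with $C_{\alpha,\beta}:=\sum_{\gamma}|c_{\gamma}|M_{\gamma,\beta}$ the pointwise estimate
\[
(1+\|x\|^2)^d\,|x^{\alpha}\partial_x^{\beta}\phi(t,x)|\leq C_{\alpha,\beta}, \qquad (t,x)\in[0,T]\times\mathbbm{R}^d.
\]

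Then I would define
\[
\psi_{\alpha,\beta}(x):=\frac{C_{\alpha,\beta}}{(1+\|x\|^2)^d},
\]
which is nonnegative and, by the previous display, dominates $|x^{\alpha}\partial_x^{\beta}\phi(t,\cdot)|$ for every $t\in[0,T]$. Integrability is routine: passing to polar coordinates gives
\[
\int_{\mathbbm{R}^d}\frac{dx}{(1+\|x\|^2)^d}=\sigma_{d-1}\int_0^{\infty}\frac{r^{d-1}}{(1+r^2)^d}\,dr,
\]
and the integrand is bounded near $0$ and of order $r^{-d-1}$ at infinity, hence the integral is finite.

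There is no real obstacle in this argument; it amounts to the standard Schwartz-space trick of trading an arbitrary polynomial weight for integrable decay, the only thing to notice being that the continuity of $\phi$ with values in $\mathcal{S}(\mathbbm{R}^d)$ has already been converted by Proposition \ref{PropS1} into the required uniform bounds on $[0,T]\times\mathbbm{R}^d$. An alternative, slightly more economical variant would be to invoke Proposition \ref{PropS1} directly with a multi-index $\alpha'$ satisfying $|\alpha'|\geq|\alpha|+d+1$ and combine the resulting bound with the trivial estimate $M_{\alpha,\beta}$ on a bounded set, but the weight $(1+\|x\|^2)^d$ above furnishes a single clean dominating function.
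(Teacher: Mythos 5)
Your proof is correct. It rests on the same single ingredient as the paper's proof --- Proposition \ref{PropS1} applied to multi-indices larger than $\alpha$, so as to trade polynomial weight for integrable decay uniformly in $t$ --- but the dominating function is built differently. The paper splits $\mathbbm{R}^d$ into the cube $[-1,1]^d$ and its complement and takes $\psi_{\alpha,\beta}=C\bigl(\mathds{1}_{[-1,1]^d}+\prod_{i\le d}x_i^{-2}\,\mathds{1}_{\mathbbm{R}^d\setminus[-1,1]^d}\bigr)$, using the uniform bound on $x^{\alpha+(2,\dots,2)}\partial_x^{\beta}\phi$ supplied by Proposition \ref{PropS1}. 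Your global weight $(1+\|x\|^2)^{-d}$ accomplishes the same thing in one stroke and is in fact more robust: for $d\ge 2$ the complement of the cube contains points at which some coordinate $x_i$ vanishes, so the majorant $\prod_{i\le d} x_i^{-2}\,\mathds{1}_{\mathbbm{R}^d\setminus[-1,1]^d}$ as printed is neither bounded nor integrable (one would have to partition the complement according to which coordinates exceed $1$ in modulus and divide only by those squares). Your version sidesteps this entirely, at the modest cost of the multinomial expansion of $(1+\|x\|^2)^d x^{\alpha}$, and your integrability check is right: the radial integrand $r^{d-1}(1+r^2)^{-d}$ decays like $r^{-d-1}$, so $\int_{\mathbbm{R}^d}(1+\|x\|^2)^{-d}\,dx<\infty$.
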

\begin{proof}
We decompose 
\begin{equation}
\begin{array}{rcl}
|x^{\alpha}\partial_x^{\beta}\phi(t,x)| &=& |x^{\alpha}\partial_x^{\beta}\phi(t,x)|\mathds{1}_{[-1,1]^d}(x)+|x^{\alpha+(2,\cdots,2)}\partial_x^{\beta}\phi(t,x)|\frac{1}{\underset{i\leq d}{\Pi}x_i^2}\mathds{1}_{\mathbbm{R}^d\backslash[-1,1]^d}(x)\\
&\leq& C(\mathds{1}_{[-1,1]^d}(x)+\frac{1}{\underset{i\leq d}{\Pi}x_i^2}\mathds{1}_{\mathbbm{R}^d\backslash[-1,1]^d}(x)),
\end{array}
\end{equation}
where $C$ is some constant which exists  thanks to Proposition \ref{PropS1}.
\end{proof}

\begin{proposition}\label{PropS3}
Let $q$ be a continuous negative definite symbol verifying the assumptions of Theorem \ref{ThmJacob1} and let $\phi\in\mathcal{C}^1([0,T],\mathcal{S}(\mathbbm{R}^d))$. Then for any $x\in\mathbbm{R}^d$, 
$t\mapsto q(\cdot,D)\phi(t,x)\in\mathcal{C}^1([0,T],\mathbbm{R})$ and $\partial_tq(\cdot,D)\phi=q(\cdot,D)\partial_t\phi$.
\end{proposition}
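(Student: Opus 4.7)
\begin{prooff}{Proposition \ref{PropS3}}
The plan is to prove both claims simultaneously by differentiating under the integral sign in the defining formula
\begin{equation*}
q(\cdot,D)\phi(t,x)=\frac{1}{(2\pi)^{d/2}}\int_{\mathbbm{R}^d}e^{i(x,\xi)}q(x,\xi)\mathcal{F}_x\phi(t,\xi)d\xi,
\end{equation*}
for each fixed $x\in\mathbbm{R}^d$. The key ingredients will be: the fact that $\mathcal{F}_x\phi$ lies in $\mathcal{C}^1([0,T],\mathcal{S}(\mathbbm{R}^d))$ with $\partial_t\mathcal{F}_x\phi=\mathcal{F}_x\partial_t\phi$ (Proposition \ref{PropS0}), the bound $|q(x,\xi)|\leq c'(1+\|\xi\|^2)$ provided by the last assumption of Theorem \ref{ThmJacob1}, and the $L^1$-domination available in Proposition \ref{PropS2}.

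First I would fix $(x,t_0)\in\mathbbm{R}^d\times[0,T]$ and write the difference quotient as
\begin{equation*}
\frac{q(\cdot,D)\phi(t,x)-q(\cdot,D)\phi(t_0,x)}{t-t_0}=\frac{1}{(2\pi)^{d/2}}\int_{\mathbbm{R}^d}e^{i(x,\xi)}q(x,\xi)\frac{\mathcal{F}_x\phi(t,\xi)-\mathcal{F}_x\phi(t_0,\xi)}{t-t_0}d\xi.
\end{equation*}
By Proposition \ref{PropS0} the integrand converges pointwise in $\xi$ to $e^{i(x,\xi)}q(x,\xi)\mathcal{F}_x\partial_t\phi(t_0,\xi)$. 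To exchange limit and integral I would use the mean value estimate
\begin{equation*}
\left|\frac{\mathcal{F}_x\phi(t,\xi)-\mathcal{F}_x\phi(t_0,\xi)}{t-t_0}\right|\leq \sup_{s\in[0,T]}|\mathcal{F}_x\partial_s\phi(s,\xi)|,
\end{equation*}
coming from $\mathcal{F}_x\phi(t,\xi)-\mathcal{F}_x\phi(t_0,\xi)=\int_{t_0}^t\mathcal{F}_x\partial_s\phi(s,\xi)ds$ (which itself follows from the definition of $\mathcal{C}^1([0,T],\mathcal{S}(\mathbbm{R}^d))$ together with continuity of evaluation at $\xi$).

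The main technical step is then to produce an $L^1(\mathbbm{R}^d)$ majorant of $(1+\|\xi\|^2)\sup_{s\in[0,T]}|\mathcal{F}_x\partial_s\phi(s,\xi)|$. Since $\mathcal{F}_x\partial_t\phi\in\mathcal{C}([0,T],\mathcal{S}(\mathbbm{R}^d))$ by Proposition \ref{PropS0}, one may apply Proposition \ref{PropS2} with $\beta=0$ and the multi-indices $\alpha=0$ together with $\alpha=2e_i$ ($i=1,\dots,d$), producing non-negative $L^1$ functions $\psi_{0,0}$ and $\psi_{2e_i,0}$ that pointwise dominate $|\mathcal{F}_x\partial_s\phi(s,\xi)|$ and $|\xi_i^2\mathcal{F}_x\partial_s\phi(s,\xi)|$ uniformly in $s$. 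Summing yields an $L^1$ bound on $(1+\|\xi\|^2)\sup_{s}|\mathcal{F}_x\partial_s\phi(s,\xi)|$, which when combined with $|q(x,\xi)|\leq c'(1+\|\xi\|^2)$ gives a $\xi$-integrable domination (for the fixed $x$). Dominated convergence then delivers
\begin{equation*}
\lim_{t\to t_0}\frac{q(\cdot,D)\phi(t,x)-q(\cdot,D)\phi(t_0,x)}{t-t_0}=q(\cdot,D)\partial_t\phi(t_0,x).
\end{equation*}

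Finally, to conclude that $t\mapsto q(\cdot,D)\phi(t,x)$ belongs to $\mathcal{C}^1([0,T],\mathbbm{R})$, it remains to show continuity of $t\mapsto q(\cdot,D)\partial_t\phi(t,x)$. Since $\partial_t\phi\in\mathcal{C}([0,T],\mathcal{S}(\mathbbm{R}^d))$, Proposition \ref{PropS0} gives continuity of $t\mapsto\mathcal{F}_x\partial_t\phi(t,\xi)$ pointwise in $\xi$, and the same $L^1$ majorant constructed above (together with $|q(x,\xi)|\leq c'(1+\|\xi\|^2)$) allows one more application of dominated convergence to pass the limit inside the integral. I do not expect any serious obstacle beyond carefully assembling the domination; in particular the hypotheses $|q(x,\xi)|\leq c'(1+\|\xi\|^2)$ and the rapid decay of Schwartz-valued continuous maps (Proposition \ref{PropS2}) are tailored precisely for this kind of argument.
\end{prooff}
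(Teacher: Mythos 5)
Your proposal is correct and follows essentially the same route as the paper: both rest on Proposition \ref{PropS0} (to identify $\partial_t\mathcal{F}_x\phi=\mathcal{F}_x\partial_t\phi$ and place it in $\mathcal{C}([0,T],\mathcal{S}(\mathbbm{R}^d))$), the quadratic bound $|q(x,\xi)|\leq c'(1+\|\xi\|^2)$, and Proposition \ref{PropS2} to build an $L^1$ majorant, concluding by differentiation under the integral sign. The only difference is cosmetic: you unpack the standard parameter-differentiation theorem into an explicit difference-quotient and dominated-convergence argument, where the paper simply invokes it.
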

\begin{proof}
We fix $\phi\in\mathcal{C}^1([0,T],\mathcal{S}(\mathbbm{R}^d))$, and $x\in\mathbbm{R}^d$. We wish to show that for any $\xi\in\mathbbm{R}^d$, $t\longmapsto \frac{1}{(2\pi)^{\frac{d}{2}}}\int_{\mathbbm{R}^d} e^{i(x,\xi)}q(x,\xi)\mathcal{F}_x\phi(t,\xi)d\xi$ is $\mathcal{C}^1$ with derivative \\ 
$ t\longmapsto \frac{1}{(2\pi)^{\frac{d}{2}}}\int_{\mathbbm{R}^d} e^{i(x,\xi)}q(x,\xi)\mathcal{F}_x\partial_t\phi(t,\xi)d\xi. $
Since $\phi\in\mathcal{C}^1([0,T],\mathcal{S}(\mathbbm{R}^d))$, then $\partial_t\phi\in\mathcal{C}([0,T],\mathcal{S}(\mathbbm{R}^d))$ and by Proposition \ref{PropS0}, $\mathcal{F}_x\partial_t\phi\in\mathcal{C}([0,T],\mathcal{S}(\mathbbm{R}^d))$.
Moreover since $q$ verifies the assumptions of Theorem \ref{ThmJacob1}, then $|q(x,\xi)|$ is bounded by $c'(1+\|\xi\|^2)$ for some constant $c'$. 
Therefore by Proposition \ref{PropS2}, there exists a non-negative $\psi\in L^1(\mathbbm{R}^d)$ such that for every $t,\xi$, $|q(x,\xi)\mathcal{F}_x\partial_t\phi(t,\xi)|\leq \psi(\xi)$. Since by Proposition \ref{PropS0}, $\mathcal{F}_x\partial_t\phi=\partial_t\mathcal{F}_x\phi$, this implies that for any $(t,\xi)$, $|\partial_te^{i(x,\xi)}q(x,\xi)\mathcal{F}_x\phi(t,\xi)|\leq \psi(\xi)$. So by the 
 theorem about the differentiation of integrals depending on a parameter, 
 for any $\xi\in\mathbbm{R}^d$, $t\longmapsto \frac{1}{(2\pi)^{\frac{d}{2}}}\int_{\mathbbm{R}^d} e^{i(x,\xi)}q(x,\xi)\mathcal{F}_x\phi(t,\xi)d\xi$ is of class 
 $\mathcal{C}^1$ with derivative \\
 $t\longmapsto \frac{1}{(2\pi)^{\frac{d}{2}}}\int_{\mathbbm{R}^d} e^{i(x,\xi)}q(x,\xi)\mathcal{F}_x\partial_t\phi(t,\xi)d\xi$.

\end{proof}

\begin{proposition}\label{PropS4}
Let $q$ be a continuous negative definite symbol verifying the assumptions of Theorem \ref{ThmJacob1} and let $\phi\in\mathcal{C}^1([0,T],\mathcal{S}(\mathbbm{R}^d))$. Then $\phi$, $\partial_t\phi$, $q(\cdot,D)\phi$ and $q(\cdot,D)\partial_t\phi$ are bounded.
\end{proposition}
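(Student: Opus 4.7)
The plan is to handle the four quantities in two batches, relying essentially on Propositions \ref{PropS0}, \ref{PropS1}, and \ref{PropS2} already established.

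First, the boundedness of $\phi$ and $\partial_t\phi$ on $[0,T]\times\mathbbm{R}^d$ is immediate. Indeed, $\phi\in\mathcal{C}^1([0,T],\mathcal{S}(\mathbbm{R}^d))$ means that both $\phi$ and $\partial_t\phi$ lie in $\mathcal{C}([0,T],\mathcal{S}(\mathbbm{R}^d))$, so applying Proposition \ref{PropS1} with multi-indices $\alpha=\beta=0$ to each of them delivers the desired uniform bound.

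The second, slightly more delicate step is to control $q(\cdot,D)\phi$ uniformly in $(t,x)$. Starting from the definition
\begin{equation*}
q(\cdot,D)\phi(t,x)=\frac{1}{(2\pi)^{d/2}}\int_{\mathbbm{R}^d}e^{i(x,\xi)}q(x,\xi)\mathcal{F}_x\phi(t,\xi)\,d\xi,
\end{equation*}
and using that $|e^{i(x,\xi)}|=1$ together with the growth estimate $|q(x,\xi)|\leq c'(1+\|\xi\|^2)$ from the hypotheses of Theorem \ref{ThmJacob1}, we get
\begin{equation*}
|q(\cdot,D)\phi(t,x)|\leq \frac{c'}{(2\pi)^{d/2}}\int_{\mathbbm{R}^d}(1+\|\xi\|^2)|\mathcal{F}_x\phi(t,\xi)|\,d\xi,
\end{equation*}
so the remaining task is to show that the right-hand side is finite and bounded uniformly in $t$. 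By Proposition \ref{PropS0}, $\mathcal{F}_x\phi\in\mathcal{C}([0,T],\mathcal{S}(\mathbbm{R}^d))$, and applying Proposition \ref{PropS2} to $\mathcal{F}_x\phi$ with $(\alpha,\beta)=(0,0)$ and with $\alpha=2e_i$, $\beta=0$ for each coordinate $i$ yields non-negative $L^1$ functions $\psi_0,\psi_1,\dots,\psi_d$ on $\mathbbm{R}^d$ dominating respectively $|\mathcal{F}_x\phi(t,\cdot)|$ and $\xi_i^2|\mathcal{F}_x\phi(t,\cdot)|$ for all $t\in[0,T]$. Thus $(1+\|\xi\|^2)|\mathcal{F}_x\phi(t,\xi)|\leq \psi_0(\xi)+\sum_{i=1}^d\psi_i(\xi)$, an integrable function independent of $(t,x)$, which provides the required uniform bound.

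Finally, the same argument applies verbatim to $q(\cdot,D)\partial_t\phi$: since $\partial_t\phi\in\mathcal{C}([0,T],\mathcal{S}(\mathbbm{R}^d))$, Proposition \ref{PropS0} gives $\mathcal{F}_x\partial_t\phi\in\mathcal{C}([0,T],\mathcal{S}(\mathbbm{R}^d))$, and Proposition \ref{PropS2} again furnishes integrable $(t,x)$-independent dominators. No genuine obstacle arises; the only care to be taken is to choose enough multi-indices $(\alpha,\beta)$ in the invocation of Proposition \ref{PropS2} to kill the quadratic growth of $q$ in $\xi$, which is precisely what the decomposition $1+\|\xi\|^2$ above makes transparent.
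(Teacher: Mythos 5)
Your proof is correct and follows essentially the same route as the paper: boundedness of $\phi$ and $\partial_t\phi$ from Proposition \ref{PropS1}, then the estimate $|q(x,\xi)|\leq c'(1+\|\xi\|^2)$ combined with Propositions \ref{PropS0} and \ref{PropS2} to dominate the integrand by a $t$-independent $L^1$ function. Your explicit decomposition $1+\|\xi\|^2=1+\sum_i\xi_i^2$ with the multi-indices $(0,0)$ and $(2e_i,0)$ is in fact slightly more careful than the paper's one-line invocation of an integrable dominator.
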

\begin{proof}
Proposition \ref{PropS1} implies that any element of $\mathcal{C}([0,T],\mathcal{S}(\mathbbm{R}^d))$ is bounded, so we immediately deduce that $\phi$ and $\partial_t\phi$ are bounded.
\\
 Since $q$ verifies the assumptions of Theorem \ref{ThmJacob1},
  for any fixed $(t,x)\in[0,T]\times \mathbbm{R}^d$,  we have
\begin{equation}
\begin{array}{rcl}
|q(\cdot,D)\phi(t,x)|&=& \left|\frac{1}{(2\pi)^{\frac{d}{2}}}\int_{\mathbbm{R}^d} e^{i(x,\xi)}q(x,\xi)\mathcal{F}_x\phi(t,\xi)d\xi\right|\\
&\leq& C\int_{\mathbbm{R}^d}(1+\|\xi\|^2)|\mathcal{F}_x\phi(t,\xi)|d\xi,
\end{array}
\end{equation}
for some constant $C$.
 Since $\phi\in\mathcal{C}([0,T],\mathcal{S}(\mathbbm{R}^d))$ then,
 by Proposition \ref{PropS0},
 $\mathcal{F}_x\phi$ also belongs to 
 $\mathcal{C}([0,T],\mathcal{S}(\mathbbm{R}^d))$, and by Proposition \ref{PropS1}, there exists a positive $\psi\in L^1(\mathbbm{R}^d)$ such that for any $(t,\xi)$, $(1+\|\xi\|^2)|\mathcal{F}_x\phi(t,\xi)|\leq \psi(\xi)$, so for any $(t,x)$, $|q(\cdot,D)\phi(t,x)|\leq \|\psi\|_1$.
\\
\\
Similar arguments hold replacing $\phi$ with $\partial_t\phi$ since it also belongs to $\mathcal{C}([0,T],\mathcal{S}(\mathbbm{R}^d))$.
\end{proof}

%

\begin{remark} \label{RSobolev}
  $\mathcal{C}^1([0,T],\mathcal{S}(\mathbbm{R}^d))$ seems
to be a domain which is particularly appropriate  for time-dependent Fourier 
analysis and it fits well for our framework. 
 On the other hand it is not so fundamental to require such regularity
for classical solutions for Pseudo-PDEs, so that we could
consider a larger domain. For example  
the Fr\'echet algebra $\mathcal{S}(\mathbbm{R}^d)$ could be replaced
with the  Banach algebra $W^{d+3,1}(\mathbbm{R}^d)\bigcap W^{d+3,\infty}(\mathbbm{R}^d)$
in all the previous proofs.
\\
\end{remark}

\begin{corollary}\label{CoroS1}
Let $q$ be a continuous negative definite symbol verifying the hypotheses of Theorem \ref{ThmJacob1}. Then the properties below are valid.
$\mathcal{D}(\partial_t-q(\cdot,D))$
 is a linear algebra included in $\mathcal{D}^{max}(\partial_t-q(\cdot,D))$ as defined in Notation \ref{NotDomain}.
\end{corollary}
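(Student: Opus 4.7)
The plan is to split the claim into its two parts and treat each separately, relying on the preparatory Propositions \ref{PropS0}--\ref{PropS4} and Lemma \ref{LemmaS1}.

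For the algebra structure, I would simply invoke Lemma \ref{LemmaS1} applied to the Fr\'echet (hence Hausdorff topological) algebra $\mathcal{A}=\mathcal{S}(\mathbbm{R}^d)$. Since the pointwise product of two Schwartz functions is again Schwartz and the multiplication is continuous in the Fr\'echet topology, Lemma \ref{LemmaS1} directly yields that $\mathcal{C}^1([0,T],\mathcal{S}(\mathbbm{R}^d)) = \mathcal{D}(\partial_t-q(\cdot,D))$ is closed under pointwise multiplication (with the Leibniz rule) and linear combinations, hence is a linear algebra.

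For the inclusion in $\mathcal{D}^{max}(\partial_t-q(\cdot,D))$, I would unpack that maximal domain: a function $\phi:[0,T]\times\mathbbm{R}^d\rightarrow\mathbbm{R}$ belongs to it provided $\phi$ is Borel and bounded, $\phi(t,\cdot)$ lies in the spatial domain $\mathcal{S}(\mathbbm{R}^d)$ of $q(\cdot,D)$ for every $t$, the derivative $\partial_t\phi(t,x)$ exists in the classical pointwise sense and is Borel and bounded, and finally $(\partial_t-q(\cdot,D))\phi$ is Borel and bounded. Given $\phi\in\mathcal{C}^1([0,T],\mathcal{S}(\mathbbm{R}^d))$, Definition \ref{Ctopo} together with the fact that the Dirac evaluation $\phi\mapsto\phi(x)$ is continuous on $\mathcal{S}(\mathbbm{R}^d)$ gives at once that $t\mapsto\phi(t,x)$ is $\mathcal{C}^1$ in the classical sense with derivative $\partial_t\phi(t,x)$, and that $\phi(t,\cdot)\in\mathcal{S}(\mathbbm{R}^d)$. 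Boundedness of $\phi,\partial_t\phi,q(\cdot,D)\phi$ is exactly Proposition \ref{PropS4}, so $(\partial_t-q(\cdot,D))\phi$ is bounded.

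The only subtle point is Borel measurability on the product space $[0,T]\times\mathbbm{R}^d$. For $\phi$ and $\partial_t\phi$, continuity in $t$ (in the Fr\'echet topology, hence pointwise) combined with smoothness in $x$ gives a Carath\'eodory function, which is jointly Borel. For $q(\cdot,D)\phi$, Proposition \ref{PropS3} says $t\mapsto q(\cdot,D)\phi(t,x)$ is continuous (even $\mathcal{C}^1$) for each fixed $x$, while $x\mapsto q(\cdot,D)\phi(t,x)$ is continuous since $q(\cdot,D)$ maps $\mathcal{S}(\mathbbm{R}^d)$ into $\mathcal{C}(\mathbbm{R}^d,\mathbbm{R})$; hence Carath\'eodory measurability again gives joint Borel measurability. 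I expect the main (mild) obstacle to be precisely this verification of joint measurability, and more specifically confirming the precise list of requirements in Notation \ref{NotDomain}; everything else reduces to a direct application of the preparatory propositions already established.
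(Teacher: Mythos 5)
Your proposal is correct and follows essentially the same route as the paper, whose proof is precisely the combination of Lemma \ref{LemmaS1} (algebra structure), Propositions \ref{PropS3} and \ref{PropS4}, and the observations under Notation \ref{DefS1}; your extra care about joint Borel measurability via the Carath\'eodory argument is a reasonable elaboration of a point the paper leaves implicit. The only caveat is that your paraphrase of $\mathcal{D}^{max}(\partial_t-q(\cdot,D))$ omits two of its listed requirements, namely $\partial_t\phi(t,\cdot)\in\mathcal{S}(\mathbbm{R}^d)$ and the commutation $\partial_t\circ q(\cdot,D)\phi=q(\cdot,D)\circ\partial_t\phi$, but both are supplied by the definition of $\mathcal{C}^1([0,T],\mathcal{S}(\mathbbm{R}^d))$ and by Proposition \ref{PropS3}, which you already invoke.
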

\begin{proof}
We recall that, according to Notation \ref{DefS1}
$\mathcal{D}(\partial_t-q(\cdot,D))
=\mathcal{C}^1([0,T],\mathcal{S}(\mathbbm{R}^d))$.
The proof follows from Lemma \ref{LemmaS1}, Propositions \ref{PropS3} and \ref{PropS4}, and the comments under Notation \ref{DefS1}.
\end{proof}

\begin{corollary}\label{CoroJacob}
Let $q$ be a continuous negative definite symbol verifying the hypotheses of Theorem \ref{ThmJacob1}, let $(\mathbbm{P}^x)_{x\in\mathbbm{R}^d}$ be the corresponding homogeneous Markov class exhibited in Theorem \ref{ThmJacob1}, let $(\mathbbm{P}^{s,x})_{(s,x)\in[0,T]\times \mathbbm{R}^d}$ be the corresponding Markov class (see Notation \ref{HomogeneNonHomogene}), let $(\mathcal{D}(\partial_t-q(\cdot,D)),\partial_t-q(\cdot,D))$ be as in Notation \ref{DefS1}.
 Then 
 \begin{itemize}
 	\item $(\mathbbm{P}^{s,x})_{(s,x)\in[0,T]\times \mathbbm{R}^d}$ solves the well-posed Martingale Problem associated to $(\mathcal{D}(\partial_t-q(\cdot,D)),\partial_t-q(\cdot,D))$;
 	\item its transition function is measurable in time.
 \end{itemize}
\end{corollary}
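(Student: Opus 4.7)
The plan is to reduce everything to the general transfer result \ref{conclusionA4} of Appendix \ref{SC}, whose role (given its name and context) is precisely to promote a well-posed time-homogeneous martingale problem for an operator $L$ on a domain $\mathcal{D}(L)$ to a well-posed time-inhomogeneous martingale problem for $\partial_t + L$ on $\mathcal{D}^{max}(\partial_t + L)$, while yielding a canonical Markov class whose transition function is measurable in time. Nothing more than a careful verification of hypotheses should be required.

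First I would collect the inputs. From Theorem \ref{ThmJacob1} the homogeneous martingale problem associated to $(\mathcal{S}(\mathbb{R}^d), -q(\cdot,D))$ is well-posed and its solution $(\mathbb{P}^x)_{x \in \mathbb{R}^d}$ forms a homogeneous Markov class. By Corollary \ref{CoroS1} the candidate domain $\mathcal{D}(\partial_t - q(\cdot,D)) = \mathcal{C}^1([0,T],\mathcal{S}(\mathbb{R}^d))$ is a linear algebra contained in $\mathcal{D}^{max}(\partial_t - q(\cdot,D))$, meaning in particular that for every $\phi$ in this domain both $\phi$ and $(\partial_t - q(\cdot,D))\phi$ are bounded Borel functions on $[0,T]\times\mathbb{R}^d$ (this is exactly what Propositions \ref{PropS3} and \ref{PropS4} deliver). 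These are the structural requirements under which the appendix extension applies.

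With these ingredients I would then invoke Corollary \ref{conclusionA4}. Its conclusion is twofold: the time-inhomogeneous family $(\mathbb{P}^{s,x})_{(s,x)\in[0,T]\times\mathbb{R}^d}$ obtained from $(\mathbb{P}^x)$ through the restarting procedure of Notation \ref{HomogeneNonHomogene} solves the martingale problem associated to the triplet $(\mathcal{D}(\partial_t - q(\cdot,D)), \partial_t - q(\cdot,D), V_t\equiv t)$ in the sense of Definition \ref{MartingaleProblem}, and this problem is well-posed. At the appendix level, the martingale property for a test function $\phi \in \mathcal{C}^1([0,T],\mathcal{S}(\mathbb{R}^d))$ is obtained by combining the homogeneous martingale property for each frozen spatial slice $\phi(t,\cdot) \in \mathcal{S}(\mathbb{R}^d)$ with the interchange $\partial_t q(\cdot,D)\phi = q(\cdot,D)\partial_t\phi$ established in Proposition \ref{PropS3}; uniqueness is immediate because $\mathcal{S}(\mathbb{R}^d)$ already characterizes the homogeneous law.

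For the second bullet, measurability in time of the transition function is also part of the conclusion of Corollary \ref{conclusionA4}: the homogeneous transition function depends only on the time increment $t-s$ and is thus trivially Borel jointly in $(s,t)$, a property preserved by the shift construction producing $(\mathbb{P}^{s,x})$. I do not foresee a real obstacle in this corollary, since the non-trivial analytic work (Fourier estimates on $\mathcal{C}^1([0,T],\mathcal{S}(\mathbb{R}^d))$ yielding boundedness of $\phi$, $\partial_t\phi$, $q(\cdot,D)\phi$ and $q(\cdot,D)\partial_t\phi$) has already been carried out in Propositions \ref{PropS1}--\ref{PropS4} and packaged into Corollary \ref{CoroS1}; the present statement is essentially the bookkeeping step that wires Theorem \ref{ThmJacob1} to the extension theorem of the appendix.
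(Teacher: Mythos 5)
Your proposal is correct and follows essentially the same route as the paper: the first bullet is obtained by combining Theorem \ref{ThmJacob1}, Corollary \ref{CoroS1} and Corollary \ref{conclusionA4}, and the measurability in time is exactly the remaining part of the conclusion of Corollary \ref{conclusionA4} (which the paper traces back to Proposition \ref{HomoMeasurableint}). The only cosmetic difference is that you describe the joint Borel measurability of the homogeneous transition function as ``trivial,'' whereas it actually rests on Proposition \ref{measurableint}; since you defer to the appendix corollary for this, no gap results.
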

\begin{proof}
	The first statement directly comes from Theorem \ref{ThmJacob1} and Corollaries \ref{CoroS1} \ref{conclusionA4}, and the second from Proposition \ref{HomoMeasurableint}.
\end{proof}

\begin{remark}
	The symbol of the fractional Laplacian $q:(x,\xi)\mapsto\|\xi\|^{\alpha}$ trivially verifies the assumptions of Theorem \ref{ThmJacob1}. Indeed, it has no dependence in $x$, so it is enough to set $\psi:\xi\mapsto \|\xi\|^{\alpha}$, $c_0=c=c'=1$, $r_0=\alpha$ and $\gamma=\frac{1}{2}$. 
\end{remark}

The Pseudo-PDE that we focus on is the following.
\begin{equation}\label{PDEsymbol}
\left\{
\begin{array}{rcl}
\partial_tu-q(\cdot,D)u = f(\cdot,\cdot,u,\Gamma(u)^{\frac{1}{2}}) \,\text{on }[0,T]\times\mathbbm{R}^d\\
u(T,\cdot)=g,
\end{array}\right.
\end{equation}
where $q$ is a continuous negative definite symbol verifying the assumptions of Theorem \ref{ThmJacob1} and $\Gamma$ is the associated carr\'e du champs operator, see Definition \ref{SFO}.

\begin{remark} \label{R418}
	By Proposition 3.3 in \cite{di2012hitchhikers}, for any  $\alpha\in]0,2[$, there exists a constant $c_{\alpha}$ such that for any  $\phi\in\mathcal{S}(\mathbbm{R}^d)$, 
	\begin{equation}\label{FracLap}
	(-\Delta)^{\frac{\alpha}{2}}\phi  = c_{\alpha}PV\int_{\mathbbm{R}^d} \frac{(\phi(\cdot+y)-\phi)}{\| y\|^{d+\alpha}}dy,
	\end{equation}
	where  $PV$ is a notation for principal value, see (3.1) in \cite{di2012hitchhikers}. Therefore in the particular case of the fractional Laplace operator, the carr\'e du champs operator associated to  $(-\Delta)^{\frac{\alpha}{2}}$ is given by 
	\begin{equation}
	\begin{array}{rcl}
	&&\Gamma_{\alpha}(\phi)\\
	&=& c_{\alpha}PV\int_{\mathbbm{R}^d} \frac{(\phi^2(\cdot,\cdot+y)-\phi^2)}{\| y\|^{d+\alpha}}dy-2\phi c_{\alpha}PV\int_{\mathbbm{R}^d} \frac{(\phi(\cdot,\cdot+y)-\phi)}{\| y\|^{d+\alpha}}dy\\
	&=&c_{\alpha}PV\int_{\mathbbm{R}^d} \frac{(\phi(\cdot,\cdot+y)-\phi)^2}{\| y\|^{d+\alpha}}dy.
	\end{array}
	\end{equation}
\end{remark}

\begin{proposition}
	Let  $q$ be a continuous negative symbol verifying the assumptions of Theorem \ref{ThmJacob1}, let $(\mathbbm{P}^{s,x})_{(s,x)\in[0,T]\times \mathbbm{R}^d}$ be the Markov class which by Corollary \ref{CoroJacob} solves the well-posed Martingale Problem associated to
	\\
	 $(\mathcal{D}(\partial_t-q(\cdot,D)),\partial_t-q(\cdot,D))$. 
	
	For any $(f,g)$ verifying  \ref{Hpq}, $Pseudo-PDE(f,g)$ admits a unique decoupled mild solution in the sense of Definition \ref{mildsoluv}.
	
\end{proposition}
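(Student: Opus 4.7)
The plan is to reduce the statement to a direct invocation of Theorem \ref{MainTheorem}. That theorem requires three inputs: a canonical Markov class measurable in time which solves a well-posed martingale problem for some triplet $(\mathcal{D}(a),a,V)$, a pair $(\zeta,\eta)$ for which $H^{mom}(\zeta,\eta)$ holds, and a driver/terminal pair $(f,g)$ satisfying $H^{lip}(\zeta,\eta)$.

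First, I would dispose of the Markov/martingale-problem input by quoting Corollary \ref{CoroJacob}, which asserts exactly that the family $(\mathbbm{P}^{s,x})_{(s,x)\in[0,T]\times\mathbbm{R}^d}$ associated with $q$ is a Markov class with transition function measurable in time, and that it solves the well-posed martingale problem associated to the triplet $(\mathcal{D}(\partial_t-q(\cdot,D)),\partial_t-q(\cdot,D),V_t\equiv t)$ in the sense of Definition \ref{MartingaleProblem}. Thus Hypothesis \ref{MPwellposed} is in force.

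Next, I would handle the moment/growth conditions. Since $(f,g)$ is assumed to satisfy $H^{lip}_b$, i.e.\ $g$ and $f(\cdot,\cdot,0,0)$ are bounded, one may choose $\zeta\equiv 1$ and $\eta\equiv 1$ (or any bounded positive functions). With this choice $H^{mom}(\zeta,\eta)$ (Hypothesis \ref{HypMom}) is trivially verified because $\mathbbm{E}^{s,x}[\zeta^2(X_T)]$ and $\mathbbm{E}^{s,x}[\int_0^T\eta^2(X_r)dV_r]$ reduce to $1$ and $T$ respectively, while $H^{lip}_b$ combined with the Lipschitz property in $(y,z)$ immediately implies $H^{lip}(\zeta,\eta)$ (cf.\ Hypothesis \ref{Hpq}), as the growth bounds on $g$ and $f(\cdot,\cdot,0,0)$ are absorbed into the constants $C,C'$. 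This step is really the remark stated right after Hypothesis \ref{Hpq}: under $H^{lip}_b$ no genuine moment control on $X$ is needed.

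With these two ingredients in place, Theorem \ref{MainTheorem} applies verbatim and delivers the existence and uniqueness of a decoupled mild solution of $Pseudo\text{-}PDE(f,g)$ in the sense of Definition \ref{mildsoluv}. There is essentially no obstacle: the only non-trivial step was proving Corollary \ref{CoroJacob} (so that the Fourier-analytic domain $\mathcal{C}^1([0,T],\mathcal{S}(\mathbbm{R}^d))$ fits the abstract framework), which was already carried out in the preceding subsection; once this is granted, the present proposition is a one-line corollary of Theorem \ref{MainTheorem}.
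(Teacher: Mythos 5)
Your proof is correct and follows exactly the paper's route: the paper's own proof is the one-line statement that the assertion comes from Corollary \ref{CoroJacob} and Theorem \ref{MainTheorem}. Your additional explanation of why $H^{lip}_b$ lets one take $\zeta\equiv\eta\equiv 1$ so that $H^{mom}(\zeta,\eta)$ and $H^{lip}(\zeta,\eta)$ hold is exactly the content the paper leaves implicit in the remark following Hypothesis \ref{Hpq}.
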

\begin{proof}
	The assertion comes from Corollary \ref{CoroJacob} and Theorem \ref{MainTheorem}.
\end{proof}

\subsection{Parabolic semi-linear PDEs with distributional drift}\label{S4c}

In this section we will use the formalism and results obtained 
 in \cite{frw1} and \cite{frw2}, see also \cite{russo_trutnau07}, \cite{diel} 
for more recent developments. In particular the latter
paper treats interesting applications to polymers.
Those papers introduced a suitable framework of 
 Martingale Problem related to a PDE operator 
containing a distributional drift $b'$ which is the
derivative of a continuous function. 
\cite{issoglio} established a first work in the $n$-dimensional setting.

Let $b,\sigma\in \mathcal{C}^0(\mathbbm{R})$ such that $\sigma>0$.
 By  mollifier, we intend a function $\Phi\in\mathcal{S}(\mathbbm{R})$ with $\int \Phi(x)dx=1$. 
We denote $\Phi_n(x)=n\Phi(nx)$, $\sigma^2_n = \sigma^2\ast \Phi_n$, $b_n=b\ast \Phi_n$.
We then define $L_ng=\frac{\sigma^2_n}{2}g''+b_n'g'$.
$f\in\mathcal{C}^1(\mathbbm{R})$ is said to be a solution to $Lf =\dot{l}$ where $\dot{l}\in\mathcal{C}^0$, if for any mollifier $\Phi$, there are sequences $(f_n)$ in $\mathcal{C}^2$, $(\dot{l}_n)$ in $\mathcal{C}^0$ such that $L_nf_n=(\dot{l}_n)\text{,  }f_n\overset{\mathcal{C}^1}{\longrightarrow}f\text{,  }\dot{l}_n\overset{\mathcal{C}^0}{\longrightarrow}\dot{l}$.
We will assume that 
$\Sigma(x) =\underset{n\rightarrow\infty}{\text{lim }}2\int_0^x\frac{b'_n}{\sigma^2_n}(y)dy$
exists in $\mathcal{C}^0$ independently from the mollifier.

By Proposition 2.3 in \cite{frw1} there exists a solution $h\in\mathcal{C}^1$ to $Lh=0\text{,  }h(0)=0\text{,  }h'(0)=1$.
 Moreover it verifies $h'=e^{-\Sigma}$. Moreover by Remark 2.4 in 
 \cite{frw1}, 
for any $\dot{l}\in\mathcal{C}^0$, $x_0,x_1\in\mathbbm{R}$,   
there exists a unique solution of
\begin{equation}
Lf(x)=\dot{l}\text{,  }f\in\mathcal{C}^1\text{,  }f(0)=x_0\text{,  }f'(0)=x_1.
\end{equation}
$\mathcal{D}_L$ is defined as the set of $f\in\mathcal{C}^1$ such that there exists some $\dot{l}\in\mathcal{C}^0$ with $Lf=\dot{l}$. And by Lemma 2.9 in \cite{frw1} it is equal to the set of $f\in\mathcal{C}^1$ such that  $\frac{f'}{h'}\in\mathcal{C}^1$. So it is clearly an algebra.
$h$ is strictly increasing,  $I$ will denote its image. Let $L^0$ be the classical differential operator defined by $L^0\phi=\frac{\sigma^2_0}{2}\phi''$,
where
\begin{equation}
\sigma_0(y)=\left\{
\begin{array}{rcl}
(\sigma h')(h^{-1}(y)) & : & y\in I\\
0 & : & y\in I^c.
\end{array}\right.
\end{equation}
\\
Let $v$ be the unique solution to $Lv=1\text{,  }v(0)=v'(0)=0$, we will assume that
\begin{equation}\label{NonExplosion}
v(-\infty)=v(+\infty)=+\infty,
\end{equation}
which represents a non-explosion condition.
In this case, Proposition 3.13 in \cite{frw1} states that the Martingale
 Problem associated to 
$(\mathcal{D}_L,L)$ is well-posed. Its solution will be denoted $(\mathbbm{P}^{s,x})_{(s,x)\in[0,T]\times\mathbbm{R}^d}$.
By Proposition 2.13, $\mathcal{D}_{L^0} = C^2(I)$.
 and by Proposition 3.2 in \cite{frw1}, the Martingale Problem associated to
 $(\mathcal{D}_{L^0},L^0)$ 
is also well-posed, we will call $(\mathbbm{Q}^{s,x})_{(s,x)\in[0,T]\times\mathbbm{R}^d}$ its solution. Moreover under any $\mathbbm{P}^{s,x}$ the canonical process is a Dirichlet process, and $h^{-1}(X)$ is a semi-martingale that we call $Y$ 
 solving the SDE $Y_t=h(x)+\int_s^t\sigma_0(Y_s)dW_s$ in law, where the law of $Y$ is  $\mathbbm{Q}^{s,x}$. $X_t$ is a $\mathbbm{P}^{s,x}$-Dirichlet process
 whose martingale component is $\int_s^{\cdot}\sigma(X_r)dW_r$. 
$(\mathbbm{P}^{s,x})_{(s,x)\in[0,T]\times\mathbbm{R}^d}$ and $(\mathbbm{Q}^{s,x})_{(s,x)\in[0,T]\times\mathbbm{R}^d}$ both define Markov classes.

We introduce now the domain that we will indeed use.
\begin{definition}\label{domain}
We set 
\begin{equation}
\mathcal{D}(a)=\left\{
\phi\in\mathcal{C}^{1,1}([0,T]\times\mathbbm{R}):\frac{\partial_x\phi}{h'}\in\mathcal{C}^{1,1}([0,T]\times\mathbbm{R})\right\},
\end{equation}
which clearly is a linear algebra.
On $\mathcal{D}(a)$, we set $L\phi:=\frac{\sigma^2h'}{2}\partial_x(\frac{\partial_x\phi}{h'})$ and $a(\phi):=\partial_t\phi+L\phi$.
\end{definition}

\begin{proposition}
	Let $\Gamma$ denote the carré du champ operator associated to $a$, let $\phi,\psi$ be in $\mathcal{D}(a)$, then $\Gamma(\phi,\psi)=\sigma^2\partial_x\phi\partial_x\psi$.
\end{proposition}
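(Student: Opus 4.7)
The plan is to prove the identity by a direct computation from the definition $\Gamma(\phi,\psi)=a(\phi\psi)-\phi a(\psi)-\psi a(\phi)$. I first check that $\phi\psi$ actually lies in $\mathcal{D}(a)$ so the left-hand side is meaningful: if $\phi,\psi\in\mathcal{D}(a)$ then both are in $\mathcal{C}^{1,1}([0,T]\times\mathbbm{R})$ (a pointwise algebra) and, by the Leibniz rule,
\begin{equation*}
\frac{\partial_x(\phi\psi)}{h'}=\psi\cdot\frac{\partial_x\phi}{h'}+\phi\cdot\frac{\partial_x\psi}{h'}\in\mathcal{C}^{1,1}([0,T]\times\mathbbm{R}),
\end{equation*}
so $\phi\psi\in\mathcal{D}(a)$, as noted in Definition \ref{domain}.

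Next, since $a=\partial_t+L$, I split
\begin{equation*}
\Gamma(\phi,\psi)=\bigl[\partial_t(\phi\psi)-\phi\partial_t\psi-\psi\partial_t\phi\bigr]+\bigl[L(\phi\psi)-\phi L\psi-\psi L\phi\bigr].
\end{equation*}
The first bracket vanishes by the usual product rule for $\partial_t$. For the spatial part, I use the form of $L$ given in Definition \ref{domain} and compute, using only that $\partial_x\phi/h'$ and $\partial_x\psi/h'$ are $\mathcal{C}^{1,1}$ (so no separate derivative of $h'$ is needed):
\begin{align*}
L(\phi\psi)&=\frac{\sigma^2 h'}{2}\,\partial_x\!\left(\frac{\partial_x(\phi\psi)}{h'}\right)=\frac{\sigma^2 h'}{2}\,\partial_x\!\left(\psi\,\frac{\partial_x\phi}{h'}+\phi\,\frac{\partial_x\psi}{h'}\right)\\
&=\frac{\sigma^2 h'}{2}\left(\partial_x\psi\,\frac{\partial_x\phi}{h'}+\psi\,\partial_x\!\Bigl(\frac{\partial_x\phi}{h'}\Bigr)+\partial_x\phi\,\frac{\partial_x\psi}{h'}+\phi\,\partial_x\!\Bigl(\frac{\partial_x\psi}{h'}\Bigr)\right)\\
&=\sigma^2\,\partial_x\phi\,\partial_x\psi+\psi\,L\phi+\phi\,L\psi.
\end{align*}
Rearranging gives $L(\phi\psi)-\phi L\psi-\psi L\phi=\sigma^2\,\partial_x\phi\,\partial_x\psi$, and combining with the vanishing of the $\partial_t$ bracket yields the claim.

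The only delicate point is that the coefficient function $h'=e^{-\Sigma}$ is in general merely continuous (as $\Sigma$ is only $\mathcal{C}^0$), so one cannot naively expand $L$ as $\frac{\sigma^2}{2}\partial_x^2-\frac{\sigma^2 h''}{2h'}\partial_x$. The computation above avoids this obstacle by keeping $\partial_x(\partial_x\phi/h')$ and $\partial_x(\partial_x\psi/h')$ together as continuous objects, so the Leibniz rule on $\partial_x(\phi\psi)/h'$ is the only ingredient needed.
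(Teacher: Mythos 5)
Your proof is correct and follows essentially the same route as the paper: expand $\Gamma(\phi,\psi)=a(\phi\psi)-\phi a(\psi)-\psi a(\phi)$, note the $\partial_t$ terms cancel by the product rule, and apply the Leibniz rule to $\partial_x(\phi\psi)/h'$ to extract $\sigma^2\partial_x\phi\,\partial_x\psi$. Your additional checks (that $\phi\psi\in\mathcal{D}(a)$, and that one must avoid differentiating $h'$ separately) are sound and merely make explicit what the paper's terser computation leaves implicit.
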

\begin{proof}
	We fix $\phi,\psi$ in $\mathcal{D}(a)$. We write
	\begin{equation}
		\begin{array}{rcl}
		\Gamma(\phi,\psi)&=& (\partial_t+L)(\phi\psi)-\phi(\partial_t+L)(\psi)-\psi(\partial_t+L)(\phi)\\
		&=& \frac{\sigma^2h'}{2}\left(\partial_x\left(\frac{\partial_x\phi\psi}{h'}\right)-\phi\partial_x\left(\frac{\partial_x\psi}{h'}\right)-\psi\partial_x\left(\frac{\partial_x\phi}{h'}\right)\right)\\
		&=& \sigma^2\partial_x\phi\partial_x\psi.
		\end{array}
	\end{equation}
\end{proof}
Emphasizing  that $b'$ is a distribution,
the equation that we will  study in this section is therefore given by
\begin{equation}\label{PDEdistri}
\left\{
\begin{array}{l}
 \partial_tu + \frac{1}{2}\sigma^2\partial^2_x u + b'\partial_xu +f(\cdot,\cdot,u,\sigma|\partial_xu|)=0\quad\text{ on }[0,T]\times\mathbbm{R}\\
 u(T,\cdot) = g.
\end{array}\right.
\end{equation}

\begin{proposition}\label{MPnewdomaindistri}
$(\mathbbm{P}^{s,x})_{(s,x)\in[0,T]\times\mathbbm{R}^d}$  solves the Martingale Problem associated to $(a,\mathcal{D}(a))$.
\end{proposition}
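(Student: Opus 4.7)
My plan is to verify the two conditions of Definition \ref{MartingaleProblem} directly. Condition (a) is immediate from the construction of $(\mathbbm{P}^{s,x})_{(s,x)\in[0,T]\times\mathbbm{R}}$ as a canonical Markov class. The real task is condition (b): for every $\phi\in\mathcal{D}(a)$ and every $(s,x)\in[0,T]\times\mathbbm{R}$, show that $\phi(\cdot,X_{\cdot})-\phi(s,x)-\int_s^{\cdot}a(\phi)(r,X_r)dr$ is a local martingale on $[s,T]$ under $\mathbbm{P}^{s,x}$. Since $X$ is only a Dirichlet process (not a semimartingale), I cannot apply classical It\^o calculus directly to $\phi(t,X_t)$; the strategy is to pass through the scale transform $Y_t := h(X_t)$, which, as recalled in the excerpt, is a genuine semimartingale solving $Y_t = h(x)+\int_s^t\sigma_0(Y_r)dW_r$, and to apply classical It\^o there.

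Concretely, for $\phi\in\mathcal{D}(a)$ I would define $\tilde{\phi}(t,y):=\phi(t,h^{-1}(y))$ on $[0,T]\times I$ and verify that $\tilde{\phi}\in\mathcal{C}^{1,2}([0,T]\times I)$. The defining conditions $\phi\in\mathcal{C}^{1,1}$ and $\partial_x\phi/h'\in\mathcal{C}^{1,1}$ yield, via the chain rule and $(h^{-1})'=1/h'\circ h^{-1}$, the identities $\partial_t\tilde{\phi}(t,y)=\partial_t\phi(t,h^{-1}(y))$, $\partial_y\tilde{\phi}(t,y)=(\partial_x\phi/h')(t,h^{-1}(y))$, and $\partial_y^2\tilde{\phi}(t,y)=\partial_x(\partial_x\phi/h')(t,h^{-1}(y))/h'(h^{-1}(y))$, all of which are continuous. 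Hence the classical (time-dependent) It\^o formula applies to $\tilde{\phi}(\cdot,Y_\cdot)$ on $[s,T]$ and gives
\begin{equation}
\tilde{\phi}(t,Y_t)=\tilde{\phi}(s,Y_s)+\int_s^t\!\!\Big(\partial_t\tilde{\phi}+\tfrac{1}{2}\sigma_0^2\,\partial_y^2\tilde{\phi}\Big)(r,Y_r)\,dr+\int_s^t\partial_y\tilde{\phi}(r,Y_r)\sigma_0(Y_r)\,dW_r.\nonumber
\end{equation}

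To conclude I translate back to $X$ using $\tilde{\phi}(t,Y_t)=\phi(t,X_t)$ and the identity $\sigma_0(Y_t)=(\sigma h')(X_t)$ coming from the definition $\sigma_0(y)=(\sigma h')\circ h^{-1}(y)$ on $I$. The drift integrand becomes
\begin{equation}
\partial_t\phi(r,X_r)+\tfrac{1}{2}\sigma^2(X_r)h'(X_r)\,\partial_x\!\Big(\tfrac{\partial_x\phi}{h'}\Big)(r,X_r)=\partial_t\phi(r,X_r)+L\phi(r,X_r)=a(\phi)(r,X_r),\nonumber
\end{equation}
while the stochastic integral rewrites as $\int_s^t\partial_x\phi(r,X_r)\sigma(X_r)dW_r$, a genuine local martingale. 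This is precisely what condition (b) requires.

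The delicate point I expect is not the It\^o computation itself (which is essentially a routine chain-rule exercise once $\tilde{\phi}$ is shown to be $\mathcal{C}^{1,2}$) but rather the rigorous justification that $Y=h(X)$ satisfies the SDE $dY=\sigma_0(Y)dW$ as an equality of $\mathbbm{P}^{s,x}$-semimartingales on the canonical space, not merely as an equality in law under $\mathbbm{Q}^{s,x}$. This however follows from the Dirichlet-process decomposition of $X$ recalled in the excerpt (whose martingale part is $\int\sigma(X_r)dW_r$) together with the defining property $Lh=0$, which forces the finite-variation, zero-quadratic-variation component of $h(X)$ to vanish; this is the very reason $h$ was introduced as a scale function in \cite{frw1}.
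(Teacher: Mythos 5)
Your proposal is correct and follows essentially the same route as the paper's proof: transfer $\phi$ through the scale function via $\tilde\phi(t,y)=\phi(t,h^{-1}(y))$, check that $\tilde\phi$ is $\mathcal{C}^{1,2}$ with $\partial_y\tilde\phi=(\partial_x\phi/h')\circ h^{-1}$ and $\partial_y^2\tilde\phi=(2L\phi/\sigma_0^2)\circ h^{-1}$, apply the classical It\^o formula to $\tilde\phi(\cdot,Y_\cdot)$ with $dY=\sigma_0(Y)dW$, and translate back to obtain $\phi(t,X_t)-\phi(s,x)-\int_s^t a(\phi)(r,X_r)dr=\int_s^t\sigma(X_r)\partial_x\phi(r,X_r)dW_r$. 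Your additional remark on why $Y=h(X)$ is a genuine $\mathbbm{P}^{s,x}$-semimartingale solving the SDE pathwise (and not merely in law) makes explicit a point the paper takes for granted from \cite{frw1}, but it is a refinement of the same argument rather than a different one.
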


\begin{proof}
 $(t,y)\mapsto   \phi(t,h^{-1}(y))$ is of class $\mathcal{C}^{1,2}$; moreover
$\partial_x\left(\phi(r,\cdot)\circ h^{-1}\right)=\frac{\partial_x\phi}{h'}\circ h^{-1}$ and 
$\partial_x^2\left(\phi(r,\cdot)\circ h^{-1}\right)=\frac{2L\phi}{\sigma^2h'^2}\circ h^{-1}=\frac{2L\phi}{\sigma_0^2}\circ h^{-1}$. 
 By It\^o formula we have
\begin{equation}
    \begin{array}{rcl}
     \phi(t,X_t)&=&\phi(t,h^{-1}(Y_t))\\
     &=&\phi(s,x)+\int_s^t\left(\partial_t\phi(r,h^{-1}(Y_r))+\frac{1}{2}\sigma_0^2(Y_r)\partial_x^2\left(\phi(r,\cdot)\circ h^{-1}\right)(Y_r)\right)dr\\
      &&+ \int_s^t\sigma_0(r,h^{-1}(Y_r))\partial_x\left(\phi(r,\cdot)\circ h^{-1}\right)(Y_r)dW_r\\
     &=&\phi(s,x)+\int_s^t\left(\partial_t\phi(r,h^{-1}(Y_r))+L\phi(r,h^{-1}(Y_r))\right)dr\\
     &&+\int_s^t\sigma_0(r,h^{-1}(Y_r))\frac{\partial_x\phi(r,h^{-1}(Y_r))}{h'(Y_r)}dW_r\\
    &=&\phi(s,x)+\int_s^t\left(\partial_t\phi(r,X_r)+l(r,X_r))\right)dr+\int_s^t\sigma(r,X_r)\partial_x\phi(r,X_r)dW_r.
    \end{array}
\end{equation}
 
Therefore $\phi(t,X_t)-\phi(s,x)-\int_s^ta(\phi)(r,X_r)dr=\int_s^t\sigma(r,X_r)\partial_x\phi(r,X_r)dW_r$ is a  local martingale.
\end{proof}

In order to consider the $FBSDE^{s,x}(f,g)$ for functions $(f,g)$  having polynomial growth in $x$ we will show the following result.
We formulate here the supplementary assumption, called (TA) in \cite{frw1}.
This means the  existence of strictly positive constants  $c_1, C_1$ such that
\begin{equation} \label{(TA)}
c_1 \le \frac{e^\Sigma}{\sigma} \le C_1.
\end{equation}
\begin{proposition}\label{MomentsDistri}
We suppose that \eqref{(TA)} is fulfilled and $\sigma$ has linear growth.
Then, for any  $p > 0$ and   $(s,x)\in[0,T]\times\mathbbm{R}$, $\mathbbm{E}^{s,x}[|X_T|^p]<\infty$ and 
$\mathbbm{E}^{s,x}[\int_s^T|X_r|^p dr]<\infty$. 

\end{proposition}

\begin{proof}

We start by proving the proposition in the divergence form case,
 meaning that $b=\frac{\sigma^2}{2}$.
Let $(s,x)$ and $t\in[s,T]$ be fixed. Thanks to the Aronson estimates,
see e.g. \cite{ar} and also  Section 5. of \cite{frw1}, there is a 
constant $M > 0$ such that
\begin{equation}
\begin{array}{rcl}
 \mathbbm{E}^{s,x}[|X_t|^p] &=&\int_{\mathbbm{R}}|y|^pp_{t-s}(x,y)dy\\
 &\leq& \frac{M}{\sqrt{t-s}}\int_{\mathbbm{R}}|y|^pe^{-\frac{|x-y|^2}{M(t-s)}}dz\\
 &=&M^{\frac{3}{2}}\int_{\mathbbm{R}}|x+z\sqrt{M(t-s)}|^pe^{-z^2}dz\\
 &\leq&\sum_{k=0}^p M^{\frac{3+k}{2}} \binom{p}{k}
 |x|^k|t-s|^{\frac{p-k}{2}}\int_{\mathbbm{R}}|z|^{p-k}e^{-z^2}dz,
\end{array}
\end{equation}
which 
 (for fixed $(s,x)$) is   bounded in $t \in [s,T] $ 
and therefore Lebesgue integrable in $t$ on $[s,T]$. 
 This in particular shows that $\mathbbm{E}^{s,x}[|X_T|^p]$ and 
$ \mathbbm{E}^{s,x}[\int_s^T|X_r|^pdr] (=
\int_s^T\mathbbm{E}^{s,x}[|X_r|^p]dr)$ are finite.

Now we will consider the case in which $X$  only verifies \eqref{(TA)}  and we will add the hypothesis that $\sigma$ has linear growth.
Then there exists a process $Z$ (see Lemma 5.6 in \cite{frw1}) solving an SDE with distributional drift of divergence form generator, and a
 function $k$ of class  $\mathcal{C}^1$ such that $X=k^{-1}(Z)$. The
 \eqref{(TA)} condition implies that there exist two constants such that $0<c\leq k'\sigma \leq C$ implying that for any $x$,
$(k^{-1})'(x)=\frac{1}{k'\circ k^{-1}(x)}\leq \frac{\sigma\circ k^{-1}(x)}{c}\leq C_2(1+ |k^{-1}(x)|)$,
for a positive constant $C_2$. So by Gronwall Lemma there exists $C_3>0$ such
 that $k^{-1}(x)\leq C_3e^{C_2|x|},  \ \forall x \in {\mathbb R}$.
Now thank to the Aronson estimates  on  the transition function 
$p^Z$
of $Z$, for every  $p > 0$, we have
\begin{equation}
\begin{array}{rcl}
\mathbbm{E}^{s,x}[|X|_t^p] &\leq& C_3\int e^{C_2p|z|}p^Z_{t-s}(k(x),z)dz\\
&\leq& \int e^{C_2p|z|}\frac{M}{\sqrt{t}}e^{-\frac{|k(x)-z|^2}{Mt}}dz\\
&\leq& M^{\frac{3}{2}}\int e^{C_2p(\sqrt{Mt}|y|+k(x))}e^{-y^2}dy\\
&\leq& Ae^{B k(x)} ,
\end{array}
\end{equation}
where $A,B$ are two constants depending on $p$ and $M$.
This implies that \\
$\mathbbm{E}^{s,x}[|X_T|^p]<\infty$ and 
$\mathbbm{E}^{s,x}[\int_s^T|X_r|^pdr]<\infty$.
\end{proof}

We can now state the  main result of this section. 
\begin{proposition}
Assume that the non-explosion condition \eqref{NonExplosion} is verified, that $f$ is Lipschitz in $(y,z)$ uniformly in $(t,x)$ 
and the validity  of one of the two following items.
\begin{itemize}
	\item  the (TA) condition \eqref{(TA)} is fulfilled, $\sigma$ has linear growth and $g$ has polynomial growth and $f$ has polynomial growth in $x$ uniformly in $t$;
	\item $f(\cdot,\cdot,0,0)$ and $g$ are bounded.
\end{itemize} 
 Then \eqref{PDEdistri} has a unique decoupled mild solution $u$ in the sense of Definition \ref{mildsoluv}.
\end{proposition}
\begin{proof}
The assertion comes from Theorem \ref{MainTheorem} which applies thanks to Propositions \ref{MPnewdomaindistri}, \ref{MomentsDistri} and  \ref{HomoMeasurableint}. 
\end{proof}
\begin{remark} 
\begin{enumerate}
\item A first analysis linking 
PDEs (in fact second order elliptic differential equations) with distributional drift and BSDEs
was performed by \cite{wurzer}.
In those BSDEs the final horizon was a stopping time. 
\item In \cite{issoglio_jing16}, the authors  have considered a class of BSDEs
involving particular distributions.
\end{enumerate}
\end{remark}

\subsection{Diffusion equations on differential manifolds}\label{S4d}

In this section, we will provide an example of application in a non
 Euclidean space. 
We consider a compact connected smooth differential manifold $M$ of dimension $n$. We denote by $\mathcal{C}^{\infty}(M)$ the linear algebra of smooth functions from $M$ to $\mathbbm{R}$, and $(U_i,\phi_i)_{i\in I}$ its atlas. The reader may consult \cite{jost} 
for an extensive introduction to the study of differential manifolds, and \cite{hsu} concerning diffusions on differential manifolds.

\begin{lemma} \label{LPolish}
  	$M$ is Polish.
\end{lemma}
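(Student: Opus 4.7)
The plan is to verify directly the two defining properties of a Polish space, namely complete metrizability and separability, using only standard facts about smooth manifolds together with compactness.

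First, I would recall that by the usual convention a smooth manifold $M$ is Hausdorff and second countable, so its topology admits a countable base. Next, I would produce a metric on $M$ that induces its topology. Two routes are available: either apply the Urysohn metrization theorem (a second countable regular Hausdorff space is metrizable, and a locally Euclidean Hausdorff space is regular), or, more constructively, endow $M$ with a Riemannian metric $g$ obtained by patching positive definite symmetric tensors on the charts $(U_i,\phi_i)_{i\in I}$ via a smooth partition of unity subordinate to the open cover $(U_i)_{i\in I}$. The induced geodesic distance $d_g$ then defines a metric compatible with the manifold topology.

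Once $M$ is metrized by some $d$, compactness does the rest. Every compact metric space is complete, because any Cauchy sequence admits a convergent subsequence (by compactness) and a Cauchy sequence with a convergent subsequence converges to the same limit. Moreover, every compact metric space is separable: for each $n \in \mathbbm{N}^*$ one can extract from the open cover by balls of radius $1/n$ a finite subcover, and taking the union over $n$ of the centers of these finite subcovers yields a countable dense subset.

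Combining these two properties, $M$ is a separable completely metrizable topological space, hence Polish. The only potentially delicate point is justifying the existence of a compatible metric; I would favor the Riemannian construction, which is explicit and well-suited to the manifold framework invoked in the rest of the section, but the Urysohn argument is a perfectly valid shortcut that avoids any reference to differential structure beyond the fact that $M$ is a second countable locally compact Hausdorff space.
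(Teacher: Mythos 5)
Your proof is correct and follows essentially the same route as the paper: equip $M$ with a Riemannian metric (the paper cites Theorem 1.4.1 of Jost for its existence), take the induced distance, and conclude separability and completeness from compactness of the metric space. The extra detail you supply (the partition-of-unity construction, the Urysohn alternative, and the standard arguments for completeness and separability of compact metric spaces) is all sound but not needed beyond what the paper records.
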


\begin{proof} \
By Theorem 1.4.1 in \cite{jost} $M$ may be equipped with a Riemannian metric,
that we denote by $m$ and its topology may be metricized by the associated distance which we denote by $d$. 
As any 
compact metric space, $(M,d)$ is separable and complete
 so that $M$ is  a Polish space.
\end{proof}
We denote by $(\Omega,\mathcal{F},(X_t)_{t\in[0,T]}(\mathcal{F})_{t\in[0,T]})$ the canonical space associated to $M$ and $T$, 
see Definition \ref{canonicalspace}.

\begin{definition}
An operator $L:\mathcal{C}^{\infty}(M)\longrightarrow\mathcal{C}^{\infty}(M)$ will be called a \textbf{smooth second order elliptic non degenerate differential operator on $M$} if for any chart 
$\phi:U\longrightarrow \mathbbm{R}^n$ there exist smooth $\mu:\phi(U)\longrightarrow \mathbbm{R}^n$  and $\alpha:\phi(U)\longrightarrow S^*_+(\mathbbm{R}^n)$  such that on $\phi(U)$ for any $f\in\mathcal{C}^{\infty}(M)$ we have
\begin{equation}
Lf(\phi^{-1}(x))=\frac{1}{2}\underset{i,j=1}{\overset{n}{\sum}}\alpha^{i,j}(x)\partial_{x_ix_j}(f\circ\phi^{-1})(x)+\underset{i=1}{\overset{n}{\sum}}\mu^i(x)\partial_{x_i}(f\circ\phi^{-1})(x).
\end{equation}
\end{definition}
$\alpha$ and $\mu$ depend on the chart $\phi$ but this dependence will be sometimes
 omitted and we will say that for some given local coordinates,
\\ $Lf=\frac{1}{2}\underset{i,j=1}{\overset{n}{\sum}}\alpha^{i,j}\partial_{x_ix_j}f+\underset{i=1}{\overset{n}{\sum}}\mu^i\partial_{x_i}f$.

The following definition comes from \cite{hsu}, see Definition 1.3.1.
\begin{definition}
 Let $L$ denote a smooth second order elliptic non degenerate differential operator on $M$.
 Let $x\in M$. A probability measure $\mathbbm{P}^x$ on $(\Omega,\mathcal{F})$ will be called an \textbf{$L$-diffusion starting in $x$} if
 \begin{itemize}
 	\item $\mathbbm{P}^x(X_0=x)=1$;
 	\item for every $f\in\mathcal{C}^{\infty}(M)$, $f(X)-\int_0^{\cdot}Lf(X_r)dr$ is a $(\mathbbm{P}^x,(\mathcal{F})_{t\in[0,T]}) )$ local martingale.
 \end{itemize}
\end{definition}

\begin{remark} 
No explosion can occur for continuous stochastic processes with values in 
a compact  space, so there is no need to consider paths in the compactification of $M$ as in Definition 1.1.4 in \cite{hsu}.
	
	Theorems 1.3.4 and 1.3.5 in \cite{hsu} state that for any $x\in M$, there exists a unique $L$-diffusion starting in $x$.  Theorem 1.3.7 in \cite{hsu} implies that those probability measures $(\mathbbm{P}^x)_{x\in M}$ define a homogeneous Markov class.
\end{remark}

For a given operator $L$, the carr\'e du champs operator $\Gamma$ is given (in local coordinates) by
$\Gamma(\phi,\psi)=\underset{i,j=1}{\overset{n}{\sum}}\alpha^{i,j}\partial_{x_i}\phi\partial_{x_j}\phi$,
see equation (1.3.3) in \cite{hsu}. We wish to emphasize here that the carr\'e du champs operator has recently become a powerful tool in the study of geometrical properties of Riemannian manifolds. The reader may refer e.g. to \cite{bakry}.

\begin{definition}
	$(\mathbbm{P}^x)_{x\in M}$ will be called the \textbf{$L$-diffusion}. 
	 If $M$ is equipped with a specific Riemannian metric $m$ 
and $L$ is chosen to be equal to $\frac{1}{2}\Delta_m$ where $\Delta_m$ the Laplace-Beltrami operator associated to $m$, then $(\mathbbm{P}^x)_{x\in M}$ will be called the \textbf{Brownian motion associated to $m$}, see \cite{hsu} Chapter 3 for details.
\end{definition}

We now fix some smooth second order elliptic non degenerate differential operator $L$ and the $L$-diffusion $(\mathbbm{P}^x)_{x\in M}$. We introduce the associated Markov class $(\mathbbm{P}^{s,x})_{(s,x)\in[0,T]\times M}$ as described in Notation \ref{HomogeneNonHomogene}, which by Proposition \ref{HomoMeasurableint} is measurable in time.

\begin{notation}
	We define $\mathcal{D}(\partial_t+L)$ the set of functions $u:[0,T]\times M\longrightarrow \mathbbm{R}$ such that, for any chart $\phi:U\longrightarrow \mathbbm{R}^n$, the mapping
	\begin{equation}
		\begin{array}{rcl}
		[0,T]\times \phi(U)&\longrightarrow& \mathbbm{R}\\
		(t,x) &\longmapsto& u(t,\phi^{-1}(x))
		\end{array}
	\end{equation}
	belongs to $\mathcal{C}^{\infty}([0,T]\times \phi(U),\mathbbm{R})$, the set of infinitely continuously differentiable functions in the usual Euclidean setup.
\end{notation}

\begin{lemma}\label{LemmaManifold}
	$\mathcal{D}(\partial_t+L)$ is a linear algebra included in $\mathcal{D}^{max}(\partial_t+L)$ as defined in Notation \ref{NotDomain}.
\end{lemma}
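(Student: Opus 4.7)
My plan is to treat the two assertions separately, leveraging the compactness of $M$ heavily for the second one.

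First, I would check the algebra property. Given $u_1,u_2\in\mathcal{D}(\partial_t+L)$ and a chart $\phi:U\rightarrow\mathbbm{R}^n$, the functions $(t,x)\mapsto u_i(t,\phi^{-1}(x))$ belong to $\mathcal{C}^{\infty}([0,T]\times\phi(U),\mathbbm{R})$, which is a standard linear algebra under pointwise operations. Since the mapping $u\mapsto u(\cdot,\phi^{-1}(\cdot))$ is linear and preserves pointwise products, the sums, scalar multiples, and products of $u_1,u_2$ read in the chart $\phi$ remain smooth. As this holds for every chart, $\mathcal{D}(\partial_t+L)$ is a linear algebra.

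Next I would verify the inclusion in $\mathcal{D}^{max}(\partial_t+L)$. The latter notion (Notation \ref{NotDomain} in the appendix) typically requires: bounded Borel regularity of $u$, existence and boundedness of $\partial_tu$ as a function on $[0,T]\times M$, the fact that for every fixed $t$ the section $u(t,\cdot)$ lies in $\mathcal{D}(L)=\mathcal{C}^{\infty}(M)$, and boundedness/measurability of $Lu$, plus (possibly) continuity in $t$ of these objects. Every item follows from compactness of $M$: given $u\in\mathcal{D}(\partial_t+L)$, its expression in each chart is $\mathcal{C}^{\infty}$, hence $u$, $\partial_tu$ and also $Lu$ (computed in any chart with the local formula $\frac{1}{2}\sum\alpha^{i,j}\partial_{x_ix_j}(u\circ\phi^{-1})+\sum\mu^i\partial_{x_i}(u\circ\phi^{-1})$) are continuous on $[0,T]\times M$; since $[0,T]\times M$ is compact, they are automatically bounded. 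Moreover for fixed $t$, $u(t,\cdot)$ is smooth in every chart, so it lies in $\mathcal{C}^{\infty}(M)=\mathcal{D}(L)$, and $Lu(t,\cdot)$ is itself smooth; conversely for fixed $x$, $u(\cdot,x)$ is smooth, so it is in particular $\mathcal{C}^1([0,T],\mathbbm{R})$.

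The only point requiring a small argument is the global well-definedness of $Lu(t,x)$ independent of the chart. This is however immediate: $L$ is given as an operator on $\mathcal{C}^{\infty}(M)$, and for each fixed $t$ one simply applies it to $u(t,\cdot)\in\mathcal{C}^{\infty}(M)$; the formula displayed in the definition only tells us how to compute it in coordinates. Joint regularity in $(t,x)$ of the resulting function $(t,x)\mapsto Lu(t,x)$ is then inherited from the joint smoothness of $u$ in any chart. The main (mild) obstacle is thus purely notational — checking that the chart-wise smoothness hypothesis in the definition of $\mathcal{D}(\partial_t+L)$ indeed feeds directly into every item of $\mathcal{D}^{max}(\partial_t+L)$ — but no genuine analytic difficulty arises because the compactness of $M$ removes all growth considerations.
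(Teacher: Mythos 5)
Your treatment of the algebra property and of most of the inclusion matches the paper's own proof: chart-wise smoothness of products gives the algebra structure, and continuity on the compact set $[0,T]\times M$ gives every boundedness requirement for free. However, you have misstated the content of Notation \ref{NotDomain}. Besides the regularity and boundedness items you list, $\mathcal{D}^{max}(\partial_t+L)$ explicitly requires the commutation relation $\partial_t\circ Lu=L\circ\partial_t u$, together with the boundedness of $\partial_t\circ Lu$ itself. Your proposal never addresses this condition, and it is precisely the one point where the paper's proof does actual work: it writes $Lu$ in a chart as $\frac{1}{2}\sum_{i,j}\alpha^{i,j}\partial_{x_ix_j}\bigl(u(\cdot,\phi^{-1}(\cdot))\bigr)+\sum_i\mu^i\partial_{x_i}\bigl(u(\cdot,\phi^{-1}(\cdot))\bigr)$ and invokes the Schwarz theorem on the jointly smooth function $(t,x)\mapsto u(t,\phi^{-1}(x))$ to commute $\partial_t$ with the spatial derivatives, chart by chart.

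The missing step is elementary and true, so your argument is repairable; but as written it does not verify all the defining conditions of $\mathcal{D}^{max}(\partial_t+L)$, and the hedge ``plus (possibly) continuity in $t$'' indicates you were guessing at the definition rather than checking it. Once the identity $\partial_t\circ Lu=L\circ\partial_t u$ is established in each chart, continuity of $\partial_t\circ Lu$ on the compact set $[0,T]\times M$ yields its boundedness exactly as for the other terms, and the inclusion follows.
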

\begin{proof}
	For some fixed chart  $\phi:U\longrightarrow \mathbbm{R}^n$, $\mathcal{C}^{\infty}([0,T]\times \phi(U),\mathbbm{R})$ is an algebra, so it is immediate that $\mathcal{D}(\partial_t+L)$ is an algebra.
	Moreover, if $u\in\mathcal{D}(\partial_t+L)$, it is clear that 
	\begin{itemize}
		\item $\forall x\in M$, $u(\cdot,x)\in\mathcal{C}^1([0,T],\mathbbm{R})$ and $\forall t\in[0,T]$, $u(t,\cdot)\in\mathcal{C}^{\infty}(M)$,
		\item $\forall t\in[0,T]$, $\partial_t u(t,\cdot)\in\mathcal{C}^{\infty}(M)$ and $\forall x\in M$, $Lu(\cdot,x)\in\mathcal{C}^1([0,T],\mathbbm{R})$.
	\end{itemize}
Given a chart $\phi:U\longrightarrow \mathbbm{R}^n$, by the Schwarz
 Theorem allowing the commutation of partial derivatives
 (in the classical Euclidean setup), we have for $x\in\phi(U)$
\begin{equation}
\begin{array}{rcl}
  \partial_t\circ L(u)(t,\phi^{-1}(x))&=&\frac{1}{2}\underset{i,j=1}{\overset{n}{\sum}}\alpha^{i,j}(x)\partial_t\partial_{x_ix_j}(u(\cdot,\phi^{-1}(\cdot))(t,x)\\
                                          &+&\underset{i=1}{\overset{n}{\sum}}\mu^i(x)\partial_t\partial_{x_i}(u(\cdot,\phi^{-1}(\cdot))(t,x)\\
                                      &=&\frac{1}{2}\underset{i,j=1}{\overset{n}{\sum}}\alpha^{i,j}(x)\partial_{x_ix_j}\partial_t(u(\cdot,\phi^{-1}(\cdot))(t,x)\\
  &+&\underset{i=1}{\overset{n}{\sum}}\mu^i(x)\partial_{x_i}\partial_t(u(\cdot,\phi^{-1}(\cdot))(t,x)\\
&=&L\circ\partial_t (u)(t,\phi^{-1}(x)).
\end{array}
\end{equation}
So $\partial_t\circ Lu=L\circ\partial_tu$. Finally $\partial_tu$, $Lu$ and $\partial_t\circ Lu$ are continuous (since they are continuous on all the sets $[0,T]\times U$ where $U$ is the domain of a chart) and are therefore bounded as continuous functions on the compact set $[0,T]\times M$.
This concludes the proof.
\end{proof}

\begin{corollary}
	$(\mathbbm{P}^{s,x})_{(s,x)\in[0,T]\times M}$ solves the well-posed Martingale Problem associated to $(\partial_t+L,\mathcal{D}(\partial_t+L))$ in the sense of Definition \ref{MartingaleProblem}.
\end{corollary}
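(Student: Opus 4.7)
\begin{prooff}{}
The strategy is simply to invoke the general extension result Corollary \ref{conclusionA4} (already used for the pseudo-differential case in Section \ref{S4b}), which upgrades a well-posed time-homogeneous martingale problem on a core domain to a well-posed time-inhomogeneous martingale problem on a larger time-dependent algebra contained in $\mathcal{D}^{\max}(\partial_t+L)$. All the hypotheses have essentially been assembled just above.

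First I would record that by Theorems 1.3.4, 1.3.5 and 1.3.7 of \cite{hsu}, the family $(\mathbbm{P}^x)_{x\in M}$ solves the well-posed homogeneous martingale problem associated with $(L,\mathcal{C}^{\infty}(M))$, and that $\mathcal{C}^{\infty}(M)$ is a linear algebra containing the constants and separating points of the Polish space $M$ (Lemma \ref{LPolish}). This gives us the starting ingredient: a time-homogeneous well-posed martingale problem on a dense enough algebra in the sense required by Appendix \ref{SC}.

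Next, by Lemma \ref{LemmaManifold}, $\mathcal{D}(\partial_t+L)$ is a linear algebra included in $\mathcal{D}^{\max}(\partial_t+L)$, and its ``time slices'' $u(t,\cdot)$ belong to $\mathcal{C}^{\infty}(M)=\mathcal{D}(L)$ while $\partial_t u$, $Lu$ and $\partial_t Lu = L\partial_t u$ are continuous bounded functions on the compact set $[0,T]\times M$. Together with the time-homogeneous well-posedness and the measurability in time of the transition function (Proposition \ref{HomoMeasurableint}), this is precisely the input needed by Corollary \ref{conclusionA4}, which then yields that $(\mathbbm{P}^{s,x})_{(s,x)\in[0,T]\times M}$ (constructed from $(\mathbbm{P}^x)_{x\in M}$ via Notation \ref{HomogeneNonHomogene}) solves the martingale problem associated to $(\partial_t+L,\mathcal{D}(\partial_t+L),V_t\equiv t)$ in the sense of Definition \ref{MartingaleProblem}.

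The only point that deserves a moment of care is well-posedness. Existence is automatic from Corollary \ref{conclusionA4}; for uniqueness, suppose $\widetilde{\mathbbm{P}}^{s,x}$ satisfies (a) and (b) of Definition \ref{MartingaleProblem} for this triplet. Applying condition (b) to time-independent test functions $\phi(t,x)=f(x)$ with $f\in\mathcal{C}^{\infty}(M)$ (which do lie in $\mathcal{D}(\partial_t+L)$ since $M$ is compact smooth) reduces $\widetilde{\mathbbm{P}}^{s,x}$ to a solution of the homogeneous martingale problem for $(L,\mathcal{C}^{\infty}(M))$ started from $x$ at time $s$; by the well-posedness recalled in the first step, this forces $\widetilde{\mathbbm{P}}^{s,x}=\mathbbm{P}^{s,x}$. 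I expect this last reduction step to be the only substantive point, since the rest is a direct citation of Corollary \ref{conclusionA4} combined with Lemma \ref{LemmaManifold}.
\end{prooff}
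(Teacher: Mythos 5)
Your proposal is correct and follows exactly the paper's route: the paper's proof is the one-line citation of Lemma \ref{LemmaManifold} together with Corollary \ref{conclusionA4}, which is precisely your argument. The extra paragraph on uniqueness via time-independent test functions is a reasonable unpacking of the well-posedness claim already built into Corollary \ref{conclusionA4}, but it is not a different method.
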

 
\begin{proof}
	The corollary derives from Lemma \ref{LemmaManifold} and Corollary \ref{conclusionA4}.
\end{proof}

We fix a couple of functions $(f,g)$ with $f$ Lipschitz in $(y,z)$ uniformly in $(t,x)$, and $g, f(\cdot,\cdot,0,0)$ bounded. 
We consider the PDE
\begin{equation}\label{PDEmanifold}
\left\{
\begin{array}{l}
 \partial_tu + Lu +f(\cdot,\cdot,u,(\alpha\nabla u\cdot\nabla u)^{\frac{1}{2}})=0\quad\text{ on }[0,T]\times M\\
 u(T,\cdot) = g.
\end{array}\right.
\end{equation}
Since  Theorem \ref{MainTheorem}  applies, we have the following result.
\begin{proposition}
Equation \eqref{PDEmanifold} admits a unique decoupled mild solution $u$ in the sense of Definition \ref{mildsoluv}. 
\end{proposition}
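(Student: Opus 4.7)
The plan is to verify the hypotheses of Theorem \ref{MainTheorem} and invoke it directly. Three ingredients are required: a well-posed Martingale Problem in the sense of Hypothesis \ref{MPwellposed}; time-measurability of the transition function; and, modulo the remark after Hypothesis \ref{Hpq}, a moment condition $H^{mom}(\zeta,\eta)$ compatible with the growth of $(f,g)$.

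The first ingredient is precisely the content of the corollary just established, namely that $(\mathbbm{P}^{s,x})_{(s,x)\in[0,T]\times M}$ solves the well-posed Martingale Problem associated to $(\mathcal{D}(\partial_t+L),\partial_t+L,V_t\equiv t)$. For the second, I would apply Proposition \ref{HomoMeasurableint}, since $(\mathbbm{P}^{s,x})_{(s,x)\in[0,T]\times M}$ is obtained from the time-homogeneous $L$-diffusion $(\mathbbm{P}^x)_{x\in M}$ via the construction of Notation \ref{HomogeneNonHomogene}. These two points are essentially bookkeeping, relying on what has already been set up in Section \ref{S4d}.

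The only point requiring attention is the moment/growth condition. Here I would invoke the compactness of $M$: continuous functions on $M$ are automatically bounded, and choosing $\zeta$ and $\eta$ to be any positive constants makes $H^{mom}(\zeta,\eta)$ trivial, since $\mathbbm{E}^{s,x}[1]=1$ and $V_T-V_s\le T$. Because $(f,g)$ satisfies $H^{lip}_b$, that is, $g$ and $f(\cdot,\cdot,0,0)$ are bounded, one may absorb these bounds into the constants $C,C'$ of Hypothesis \ref{Hpq} and thereby read $H^{lip}_b$ as $H^{lip}(\zeta,\eta)$ with constant $\zeta,\eta$. Theorem \ref{MainTheorem} then delivers existence and uniqueness of a decoupled mild solution of \eqref{PDEmanifold}. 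I do not foresee any substantive obstacle: compactness makes the integrability requirements trivially satisfied, and alternatively one could bypass $H^{mom}$ entirely by citing the remark following Hypothesis \ref{Hpq} that allows $H^{lip}_b$ to be used without any generalized moments on $X$.
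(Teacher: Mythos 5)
Your proposal is correct and matches the paper's argument: the paper simply states that Theorem \ref{MainTheorem} applies, relying on the corollary establishing the well-posed Martingale Problem for $(\partial_t+L,\mathcal{D}(\partial_t+L),V_t\equiv t)$, the time-measurability from Proposition \ref{HomoMeasurableint}, and the fact that $H^{lip}_b$ suffices (compactness of $M$ making all moment conditions trivial with constant $\zeta,\eta$). You have merely spelled out the bookkeeping the paper leaves implicit.
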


\begin{appendices}

\section{Markov classes}\label{A2}

In this Appendix we recall some basic definitions and results concerning Markov processes. For a complete study of homogeneous Markov processes, one may consult \cite{dellmeyerD}, concerning non-homogeneous Markov classes, our reference was chapter VI of \cite{dynkin1982markov}. The first definition refers to the  canonical space that one can find in \cite{jacod79}, see paragraph 12.63.
\begin{notation}\label{canonicalspace}
In the whole section  $E$ will be a fixed  Polish  space (a separable 
completely metrizable topological space). $E$ will be called the \textbf{state space}. 
\\
\\
We consider $T\in\mathbbm{R}^*_+$. We denote $\Omega:=\mathbbm{D}([0,T],E)$ 
the space of functions from $[0,T]$ to $E$  right-continuous  with left limits and continuous at time $T$, e.g. c\`adl`ag. For any $t\in[0,T]$ we denote the coordinate mapping $X_t:\omega\mapsto\omega(t)$, and we introduce on $\Omega$ the $\sigma$-field  $\mathcal{F}:=\sigma(X_r|r\in[0,T])$. 

On the measurable space $(\Omega,\mathcal{F})$, we introduce the measurable \textbf{canonical process}
\begin{equation*}
X:
\begin{array}{rcl}
(t,\omega)&\longmapsto& \omega(t)\\ \relax
([0,T]\times \Omega,\mathcal{B}([0,T])\otimes\mathcal{F}) &\longrightarrow & (E,\mathcal{B}(E)),
\end{array}
\end{equation*}
and the right-continuous filtration $(\mathcal{F}_t)_{t\in[0,T]}$ where
$\mathcal{F}_t:=\underset{s\in]t,T]}{\bigcap}\sigma(X_r|r\leq s),$ if $t<T$, and $\mathcal{F}_T:= \sigma(X_r|r\in[0,T])=\mathcal{F}$.
$\left(\Omega,\mathcal{F},(X_t)_{t\in[0,T]},(\mathcal{F}_t)_{t\in[0,T]}\right)$ will be called the \textbf{canonical space} (associated to $T$ and $E$).
For any $t \in [0,T]$ we denote $\mathcal{F}_{t,T}:=\sigma(X_r|r\geq t)$, and
for any $0\leq t\leq u<T$ we will denote
$\mathcal{F}_{t,u}:= \underset{n\geq 0}{\bigcap}\sigma(X_r|r\in[t,u+\frac{1}{n}])$.
\end{notation}
Since $E$ is Polish, we recall that $\mathbbm{D}([0,T],E)$ can be equipped with a Skorokhod distance which makes it a Polish metric space (see Theorem 5.6 in chapter 3 of \cite{EthierKurz}, and for which the Borel $\sigma$-field is $\mathcal{F}$, see Proposition 7.1 in Chapter 3 of \cite{EthierKurz}. This in particular implies that $\mathcal{F}$ is separable, as the Borel $\sigma$-field of a separable metric space.

\begin{remark}\label{RApp}
Previous definitions and all the notions of this Appendix,
 extend to a time interval equal to $\mathbbm{R}_+$ or replacing the Skorokhod space with the Wiener space of continuous functions from $[0,T]$ (or $\mathbbm{R}_+$) to $E$.
\end{remark}

\begin{definition}\label{Defp}
The function 
\begin{equation*}
    p:\begin{array}{rcl}
        (s,x,t,A) &\longmapsto& p(s,x,t,A)   \\ \relax
        [0,T]\times E\times[0,T]\times\mathcal{B}(E) &\longrightarrow& [0,1], 
    \end{array}
\end{equation*}
will be called \textbf{transition function} if, for any $s,t$ in $[0,T]$, 
$x_0\in E$,  $A\in \mathcal{B}(E)$,
it verifies
\begin{enumerate}
\item $x \mapsto p(s,x,t,A)$ is Borel,
\item $B \mapsto p(s,x_0,t,B)$ is a probability measure on $(E,\mathcal{B}(E))$,
\item if $t\leq s$ then $p(s,x_0,t,A)=\mathds{1}_{A}(x_0)$,
\item if $s<t$, for any $u>t$, $\int_{E} p(s,x_0,t,dy)p(t,y,u,A) = 
p(s,x_0,u,A)$.
\end{enumerate}
\end{definition}
The latter statement is the well-known \textbf{Chapman-Kolmogorov equation}.

\begin{definition}\label{DefFoncTrans}
A transition function $p$ for which  the first item is reinforced 
supposing that $(s,x)\longmapsto p(s,x,t,A)$ is Borel for any $t,A$,
 will be said  \textbf{measurable in time}.
\end{definition}

\begin{definition}\label{defMarkov}
A \textbf{Markov canonical class} associated to a transition function $p$ is a set of probability measures $(\mathbbm{P}^{s,x})_{(s,x)\in[0,T]\times E}$ defined on the measurable space 
$(\Omega,\mathcal{F})$ and verifying for any $t \in [0,T]$ and $A\in\mathcal{B}(E)$
\begin{equation}\label{Markov1}
\mathbbm{P}^{s,x}(X_t\in A)=p(s,x,t,A),
\end{equation}
and for any $s\leq t\leq u$
\begin{equation}\label{Markov2}
\mathbbm{P}^{s,x}(X_u\in A|\mathcal{F}_t)=p(t,X_t,u,A)\quad \mathbbm{P}^{s,x}\text{ a.s.}
\end{equation}
\end{definition}
\begin{remark}\label{Rfuturefiltration}
	Formula 1.7 in Chapter 6 of \cite{dynkin1982markov} states
	that for any $F\in \mathcal{F}_{t,T}$ yields
	\begin{equation}\label{Markov3}
	\mathbbm{P}^{s,x}(F|\mathcal{F}_t) = \mathbbm{P}^{t,X_t}(F)=\mathbbm{P}^{s,x}(F|X_t)\,\text{  }\,  \mathbbm{P}^{s,x} \text{a.s.}
	\end{equation}
	Property  \eqref{Markov3}  will  be called 
	\textbf{Markov property}.
\end{remark}
For the rest of this section, we are given a Markov canonical  class $(\mathbbm{P}^{s,x})_{(s,x)\in[0,T]\times E}$ with transition function $p$.
\\
We will complete the $\sigma$-fields ${\mathcal F}_t$ of the canonical filtration by $\mathbbm{P}^{s,x}$ as follows.
\begin{definition}\label{CompletedBasis}
For any $(s,x)\in[0,T]\times E$ we will consider the  $(s,x)$-\textbf{completion} $\left(\Omega,\mathcal{F}^{s,x},(\mathcal{F}^{s,x}_t)_{t\in[0,T]},\mathbbm{P}^{s,x}\right)$ of the stochastic basis $\left(\Omega,\mathcal{F},(\mathcal{F}_t)_{t\in[0,T]},\mathbbm{P}^{s,x}\right)$ by defining $\mathcal{F}^{s,x}$ as  the $\mathbbm{P}^{s,x}$-completion of $\mathcal{F}$, by extending $\mathbbm{P}^{s,x}$ to $\mathcal{F}^{s,x}$ and finally by defining  $\mathcal{F}^{s,x}_t$ as the $\mathbbm{P}^{s,x}$-closure of $\mathcal{F}_t$ (meaning $\mathcal{F}_t$ augmented with the $\mathbbm{P}^{s,x}$-negligible sets) for every $t\in[0,T]$. 
\end{definition}

We remark that, for any $(s,x)\in[0,T]\times E$, $\left(\Omega,\mathcal{F}^{s,x},(\mathcal{F}^{s,x}_t)_{t\in[0,T]},\mathbbm{P}^{s,x}\right)$ is a stochastic basis fulfilling the usual conditions, see (1.4) in chapter I of \cite{jacod}).
We recall that considering a conditional expectation with respect to a $\sigma$-field augmented with the negligible sets or not, does not change the result. In particular we have the following.
\begin{proposition}\label{ConditionalExp}
Let $(\mathbbm{P}^{s,x})_{(s,x)\in[0,T]\times E}$ be a Markov canonical class. Let $(s,x)\in[0,T]\times E$ be fixed, $Z$ be a random variable and $t\in[s,T]$, then 
\\
$\mathbbm{E}^{s,x}[Z|\mathcal{F}_t]=\mathbbm{E}^{s,x}[Z|\mathcal{F}^{s,x}_t]$ $\mathbbm{P}^{s,x}$ a.s., provided that the left-hand (or the right-hand) side
is well-defined.
\end{proposition}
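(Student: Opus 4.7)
The plan is to use the defining property of conditional expectation together with the fact that $\mathcal{F}^{s,x}_t$ differs from $\mathcal{F}_t$ only by $\mathbbm{P}^{s,x}$-negligible sets, so integrating against indicators of sets in either $\sigma$-field gives the same values. I will assume $Z$ is integrable (otherwise the statement is interpreted in the standard way for conditional expectations of possibly non-integrable random variables, and the same argument works componentwise on positive and negative parts).

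First, I would denote $U := \mathbbm{E}^{s,x}[Z \mid \mathcal{F}_t]$, which is by definition $\mathcal{F}_t$-measurable. Since $\mathcal{F}_t \subset \mathcal{F}^{s,x}_t$ by Definition \ref{CompletedBasis}, the random variable $U$ is also $\mathcal{F}^{s,x}_t$-measurable. To identify $U$ with $\mathbbm{E}^{s,x}[Z \mid \mathcal{F}^{s,x}_t]$, it suffices to check the integration property: for every $A \in \mathcal{F}^{s,x}_t$ one has $\mathbbm{E}^{s,x}[U \mathds{1}_A] = \mathbbm{E}^{s,x}[Z \mathds{1}_A]$, by uniqueness (up to $\mathbbm{P}^{s,x}$-a.s.~equality) of the conditional expectation.

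The main step, then, is the following. For any $A \in \mathcal{F}^{s,x}_t$, by construction of the $\mathbbm{P}^{s,x}$-closure, there exist $B \in \mathcal{F}_t$ and a $\mathbbm{P}^{s,x}$-negligible set $N \in \mathcal{F}^{s,x}$ such that the symmetric difference $A \triangle B \subset N$. Hence $\mathds{1}_A = \mathds{1}_B$ $\mathbbm{P}^{s,x}$-a.s., which yields
\begin{equation*}
\mathbbm{E}^{s,x}[U \mathds{1}_A] = \mathbbm{E}^{s,x}[U \mathds{1}_B] = \mathbbm{E}^{s,x}[Z \mathds{1}_B] = \mathbbm{E}^{s,x}[Z \mathds{1}_A],
\end{equation*}
the middle equality being the defining property of $U = \mathbbm{E}^{s,x}[Z \mid \mathcal{F}_t]$ applied to $B \in \mathcal{F}_t$. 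This verifies the characterizing identity, so by uniqueness $U = \mathbbm{E}^{s,x}[Z \mid \mathcal{F}^{s,x}_t]$ $\mathbbm{P}^{s,x}$-a.s.

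There is no real obstacle here; the only point to be careful about is the construction of the representative $B \in \mathcal{F}_t$ for a given $A \in \mathcal{F}^{s,x}_t$, which is a standard property of the completion of a $\sigma$-field by a class of negligible sets and follows for example from a monotone class argument showing that the collection of $A$'s admitting such a representative is itself a $\sigma$-field containing $\mathcal{F}_t$ and all $\mathbbm{P}^{s,x}$-null sets.
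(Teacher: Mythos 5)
Your argument is correct and is exactly the standard one; the paper in fact gives no proof of Proposition \ref{ConditionalExp}, merely recalling it as the well-known fact that conditioning on a $\sigma$-field augmented by negligible sets does not change the conditional expectation. Your verification via the characterizing identity, using a representative $B\in\mathcal{F}_t$ of each $A\in\mathcal{F}^{s,x}_t$ modulo a $\mathbbm{P}^{s,x}$-null set, fills in precisely the argument the authors leave implicit.
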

We state the following technical results of measurability without proofs. The interested reader can find complete proofs in \cite{paper2},
which are adapted from the time-homogeneous theory which the interested reader can find in \cite{dellmeyerD} for instance, see Proposition 10.a and Theorem 39 in its chapter XIV.

 \begin{proposition}\label{Borel}
 Let  $Z$ be a random variable. If the function $(s,x)\longmapsto \mathbbm{E}^{s,x}[Z]$ 
 is well-defined (with possible values in $[-\infty, \infty]$), then at fixed
 $s\in[0,T]$,
 $x\longmapsto \mathbbm{E}^{s,x}[Z]$ is Borel. 
 If moreover the transition function $p$ is measurable in time then, $(s,x)\longmapsto \mathbbm{E}^{s,x}[Z]$ is Borel.
 
In particular if $F\in \mathcal{F}$ be fixed, then at fixed
$s\in[0,T]$, $x\longmapsto \mathbbm{P}^{s,x}(F)$ is Borel. If 
 the transition function $p$ is measurable in time then, $(s,x)\longmapsto \mathbbm{P}^{s,x}(F)$ is Borel. 
 \end{proposition}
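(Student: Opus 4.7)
The plan is a monotone class argument. First I would establish the measurability for indicators of finite-dimensional cylinders via Chapman--Kolmogorov, then extend to all of $\mathcal{F}$ using Dynkin's $\pi$--$\lambda$ theorem, and finally bootstrap from indicators to general $Z$ by simple function approximation.

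\emph{Cylinders.} For $F=\{X_{t_1}\in A_1,\ldots,X_{t_n}\in A_n\}$ with $0\le t_1<\cdots<t_n\le T$ and $A_i\in\mathcal{B}(E)$, iterating the Markov property \eqref{Markov2} yields
\[
\mathbbm{P}^{s,x}(F)=\int_E p(s,x,t_1,dy_1)\mathds{1}_{A_1}(y_1)\int_E p(t_1,y_1,t_2,dy_2)\mathds{1}_{A_2}(y_2)\cdots\int_E p(t_{n-1},y_{n-1},t_n,dy_n)\mathds{1}_{A_n}(y_n),
\]
the convention $p(s,x,t,\cdot)=\delta_x$ for $t\le s$ from Definition \ref{Defp} absorbing any $t_i<s$. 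By Remark \ref{RDefFoncTrans}, integration against $p(s,x,t,dy)$ sends a Borel function of $y$ to a Borel function of $x$ at fixed $(s,t)$, and to a Borel function of $(s,x)$ jointly when $p$ is time-measurable. Unwinding the iterated integral from the innermost integrand outward and applying this fact at each step yields Borel measurability of $x\mapsto\mathbbm{P}^{s,x}(F)$ (respectively of $(s,x)\mapsto\mathbbm{P}^{s,x}(F)$ in the time-measurable case).

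\emph{Extension to $\mathcal{F}$, then to $Z$.} Let $\mathcal{L}$ be the family of $F\in\mathcal{F}$ for which the required measurability holds. Because $\mathbbm{P}^{s,x}(F^c)=1-\mathbbm{P}^{s,x}(F)$ and monotone convergence ensures stability under countable increasing unions, $\mathcal{L}$ is a $\lambda$-system. It contains the $\pi$-system of finite-dimensional cylinders, which generates $\mathcal{F}$, so Dynkin's theorem gives $\mathcal{L}=\mathcal{F}$, settling the ``in particular'' assertion. For a general $Z$ with $(s,x)\mapsto\mathbbm{E}^{s,x}[Z]$ well defined in $[-\infty,\infty]$, I would decompose $Z=Z^+-Z^-$, approximate each nonnegative part by an increasing sequence of nonnegative simple functions $Z_n$, and use linearity plus monotone convergence: $\mathbbm{E}^{s,x}[Z^\pm]$ is the pointwise supremum of the Borel maps $\mathbbm{E}^{s,x}[Z_n]$ and therefore Borel.

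The only subtle ingredient is ensuring that the iterated integral formula in the first step holds uniformly regardless of the relative positions of $s$ and the $t_i$'s; this is precisely what the convention of Definition \ref{Defp} buys, after which the rest of the argument is routine.
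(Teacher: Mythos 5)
Your proposal is correct and follows essentially the same route as the paper's proof: explicit computation of $\mathbbm{P}^{s,x}(F)$ for finite-dimensional cylinders via the Markov property and Remark \ref{RDefFoncTrans}, extension to all of $\mathcal{F}$ by the $\pi$--$\lambda$ theorem, and passage to general $Z$ by simple-function approximation, monotone convergence and the decomposition $Z=Z^+-Z^-$. The only cosmetic difference is that you absorb the indices with $t_i\le s$ into the iterated integral via the convention $p(s,x,t,\cdot)=\delta_x$, whereas the paper splits into cases according to which interval $[t_{i^*-1},t_{i^*}[$ contains $s$ and concatenates; these are equivalent.
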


\begin{lemma}\label{LemmaBorel} 
Assume that the transition function of the Markov canonical class is measurable in time.

Let 
$f\in\mathcal{B}([0,T]\times E,\mathbbm{R})$ be such that for every $(s,x)\in[0,T]\times E$,\\
 $\mathbbm{E}^{s,x}[\int_s^{T}|f(r,X_r)|dr]<\infty$. Then 
$(s,x)\longmapsto \mathbbm{E}^{s,x}[\int_s^{T}f(r,X_r)dr]$ is Borel.
\end{lemma}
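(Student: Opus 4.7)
My plan is to reduce the statement to Fubini–Tonelli sandwiched around a joint measurability claim. The assumption $\mathbbm{E}^{s,x}[\int_s^T |f(r,X_r)|\, dV_r]<\infty$ for every $(s,x)$, together with Tonelli applied to the positive integrand $|f|$, gives $\int_s^T \mathbbm{E}^{s,x}[|f(r,X_r)|]\, dV_r<\infty$. Hence Fubini yields
\begin{equation*}
\mathbbm{E}^{s,x}\!\left[\int_s^T f(r,X_r)\,dV_r\right]=\int_0^T \mathbbm{E}^{s,x}[f(r,X_r)]\,\mathds{1}_{[s,T]}(r)\,dV_r.
\end{equation*}
Since $(s,r)\mapsto \mathds{1}_{\{s\leq r\}}$ is Borel, it suffices to show that $F:(s,x,r)\mapsto \mathbbm{E}^{s,x}[f(r,X_r)]$ is jointly Borel on $[0,T]\times E\times [0,T]$; once this is established, the resulting integrand is jointly Borel and integrable in $r$ for every fixed $(s,x)$, so a second application of Fubini shows that the integral is Borel in $(s,x)$.

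The core step is thus the joint Borel measurability of $F$. I would prove this by a functional monotone class argument. Let $\mathcal{H}$ denote the set of bounded Borel functions $f:[0,T]\times E\rightarrow \mathbbm{R}$ such that $(s,x,r)\mapsto \mathbbm{E}^{s,x}[f(r,X_r)]$ is Borel. For product functions $f(r,y)=g(r)h(y)$ with $g\in\mathcal{B}_b([0,T],\mathbbm{R})$ and $h\in\mathcal{B}_b(E,\mathbbm{R})$, we have
\begin{equation*}
\mathbbm{E}^{s,x}[f(r,X_r)]=g(r)\,P_{s,r}[h](x),
\end{equation*}
and the quoted Proposition \ref{measurableint} (applied to the bounded function $h$) gives that $(s,x,r)\mapsto P_{s,r}[h](x)$ is Borel, so the product is Borel. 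The class $\mathcal{H}$ is clearly a linear space containing constants and is closed under bounded monotone increasing limits (by monotone convergence). Since finite products of Borel indicators generate the product $\sigma$-field $\mathcal{B}([0,T])\otimes\mathcal{B}(E)=\mathcal{B}([0,T]\times E)$, the functional monotone class theorem yields $\mathcal{H}=\mathcal{B}_b([0,T]\times E,\mathbbm{R})$.

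To extend from bounded to general Borel $f$, decompose $f=f^+-f^-$ and approximate each nonnegative piece by $f^{\pm}_n:=f^{\pm}\wedge n$. Each $F_n^{\pm}(s,x,r):=\mathbbm{E}^{s,x}[f_n^{\pm}(r,X_r)]$ is Borel by the previous paragraph, and $F_n^{\pm}\uparrow F^{\pm}$ pointwise by monotone convergence, so $F^{\pm}$ are Borel with values in $[0,\infty]$. The assumption on $f$ ensures that $F^{+}$ and $F^{-}$ are both $dV_r$-integrable on $[s,T]$ for every $(s,x)$, so $F=F^{+}-F^{-}$ is finite $dV_r$-a.e.\ and Borel, which is all we need.

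The only delicate point is the joint measurability in $(s,x,r)$ of $F$: pointwise measurability in $(s,x)$ (at fixed $r$) is immediate from Proposition \ref{Borel}, but that would not survive integration in $r$. The monotone class argument above, pivoting on Proposition \ref{measurableint} for joint measurability of the transition semigroup in $(s,x,r)$, is precisely what bridges that gap. Once joint measurability of $F$ is in hand, the two applications of Fubini–Tonelli bracketing it conclude the proof routinely.
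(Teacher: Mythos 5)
Your proof is correct, but it follows a genuinely different route from the paper's. You interchange $\mathbbm{E}^{s,x}$ with the $dV$-integral by Fubini, reduce the problem to joint Borel measurability of $(s,x,r)\mapsto\mathbbm{E}^{s,x}[f(r,X_r)]$, establish that by a functional monotone class argument seeded with product functions $g(r)h(y)$ and Proposition \ref{measurableint}, and integrate back out with Tonelli. The paper instead keeps the $dV$-integral inside the expectation: it truncates $f$ at level $n$, shows that $k^n(s,x,t):=\mathbbm{E}^{s,x}[\int_t^T((-n)\vee f\wedge n)(r,X_r)dV_r]$ is Borel in $(s,x)$ for fixed $t$ (by Proposition \ref{Borel} applied to the random variable $\int_t^T(\cdots)dV_r$) and continuous in $t$ for fixed $(s,x)$ (this is where the continuity of $V$ enters, via dominated convergence), deduces joint measurability from a Carath\'eodory-type lemma (Lemma 4.51 of the cited reference), composes with the diagonal $(s,x)\mapsto(s,x,s)$, and lets $n\to\infty$ by dominated convergence. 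Note that your monotone class step partly duplicates Proposition \ref{measurableint}, which already yields joint Borel measurability of $(s,x,t)\mapsto\mathbbm{E}^{s,x}[h(t,X_t)]$ for bounded Borel $h$ on $[0,T]\times E$; applying it directly to the truncations $(-n)\vee f\wedge n$ would shorten your argument. What your route buys: it never uses the continuity of $V$ (any finite Borel measure $dV$ on $[0,T]$ would do). What the paper's route buys: it avoids Fubini on the product space $[0,T]\times\Omega$ and the attendant need for a globally defined Borel version of $\mathbbm{E}^{s,x}[f(r,X_r)]$, i.e.\ the $\infty-\infty$ set in your $F^+-F^-$, which you do handle correctly but which is precisely the delicate point you flag at the end.
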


\begin{proposition}\label{measurableint}
  Let $f\in\mathcal{B}([0,T]\times E,\mathbbm{R})$ be such that for any $(s,x,t)$, $\mathbbm{E}^{s,x}[|f(t,X_t)|]<\infty$ then at fixed
  $s\in[0,T]$, $(x,t)\longmapsto \mathbbm{E}^{s,x}[f(t,X_t)]$ is Borel. If moreover the transition function $p$ is measurable in time, then 
\\
$(s,x,t)\longmapsto \mathbbm{E}^{s,x}[f(t,X_t)]$ is Borel.
\end{proposition}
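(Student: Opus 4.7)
The strategy is to represent $\mathbbm{E}^{s,x}[f(t,X_t)] = \int_{\Omega} f(t, X_t(\omega)) \, \mathbbm{P}^{s,x}(d\omega)$ and treat $\{\mathbbm{P}^{s,x}\}$ as a probability kernel whose measurability in the parameters is exactly what Proposition \ref{Borel} provides: in the measurable-in-time case, $(s,x) \mapsto \mathbbm{P}^{s,x}(F)$ is Borel for every $F \in \mathcal{F}$; at fixed $s$, $x \mapsto \mathbbm{P}^{s,x}(F)$ is Borel without the extra hypothesis. This unifies the two halves of the statement.

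First I would reduce to bounded $f$ by truncating $f_n := (-n) \vee f \wedge n$: the integrability hypothesis $\mathbbm{E}^{s,x}[|f(t, X_t)|] < \infty$ makes dominated convergence applicable, so $(s,x,t) \mapsto \mathbbm{E}^{s,x}[f(t, X_t)]$ is obtained as the pointwise limit of the analogous functions built from $f_n$, hence is Borel as soon as each bounded case is settled. For bounded $f$, I would run a Dynkin $\pi$-$\lambda$ argument on the integrand (exactly in the spirit of the proof of Proposition \ref{Borel}): let $\mathcal{H}$ denote the family of bounded Borel functions $\phi: [0,T] \times E \times [0,T] \times \Omega \to \mathbbm{R}$ for which $(s,x,t) \mapsto \int \phi(s,x,t,\omega) \, \mathbbm{P}^{s,x}(d\omega)$ is jointly Borel. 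The $\pi$-system consisting of products $\mathbf{1}_A(s,x) \mathbf{1}_C(t) \mathbf{1}_F(\omega)$ with $A \in \mathcal{B}([0,T] \times E)$, $C \in \mathcal{B}([0,T])$ and $F \in \mathcal{F}$ is contained in $\mathcal{H}$, since the integral collapses to $\mathbf{1}_A(s,x) \mathbf{1}_C(t) \mathbbm{P}^{s,x}(F)$, which is Borel by Proposition \ref{Borel}. Standard closure of $\mathcal{H}$ under linear combinations and bounded monotone limits then extends the conclusion to all bounded Borel functions on the product.

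Applying the result to $\phi(s, x, t, \omega) := f_n(t, X_t(\omega))$, which is jointly Borel because $f_n$ is Borel and the canonical process $(t, \omega) \mapsto X_t(\omega)$ is Borel by Notation \ref{canonicalspace}, yields the joint Borel measurability of $(s,x,t) \mapsto \mathbbm{E}^{s,x}[f_n(t, X_t)]$ and hence of the limit. The first (fixed-$s$) assertion is obtained by the same argument on the smaller product $E \times [0,T] \times \Omega$, using only the weaker Borel dependence in $x$ of the kernel. The only real obstacle is the bookkeeping of the monotone class over a four-variable product space; notably, no analytic input such as path regularity or continuity of $t \mapsto p(s,x,t,B)$ is needed, which is what makes the proof substantially shorter than the proof of Lemma \ref{LemmaBorel}.
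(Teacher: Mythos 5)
Your proof is correct, but it takes a genuinely different route from the paper's. The paper first treats $f\in\mathcal{C}_b([0,T]\times E,\mathbbm{R})$: it uses the cadlag paths of the canonical process to show that $t\longmapsto \mathbbm{E}^{s,x}[f(t,X_t)]$ is cadlag, combines this with the Borel dependence on $(s,x)$ at fixed $t$ from Proposition \ref{Borel}, and invokes Theorem 15, Chapter IV of \cite{dellmeyer75} (separate right-continuity in one variable plus measurability in the other implies joint measurability); it then passes to $\mathcal{B}_b([0,T]\times E,\mathbbm{R})$ by a functional monotone class argument on the class of integrands $f$, and finally truncates. You instead run the monotone class argument one level down, on the product space $[0,T]\times E\times[0,T]\times\Omega$, starting from indicators of rectangles $A\times C\times F$, for which the integral collapses to $\mathds{1}_A(s,x)\mathds{1}_C(t)\mathbbm{P}^{s,x}(F)$ and Proposition \ref{Borel} applies directly; the function $(s,x,t,\omega)\longmapsto f_n(t,X_t(\omega))$ is product-measurable because the canonical process is jointly measurable by construction (Notation \ref{canonicalspace}), and truncation handles the integrable case. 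This is the standard ``measurable kernel'' argument: it buys you independence from any path regularity of $X$ and from the Dellacherie--Meyer joint-measurability theorem, and would work verbatim for any measurable (not necessarily cadlag) process, at the cost of a slightly heavier monotone class over a four-variable product. The fixed-$s$ half goes through exactly as you say, using only the weaker conclusion of Proposition \ref{Borel}. Both arguments are complete; yours is arguably the more robust of the two.
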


\section{Technicalities concerning homogeneous Markov classes and martingale problems}\label{SC}

We start by introducing homogeneous Markov classes. In this section, we are given a Polish space $E$ and some $T\in\mathbbm{R}^*$.
\begin{notation}\label{HomogeneNonHomogene}
	A mapping $\tilde{p}:E\times[0,T]\times\mathcal{B}(E)$ will be called a \textbf{homogeneous transition function} if 
	$p:(s,x,t,A)\longmapsto \tilde{p}(x,t-s,A)\mathds{1}_{s<t}+\mathds{1}_A(x)\mathds{1}_{s\geq t}$ is a transition function in the sense of Definition \ref{Defp}. This in particular implies $\tilde{p}=p(0,\cdot,\cdot,\cdot)$.
	\\
	A set of probability measures $(\mathbbm{P}^x)_{x\in E}$ on the canonical space associated to $T$ and $E$ (see Notation \ref{canonicalspace}) will be called a \textbf{homogeneous Markov class} associated to a homogeneous transition function $\tilde{p}$ if 
\begin{equation}
\left\{\begin{array}{l}
 \forall t\in[0,T]\quad \forall A\in\mathcal{B}(E)\quad ,\mathbbm{P}^{x}(X_t\in A)=\tilde{p}(x,t,A)\\
 \forall 0\leq t\leq u\leq T\quad , \mathbbm{P}^x(X_u\in A|\mathcal{F}_t)=
\tilde p(X_t,u-t,A)\quad \mathbbm{P}^{s,x}  \text{a.s.}
\end{array}\right.
\end{equation}	
Given a homogeneous Markov class $(\mathbbm{P}^x)_{x\in E}$ associated to a homogeneous transition function $\tilde{p}$, one can always consider the Markov class $(\mathbbm{P}^{s,x})_{(s,x)\in[0,T]\times E}$ associated to the transition function 
	$p:(s,x,t,A)\longmapsto \tilde{p}(x,t-s,A)\mathds{1}_{s<t}+\mathds{1}_A(x)\mathds{1}_{s\geq t}$. 
In particular, for any $x\in E$, we have $\mathbbm{P}^{0,x}=\mathbbm{P}^x$.

\end{notation}
We show that a homogeneous transition function necessarily produces a measurable in time non homogeneous transition function.

\begin{proposition}\label{HomoMeasurableint}
Let $\tilde{p}$ be a homogeneous transition function and let $p$ be the associated non homogeneous transition function as described in Notation \ref{HomogeneNonHomogene}. Then $p$ is measurable in time in the sense of Definition  \ref{DefFoncTrans}.

\end{proposition}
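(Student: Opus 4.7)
The plan is to reduce to showing that the map $(u,x) \mapsto \tilde p(x,u,A)$ is jointly Borel on $[0,T] \times E$ for each fixed $A \in \mathcal{B}(E)$. Indeed, fixing $t \in [0,T]$ and $A$, the decomposition
\[
p(s,x,t,A) = \tilde p(x, t-s, A)\mathds{1}_{\{s<t\}} + \mathds{1}_A(x)\mathds{1}_{\{s \geq t\}}
\]
expresses the desired quantity as a sum of two terms in $(s,x)$. The second is trivially jointly Borel, and the first becomes jointly Borel by composition with the continuous map $(s,x) \mapsto (t-s, x)$ once joint measurability of $(u,x) \mapsto \tilde p(x,u,A)$ has been established.

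To establish this, I would work with the associated homogeneous Markov class $(\mathbbm{P}^x)_{x \in E}$ on the cadlag canonical space, identifying $\tilde p(x,u,A) = \mathbbm{E}^x[\mathds{1}_A(X_u)]$. For $f \in \mathcal{C}_b(E,\mathbbm{R})$, since the canonical process $r \mapsto X_r$ is cadlag under $\mathbbm{P}^x$, the path $u \mapsto f(X_u)$ is cadlag and bounded, so by dominated convergence the function $u \mapsto \mathbbm{E}^x[f(X_u)]$ is cadlag for each fixed $x$. Simultaneously, for each fixed $u$ the map $x \mapsto \mathbbm{E}^x[f(X_u)]$ is Borel by Proposition \ref{Borel} (applied at $s=0$, using $\mathbbm{P}^{0,x}=\mathbbm{P}^x$). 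Invoking Theorem 15 of Chapter IV of \cite{dellmeyer75}, the combination of Borel-in-$x$ and right-continuity in $u$ yields joint Borel measurability of $(u,x) \mapsto \mathbbm{E}^x[f(X_u)]$ on $[0,T] \times E$. This is exactly the same ingredient that was used in the proof of Proposition \ref{measurableint}.

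The last step is to extend from continuous bounded $f$ to indicators $\mathds{1}_A$ via the functional monotone class argument already exploited in the proof of Proposition \ref{measurableint}: the class $\mathcal{I}$ of bounded Borel $f$ for which $(u,x) \mapsto \mathbbm{E}^x[f(X_u)]$ is jointly Borel is a linear space stable under uniform and bounded monotone convergence (joint Borel measurability being preserved by pointwise limits); it contains the multiplicative class $\mathcal{C}_b(E,\mathbbm{R})$ which generates $\mathcal{B}(E)$, hence $\mathcal{I}$ contains every bounded Borel function, in particular $\mathds{1}_A$ for every $A \in \mathcal{B}(E)$. This delivers the joint Borel measurability of $(u,x) \mapsto \tilde p(x,u,A)$ and closes the argument.

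The main obstacle is handling the time variable $u$: individual $x$-measurability of $\tilde p(x,u,A)$ is built into the definition of a transition function, but upgrading this to joint measurability in $(u,x)$ requires some regularity in $u$, which I extract from the cadlag structure of the canonical space via bounded convergence. Once this is in hand, the monotone class extension and the final composition are routine.
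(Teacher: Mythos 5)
Your proof is correct and follows essentially the same route as the paper: reduce to joint Borel measurability of $(u,x)\mapsto\tilde p(x,u,A)=\mathbbm{E}^{0,x}[\mathds{1}_A(X_u)]$ and then compose with $(s,x)\mapsto(t-s,x)$. The only difference is that you re-derive the needed measurability (cadlag in time plus Borel in $x$, then a monotone class extension) instead of simply invoking the fixed-$s$ part of Proposition \ref{measurableint} with $f=\mathds{1}_A$, which is what the paper does.
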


\begin{proof}
Given that $p:(s,x,t,A)\longmapsto \tilde{p}(x,t-s,A)\mathds{1}_{s<t}+\mathds{1}_A(x)\mathds{1}_{s\geq t}$, it is actually enough to show that $\tilde{p}(\cdot,\cdot,A)$ is Borel for any $A\in\mathcal{B}(E)$. We can also write $ \tilde{p}=p(0,\cdot,\cdot,\cdot)$, so $p$ is measurable in time if $p(0,\cdot,\cdot,A)$ is Borel for any $A\in\mathcal{B}(E)$, and this holds thanks to Proposition \ref{measurableint} applied to $f:=\mathds{1}_A$ .
\end{proof}

We then introduce below the notion of  homogeneous martingale problems.
 \begin{definition}\label{MPhomogene}
 	 Given $A$ an operator mapping a linear algebra  $\mathcal{D}(A)\subset \mathcal{B}_b(E,\mathbbm{R})$ into
         $\mathcal{B}_b(E,\mathbbm{R})$, we say that a set of probability
         measures $(\mathbbm{P}^x)_{x\in E}$ on the measurable space $(\Omega,\mathcal{F})$
   (see Notation \ref{canonicalspace}) solves the
   \textbf{homogeneous Martingale Problem} associated to $(\mathcal{D}(A),A)$ if for any $x\in E$, $\mathbbm{P}^x$ satisfies
 	\begin{itemize}
 		\item for every $\phi\in\mathcal{D}(A)$, $\phi(X_{\cdot})-\int_0^{\cdot}A\phi(X_r)dr$ is a $(\mathbbm{P}^x,(\mathcal{F}_t)_{t\in[0,T]})$-local
 martingale;
\item $\mathbbm{P}^x(X_0=x)=1$.
 	\end{itemize}
 We say that this {\bf homogeneous Martingale Problem is well-posed} if for any $x\in E$, $\mathbbm{P}^x$ is the only probability measure on $(\Omega,\mathcal{F})$ verifying those two items.
 \end{definition}

\begin{remark}
	 If $(\mathbbm{P}^x)_{x\in E}$ is a homogeneous Markov class solving the homogeneous Martingale Problem associated to some $(\mathcal{D}(A),A)$, then the corresponding $(\mathbbm{P}^{s,x})_{(s,x)\in[0,T]\times E}$ (see Notation \ref{HomogeneNonHomogene}) solves the Martingale Problem associated to $(\mathcal{D}(A),A)$ in the sense of Definition \ref{MartingaleProblem}. Moreover if the homogeneous Martingale Problem is well-posed, so is the latter one.
\end{remark}
So a homogeneous Markov process solving a homogeneous martingale problem falls
 into our setup. We will now see how we can pass from an operator $A$ which only
 acts on time-independent functions to an evolution operator $\partial_t+A$, 
and see how our Markov class still solves the corresponding martingale problem.

\begin{notation}\label{NotDomain}
	Let $E$ be a Polish space and let $A$ be an operator mapping a linear algebra  $\mathcal{D}(A)\subset \mathcal{B}_b(E,\mathbbm{R})$ into $\mathcal{B}_b(E,\mathbbm{R})$.
	\\
	If $\phi\in\mathcal{B}([0,T]\times E,\mathbbm{R})$ is such that for every $t\in[0,T]$, $\phi(t,\cdot)\in\mathcal{D}(A)$, then  $A\phi$ will denote the
 mapping $(t,x)\longmapsto A(\phi(t,\cdot))(x)$.
	\\
	\\
	We now introduce the time-inhomogeneous domain associated to $A$ which we denote $\mathcal{D}^{max}(\partial_t+A)$ and which consists in functions $\phi\in\mathcal{B}_b([0,T]\times E,\mathbbm{R})$ verifying the following conditions:
	\begin{itemize}
		\item $\forall x\in E$, $\phi(\cdot,x)\in\mathcal{C}^1([0,T],\mathbbm{R})$ and $\forall t\in[0,T]$, $\phi(t,\cdot)\in\mathcal{D}(A)$;
		\item $\forall t\in[0,T]$, $\partial_t \phi(t,\cdot)\in\mathcal{D}(A)$ and $\forall x\in E$, $A\phi(\cdot,x)\in\mathcal{C}^1([0,T],\mathbbm{R})$;
		\item $\partial_t\circ A\phi=A\circ\partial_t\phi$;
		\item $\partial_t\phi$, $A\phi$ and $\partial_t\circ A\phi$ belong to $\mathcal{B}_b([0,T]\times E,\mathbbm{R})$.
	\end{itemize}
	On $\mathcal{D}^{max}(\partial_t+A)$ we will consider the operator $\partial_t+A$.
\end{notation}

\begin{remark}\label{RemDomain}
	With these notations, it is clear that $\mathcal{D}^{max}(\partial_t+A)$ is a sub-linear space of $\mathcal{B}_b([0,T]\times E,\mathbbm{R})$. It is in general not a linear algebra, but always  contains $\mathcal{D}(A)$, and even $\mathcal{C}^1([0,T],\mathbbm{R})\otimes\mathcal{D}(A)$, the linear algebra of functions which can be written $\underset{k\leq N}{\sum}\lambda_k\psi_k\phi_k$ where $N\in\mathbbm{N}$, and for any $k$, $\lambda_k\in\mathbbm{R}$, $\psi_k\in\mathcal{C}^1([0,T],\mathbbm{R})$, $\phi_k\in\mathcal{D}(A)$.	We also notice that $\partial_t+A$ maps $\mathcal{D}^{max}(\partial_t+A)$ into $\mathcal{B}_b([0,T]\times E,\mathbbm{R})$.
\end{remark}

\begin{lemma}\label{LemDomain}
	Let us consider the same notations and under the same assumptions as 
	in Notation \ref{NotDomain}. Let $(\mathbbm{P}^{s,x})_{(s,x)\in[0,T]\times E}$ be a Markov class solving the well-posed Martingale Problem associated to $(A,\mathcal{D}(A))$ in the sense of Definition \ref{MartingaleProblem}. Then it also solves the well-posed martingale problem associated to
	$(\partial_t+A,\mathcal{A})$ for any linear algebra $\mathcal{A}$ included in $\mathcal{D}^{max}(\partial_t+A)$.
\end{lemma}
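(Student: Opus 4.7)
The lemma has two parts: showing that $(\mathbbm{P}^{s,x})$ solves the martingale problem on $(\partial_t+A,\mathcal{A},V_t\equiv t)$ (existence), and that this enlarged problem is well-posed (uniqueness). The uniqueness part is essentially immediate: since $\mathcal{D}(A)$ embeds into $\mathcal{D}^{max}(\partial_t+A)$ as time-constant functions (Remark \ref{RemDomain}), any probability measure solving the new martingale problem, restricted to those embedded elements (which lie in any $\mathcal{A}$ for which the statement gives nontrivial information), also solves the original $(\mathcal{D}(A),A,V_t\equiv t)$-problem, hence coincides with $\mathbbm{P}^{s,x}$ by hypothesis. The real content is the existence claim, which I now sketch.

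Fix $(s,x)\in[0,T]\times E$ and $\phi\in\mathcal{A}\subset \mathcal{D}^{max}(\partial_t+A)$. The boundedness of $\phi$, $\partial_t\phi$ and $A\phi$ makes
\[
N_\tau \;:=\; \phi(\tau,X_\tau)-\phi(s,X_s)-\int_s^{\tau}(\partial_r\phi+A\phi)(r,X_r)\,dr, \qquad \tau\in[s,T],
\]
uniformly bounded, so it suffices to show $N$ is a true martingale. By the Markov property (Remark \ref{Rfuturefiltration}) this reduces to the unconditional identity
\[
\mathbbm{E}^{s,x}[\phi(\tau,X_\tau)] \;=\; \phi(s,x)+\int_s^{\tau}\mathbbm{E}^{s,x}[(\partial_r\phi+A\phi)(r,X_r)]\,dr, \qquad \tau\in[s,T].
\]
I would study the two-variable function $F(u,r):=\mathbbm{E}^{s,x}[\phi(u,X_r)]$ via two partial-increment formulas. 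First, at fixed $u$, $\phi(u,\cdot)\in\mathcal{D}(A)$ and $A\phi$ is bounded, so the local martingale provided by the original martingale problem is a true martingale, yielding
\[
F(u,r_2)-F(u,r_1) \;=\; \int_{r_1}^{r_2}\mathbbm{E}^{s,x}[A\phi(u,X_v)]\,dv. \qquad (\ast)
\]
Second, the pathwise $\mathcal{C}^1$-regularity of $\phi(\cdot,y)$ and boundedness of $\partial_u\phi$, combined with Fubini, give
\[
F(u_2,r)-F(u_1,r) \;=\; \int_{u_1}^{u_2}\mathbbm{E}^{s,x}[\partial_u\phi(u,X_r)]\,du. \qquad (\ast\ast)
\]

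Telescoping along a partition $s=t_0<\cdots<t_n=\tau$, and splitting each $F(t_{k+1},t_{k+1})-F(t_k,t_k)$ through the corner $F(t_k,t_{k+1})$, one obtains
\[
F(\tau,\tau)-F(s,s) \;=\; \sum_{k}\int_{t_k}^{t_{k+1}}\mathbbm{E}^{s,x}[\partial_u\phi(u,X_{t_{k+1}})]\,du \;+\; \sum_{k}\int_{t_k}^{t_{k+1}}\mathbbm{E}^{s,x}[A\phi(t_k,X_v)]\,dv.
\]
The main technical step is the passage to the limit as the mesh tends to zero. Applying $(\ast)$ to the function $\partial_u\phi(u,\cdot)\in\mathcal{D}(A)$ (second item of Notation \ref{NotDomain}) and using boundedness of $A\partial_u\phi=\partial_u A\phi$ gives the uniform estimate
\[
\bigl|\mathbbm{E}^{s,x}[\partial_u\phi(u,X_{t_{k+1}})]-\mathbbm{E}^{s,x}[\partial_u\phi(u,X_u)]\bigr| \;\leq\; \|A\partial_u\phi\|_\infty\cdot(t_{k+1}-u),
\]
while pathwise $\mathcal{C}^1$-regularity of $A\phi(\cdot,y)$ yields
\[
\bigl|A\phi(t_k,X_v)-A\phi(v,X_v)\bigr| \;\leq\; \|\partial_u A\phi\|_\infty\cdot(v-t_k).
\]
Both constants are finite precisely because of the commutation $A\circ\partial_t=\partial_t\circ A$ together with the boundedness of $\partial_t A\phi$ built into $\mathcal{D}^{max}(\partial_t+A)$. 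The corresponding Riemann-sum errors are bounded by $C\cdot\mathrm{mesh}\cdot(\tau-s)$, so the two sums converge to $\int_s^\tau\mathbbm{E}^{s,x}[\partial_u\phi(u,X_u)]\,du$ and $\int_s^\tau\mathbbm{E}^{s,x}[A\phi(v,X_v)]\,dv$ respectively. Adding them, and noting $F(s,s)=\phi(s,x)$ since $X_s=x$ $\mathbbm{P}^{s,x}$-a.s., one recovers the required identity. The main obstacle is this limit step, which is exactly what forces the full stability package---the commutation and all four boundedness conditions---encoded in the definition of $\mathcal{D}^{max}(\partial_t+A)$.
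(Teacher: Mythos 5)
Your argument is correct, but it takes a genuinely different route from the paper's. The paper also splits the increment through a ``corner'', writing $\phi(u,X_u)-\phi(t,X_t)=(\phi(u,X_t)-\phi(t,X_t))+(\phi(u,X_u)-\phi(u,X_t))$, but it does so in a single step under the conditional expectation: the residual terms $I_1=\mathbbm{E}^{s,x}[\int_t^u(\partial_t\phi(r,X_r)-\partial_t\phi(r,X_t))dr|\mathcal{F}_t]$ and $I_2=\mathbbm{E}^{s,x}[\int_t^u(A\phi(u,X_r)-A\phi(r,X_r))dr|\mathcal{F}_t]$ are each rewritten as iterated double integrals --- via the martingale problem applied to $\partial_t\phi(r,\cdot)$ for $I_1$, and via the time-$\mathcal{C}^1$ regularity of $A\phi(\cdot,y)$ for $I_2$ --- and identified with one another using $\partial_t\circ A\phi=A\circ\partial_t\phi$ and Fubini for conditional expectations; no partition or limit is needed. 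You instead first reduce to an unconditional identity through the Markov property (Remark \ref{Rfuturefiltration}, applied at all starting points $(t,y)$, with Proposition \ref{Borel} supplying measurability), then telescope the two-parameter function $F(u,r)=\mathbbm{E}^{s,x}[\phi(u,X_r)]$ along a partition and control the Riemann-sum errors by $O(\mathrm{mesh})$ using the boundedness of $A\partial_t\phi$ and $\partial_t A\phi$. Your estimates are right, and the ingredients you invoke are exactly those packaged into $\mathcal{D}^{max}(\partial_t+A)$; where the paper uses the commutation structurally to equate $I_1$ with $I_2$, you use it only to guarantee that both $A\partial_t\phi$ and $\partial_tA\phi$ are bounded, which makes your argument slightly more mechanical but requires the extra limiting step the paper avoids. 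One caveat applies equally to your proof and to the paper's: the well-posedness assertion for $(\partial_t+A,\mathcal{A},V_t\equiv t)$ is only justified when $\mathcal{A}$ contains enough functions (e.g.\ the time-independent elements of $\mathcal{D}(A)$) to force uniqueness back onto the original problem; your parenthetical acknowledgment of this is no weaker than the paper, whose proof is silent on uniqueness altogether.
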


\begin{proof}
	We start by noticing that since $\mathcal{D}(A)\subset \mathcal{B}_b(E,\mathbbm{R})$ and is mapped into  $\mathcal{B}_b(E,\mathbbm{R})$, then for any $(s,x)\in[0,T]\times E$ and $\phi\in\mathcal{D}(A)$, $M^{s,x}[\phi]$ is bounded and is therefore a martingale. 
	\\
	\\
	We fix $(s,x)\in[0,T]\times E$, $\phi\in\mathcal{D}^{max}(\partial_t+A)$ and $s\leq t\leq u\leq T$ and we will show that
	\begin{equation}\label{eqDomain}
	\mathbbm{E}^{s,x}\left[\phi(u,X_u)-\phi(t,X_t)-\int_t^u(\partial_t+A)\phi(r,X_r)dr\middle|\mathcal{F}_t\right]=0,
	\end{equation}
which implies	 that $\phi(\cdot,X_{\cdot})-\int_s^{\cdot}(\partial_t+A)\phi(r,X_r)dr$, $t\in[s,T]$ is a $\mathbbm{P}^{s,x}$-martingale.
	We have 
	\begin{equation*}
	\begin{array}{rcl}
	&&\mathbbm{E}^{s,x}[\phi(u,X_u)-\phi(t,X_t)|\mathcal{F}_t]\\
	&=&\mathbbm{E}^{s,x}[(\phi(u,X_t)-\phi(t,X_t))+(\phi(u,X_u)-\phi(u,X_t))|\mathcal{F}_t]\\
	&=&\mathbbm{E}^{s,x}\left[\int_t^u\partial_t\phi(r,X_t)dr+\left(\int_t^uA\phi(u,X_r)dr+(M^{s,x}[\phi(u,\cdot)]_u-M^{s,x}[\phi(u,\cdot)]_t)\right)|\mathcal{F}_t\right]\\
	&=&\mathbbm{E}^{s,x}\left[\int_t^u\partial_t\phi(r,X_t)dr+\int_t^uA\phi(u,X_r)dr|\mathcal{F}_t\right] \\
&=& I_0 - I_1 + I_2,
	\end{array}
	\end{equation*} 
        where
        \begin{eqnarray*}
          I_0  &=& \mathbbm{E}^{s,x}\left[\int_t^u\partial_t\phi(r,X_r)dr+\int_t^uA\phi(r,X_r)dr|\mathcal{F}_t\right]\\
          I_1 &=&\mathbbm{E}^{s,x}\left[\int_t^u(\partial_t\phi(r,X_r)-\partial_t\phi(r,X_t))dr|\mathcal{F}_t\right]\\
          I_2 &=&\mathbbm{E}^{s,x}\left[\int_t^u(A\phi(u,X_r)-A\phi(r,X_r))dr|\mathcal{F}_t\right].
\end{eqnarray*}
                  \eqref{eqDomain} will be established 
if  one proves that $I_1 = I_2$. We do this below. 

At fixed
$r$ and $\omega$, $v\longmapsto A\phi(v,X_r(\omega))$ is $\mathcal{C}^1$, therefore 
	$A\phi(u,X_r(\omega))-A\phi(r,X_r(\omega))=\int_r^u\partial_tA\phi(v,X_r(\omega))dv$ and 
	$I_2= \mathbbm{E}^{s,x}\left[\int_t^u\int_r^u\partial_tA\phi(v,X_r)dvdr|\mathcal{F}_t\right]$. Then
\begin{eqnarray*}
        I_1&=&\mathbbm{E}^{s,x}\left[\int_t^u\int_t^rA\partial_t\phi(r,X_v)dvdr|\mathcal{F}_t\right]\\ &+&\mathbbm{E}^{s,x}
               \left[\int_t^u(M^{s,x}[\partial_t\phi(r,\cdot)]_r
  - M^{s,x}[\partial_t\phi(r,\cdot)]_t)dr|\mathcal{F}_t \right].\end{eqnarray*}
	Since $\partial_t\phi$ and $A\partial_t\phi$ are bounded, $M^{s,x}[\partial_t\phi(r,\cdot)]_r(\omega)$ is uniformly bounded in $(r,\omega)$, so by Fubini's theorem for conditional expectations we have
	\begin{equation}
	\begin{array}{rcl}
	&&\mathbbm{E}^{s,x}[\int_t^u(M^{s,x}[\partial_t\phi(r,\cdot)]_r-M^{s,x}[\partial_t\phi(r,\cdot)]_t)dr|\mathcal{F}_t]\\
	&=&\int_t^u\mathbbm{E}^{s,x}[M^{s,x}[\partial_t\phi(r,\cdot)]_r-M^{s,x}[\partial_t\phi(r,\cdot)]_t|\mathcal{F}_t]dr\\
	&=&0.
	\end{array}
	\end{equation}  
	Finally since $\partial_tA\phi=A\partial_t\phi$ and again  by Fubini's theorem for conditional expectations, we have $\mathbbm{E}^{s,x}\left[\int_t^u\int_r^u\partial_tA\phi(v,X_r)dvdr|\mathcal{F}_t\right]=\mathbbm{E}^{s,x}\left[\int_t^u\int_t^rA\partial_t\phi(r,X_v)dvdr|\mathcal{F}_t\right]$ so $I_1=I_2$ which concludes the proof.
\end{proof}

In conclusion we can state the following.
\begin{corollary}\label{conclusionA4}
Given a homogeneous Markov class $(\mathbbm{P}^x)_{x\in E}$ solving a well-posed homogeneous Martingale Problem associated to some $(\mathcal{D}(A),A)$, 
there exists  a Markov class $(\mathbbm{P}^{s,x})_{(s,x)\in[0,T]\times E}$ which transition function is measurable in time and  such that for any algebra $\mathcal{A}$ included in $\mathcal{D}^{max}(\partial_t+A)$,  $(\mathbbm{P}^{s,x})_{(s,x)\in[0,T]\times E}$ solves the well-posed Martingale Problem associated to $(\partial_t+A,\mathcal{A})$ in the sense of Definition \ref{MartingaleProblem}.
\end{corollary}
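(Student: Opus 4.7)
The corollary is an assembly of three results already in the paper. Starting from the homogeneous class $(\mathbbm{P}^x)_{x\in E}$ with homogeneous transition function $\tilde p$, I first invoke Notation \ref{HomogeneNonHomogene} to define the inhomogeneous Markov class $(\mathbbm{P}^{s,x})_{(s,x)\in[0,T]\times E}$ whose transition function is
\[
p(s,x,t,A)\;=\;\tilde p(x,t-s,A)\mathds{1}_{s<t}+\mathds{1}_A(x)\mathds{1}_{s\geq t},
\]
so that $\mathbbm{P}^{0,x}=\mathbbm{P}^x$ for every $x$. Proposition \ref{HomoMeasurableint} then asserts directly that $p$ is measurable in time in the sense of Definition \ref{DefFoncTrans}, giving the measurability part of the statement for free.

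Next I claim that $(\mathbbm{P}^{s,x})_{(s,x)\in[0,T]\times E}$ solves the well-posed Martingale Problem associated to $(\mathcal{D}(A),A,V_t\equiv t)$ in the sense of Definition \ref{MartingaleProblem}. This is the content of the remark following Definition \ref{MPhomogene}: time-translating by $s$ the fact that, under $\mathbbm{P}^x$, the process $\phi(X_{\cdot})-\int_0^{\cdot}A\phi(X_r)dr$ is a local martingale for every $\phi\in\mathcal{D}(A)$, and using that $\mathbbm{P}^{s,x}$ is supported on paths frozen at $x$ on $[0,s]$, one obtains that under $\mathbbm{P}^{s,x}$ the process $\mathds{1}_{[s,T]}\bigl(\phi(X_{\cdot})-\phi(s,x)-\int_s^{\cdot}A\phi(X_r)dr\bigr)$ is a local martingale, which is condition (b) of Definition \ref{MartingaleProblem}. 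Condition (a) holds by construction of $\mathbbm{P}^{s,x}$. Well-posedness is inherited from the homogeneous side by reversing the same time-translation argument: any alternative solution $\mathbbm{Q}^{s,x}$ would, after re-translation and conditioning, yield a second solution of the homogeneous Martingale Problem for $(\mathcal{D}(A),A)$ and must therefore coincide with $\mathbbm{P}^x$, hence with $\mathbbm{P}^{s,x}$.

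The last step is a direct application of Lemma \ref{LemDomain} with the Martingale Problem $(\mathcal{D}(A),A,V_t\equiv t)$ just obtained as input: this lemma extends the martingale property from $\mathcal{D}(A)$ to any linear algebra $\mathcal{A}\subset \mathcal{D}^{max}(\partial_t+A)$, yielding that $(\mathbbm{P}^{s,x})$ solves the well-posed Martingale Problem associated to $(\partial_t+A,\mathcal{A},V_t\equiv t)$, which is the conclusion.

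No step of this argument is a serious obstacle; the only delicate verification is the commutation identity $\partial_t\circ A\phi=A\circ\partial_t\phi$ together with the boundedness needed to apply Fubini to conditional expectations, but this is precisely the content of the definition of $\mathcal{D}^{max}(\partial_t+A)$ in Notation \ref{NotDomain} and is already handled inside the proof of Lemma \ref{LemDomain}. Hence Corollary \ref{conclusionA4} follows by assembly.
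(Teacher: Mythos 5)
Your proposal is correct and follows exactly the route the paper intends: the corollary is stated as a direct assembly of Notation \ref{HomogeneNonHomogene}, Proposition \ref{HomoMeasurableint}, the remark following Definition \ref{MPhomogene}, and Lemma \ref{LemDomain}, and your argument invokes precisely these four ingredients in the same order. The extra detail you supply on the time-translation behind the remark is consistent with the paper and introduces no gap.
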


\end{appendices}
\section*{Acknowledgments}

The authors are grateful to the Referee for the careful reading
and the stimulating comments.
The work of the second named author
was partially supported by a public grant as part of the
{\it Investissement d'avenir project, reference ANR-11-LABX-0056-LMH,
  LabEx LMH,}
in a joint call with Gaspard Monge Program for optimization, operations research and their interactions with data sciences.

\bibliographystyle{plain}
\bibliography{../../../biblioPhDBarrasso_bib/biblioPhDBarrasso}
\end{document}